\crefname{equation}{}{}
\crefname{lem}{Lemma}{Lemmas}
\crefname{thm}{Theorem}{Theorems}
\DeclareMathOperator{\D}{D}
\newcommand{\dual}[1]{\left\langle {#1} \right\rangle}
\newcommand{\nm}[1]{\left\lVert {#1} \right\rVert}
\newcommand{\snm}[1]{\left\lvert {#1} \right\rvert}
\newcommand{\ssnm}[1]
{
  \left\vert\kern-0.25ex
  \left\vert\kern-0.25ex
  \left\vert
  {#1}
  \right\vert\kern-0.25ex
  \right\vert\kern-0.25ex
  \right\vert
}
\def\spher@harm#1{%
  \vbox{\hbox{%
    \offinterlineskip
    \valign{&\hb@xt@2\p@{\hss$##$\hss}\vskip.2ex\cr#1\crcr}%
  }\vskip-.36ex}%
}
\def\gshone{\spher@harm{.}}
\def\gshtwo{\spher@harm{.&.}}
\def\gshthree{\spher@harm{.&.&.}}
\let\gsh\spher@harm
\newtheorem{Def}{Definition}[section]
\newtheorem{lem}{Lemma}[section]
\newtheorem{rem}{Remark}[section]
\newtheorem{thm}{Theorem}[section]
\def\@captype{table}\makeatother
\begin{document}

\title{
  \Large \bf Convergence analysis of a Petrov-Galerkin
  method for fractional wave problems with nonsmooth data
  \thanks
  {
    This work was supported in part
    by National Natural Science Foundation
    of China (11771312).
  }
}
\author{
  Hao Luo \thanks{Email: galeolev@foxmail.com},
  Binjie Li \thanks{Corresponding author. Email: libinjie@scu.edu.cn},
  Xiaoping Xie \thanks{Email: xpxie@scu.edu.cn} \\
  {School of Mathematics, Sichuan University, Chengdu 610064, China}
}

\date{}
\maketitle

\begin{abstract}
  This paper analyzes the convergence of a Petrov-Galerkin method for time fractional
  wave problems with nonsmooth data. Well-posedness and regularity of the weak
  solution to the time fractional wave problem are firstly established. Then an
  optimal convergence analysis with nonsmooth data is derived. Moreover, several
  numerical experiments are presented to validate the theoretical results.
\end{abstract}

\medskip\noindent{\bf Keywords:}
fractional wave problem, regularity,
Petrov-Galerkin,
convergence analysis,
nonsmooth data.

\section{Introduction}
Let $ T>0 $ be a given time and $ \Omega \subset \mathbb R^d $ ($d=1,2,3$) be a
convex $d$-polytope. This paper considers the following time fractional wave problem:
\begin{equation}
  \label{eq:model}
  \left\{
    \begin{aligned}
      \D_{0+}^\alpha (u-u_0-tu_1)- \Delta u & = f  &  & \text{in $~~ \Omega\times(0,T)$,}         \\
      u                                 & = 0   &  & \text{on $ \partial\Omega \times (0,T) $,} \\
      u(0)                              & = u_0&  & \text{in $~~ \Omega $,}\\
      u_t(0)                             & = u_1 &  & \text{in $~~ \Omega $,}
    \end{aligned}
  \right.
\end{equation}
where $ 1<\alpha<2 $, $ \D_{0+}^\alpha $ is a Riemann-Liouville fractional differential operator of order $ \alpha $, and $ u_0,\,u_1$ and $f $ are given data.

In recent years, the time fractional wave problem \cref{eq:model} has attracted much
attention. It has been applied to model the anomalous process which may occur in
anomalous transport or diffusion in heterogeneous media \cite{Luchko2011}. In
addition, the solution to the time fractional wave problem governs the propagation of
stress waves in viscoelastic media \cite{Mainardi1996,Mainardi2001}. For more details
related to the applications of problem \cref{eq:model}, we refer the reader to
\cite{Kilbas2006,Podlubny1998}.

Let us first summarize some regularity results of the fractional wave problem. In
\cite{Bazhlekova1996}, Bazhlekova considered the Duhamel-type representation of the
solution to the fractional wave equation by the Mittag-Leffler function; however, the
author did not investigate the regularity of the solution. Later on, in \cite{Bazhlekova2001}, Bazhlekova obtained the maximal $ L^p $-regularity estimate
\[\small
\begin{split}
  \nm{u}_{L^p(0,T;L^q(\Omega))} +
\nm{\D_{0+}^\alpha u}_{L^p(0,T;L^q(\Omega))} +
\nm{\Delta u}_{L^p(0,T;L^q(\Omega))} \leqslant
C \nm{f}_{L^p(0,T;L^q(\Omega))},
\end{split}
\]
where $ 1 < p,q < \infty $. Sakamoto et al.~\cite{Sakamoto2011} introduced a weak solution to the fractional wave equation by means of the eigenfunction expansions. They
established the well-posedness of the weak solution and derived several regularity estimates in the continuous vector-valued spaces.

Then, let us review the numerical treatments for the fractional wave equation. In
\cite{Oldham1974}, two kinds of finite difference methods for the computation of
fractional derivatives were presented: the first method, called $ L $-type scheme,
uses the Lagrange interpolation technique; the second one, called $ G $-type method,
is based on the Gr\"{u}nwald-Letnikov definition. Sun et al.~\cite{Sun2006} developed
a Crank-Nicolson scheme by the $L1$-scheme for the fractional wave equation and
derived the convergence order $\mathcal O(\tau^{3-\alpha})$ for $C^3$ solutions. Jin
et al.~\cite{Jin2016} analyzed the $G1$-method and the second-order backward
difference method for fractional wave equations, and they obtained the accuracies
$\mathcal O(\tau)$ and $\mathcal O(\tau^2)$, respectively. In our previous work
\cite{Li2018}, a time-spectral method for fractional wave problems was designed,
which possesses exponential decay in temporal discretization, under the condition
that the solution is smooth enough. Recently, to conquer the singularity in time
variable, Li et al.~\cite{Li2017As} presented a space-time finite element method for
problem \cref{eq:model}, and proved that high-order temporal accuracy can still be
achieved if appropriate graded temporal grids are adopted. Under some conditions, problem \cref{eq:model} is equivalent to an integro-differential model, and there are many works on the numerical methods for this model; see \cite{Lubich1996,Cuesta2006,Mustapha2012Superconvergence} and the references therein. To our knowledge, except
for \cite{Jin2016}, no work available is devoted to the numerical analysis for
problem \cref{eq:model} with nonsmooth data.

This motivates us to consider the numerical analysis for problem \cref{eq:model} with
low regularity data. In this paper,
we first introduce a weak solution of problem \cref{eq:model}
by the variational approach and establish the regularity
results of the weak solution in the case $ u_0 = u_1 = 0 $.
Then by means of the famous transposition method \cite{Lions-I},
the weak solution and its regularity of problem
\cref{eq:model} are also considered with more general data.
Finally, under the condition that $ u_0=u_1=0 $,
for a Petrov-Galerkin method we obtain the following
error estimates:
\begin{itemize}
    \item if $ f \in L^{2}(0,T;L^2(\Omega)) $, then
    \begin{equation}
    \label{eq:est1}
    \begin{aligned}
    {}& \nm{(u-U)'}_{{}_0H^{(\alpha-1)/2}(0,T;L^2(\Omega))} +
    \nm{u-U}_{ C([0,T];\dot H^1(\Omega))}  \\
    \leqslant{}&C \big( \tau^{(\alpha-1)/2} +   \eta_1(\alpha,\tau,h) \big)
    \nm{f}_{L^{2}(0,T;L^2(\Omega))},
    \end{aligned}
    \end{equation}
    where
    \begin{equation*}
    \eta_1(\alpha,\tau,h):=
    \left\{
    \begin{aligned}
    &h^{1-1/\alpha}&&{\text {if}~} 1<\alpha\leqslant 3/2,\\
    &\tau^{-1/2}h&&{\text {if}~} 3/2<\alpha<2;
    \end{aligned}
    \right.
    \end{equation*}
    \item if $ f \in {}_0H^{2-\alpha}(0,T;L^2(\Omega)) $, then
    \begin{equation}    \label{eq:est2}
    \small
    \begin{split}
        \nm{u-U}_{ C([0,T];\dot H^1(\Omega))}
    \leqslant C
    \big(\tau^{(3-\alpha)/2} +\eta_2(\alpha,\tau,h)\big)
    \nm{f}_{{}_0\!H^{2-\alpha}(0,T;L^2(\Omega))},
    \end{split}
    \end{equation}
        \begin{equation}    \label{eq:est3}
    \small
    \begin{split}
\nm{u'\!\!-\!U'}_{{}_0\!H^{(\alpha-1)/2}(0,T;L^2(\Omega))}
\!\leqslant \!
C \big( \tau^{(3-\alpha)/2} \!\!+\! \eta_3(\alpha,\tau,h)\big)\!
\nm{f}_{{}_0\!H^{2
        \!-
        \!\alpha}(0,T;L^2(\Omega))},
    \end{split}
    \end{equation}
    where
    \begin{equation*}
    \eta_2(\alpha,\tau,h):=
    \left\{
    \begin{aligned}
    &h&&{\text {if}~} 1<\alpha< 3/2,\\
    &  \big(1+\snm{\log h} \big)h&&{\text {if}~} \alpha= 3/2,\\
    &h^{3/\alpha-1}+\tau^{3/2-\alpha}h  &&{\text {if}~}3/2<\alpha<2,
    \end{aligned}
    \right.
    \end{equation*}
    and
    \begin{equation*}
    \eta_3(\alpha,\tau,h):=
    \left\{
    \begin{aligned}
    &h^{3/\alpha-1}&&{\text {if}~}1<\alpha\leqslant 3/2,\\
    &  \tau^{3/2-\alpha}h&&{\text {if}~}3/2<\alpha<2.
    \end{aligned}
    \right.
    \end{equation*}
\end{itemize}
We note that, if $1<\alpha\leqslant3/2$ then estimates \cref{eq:est1,eq:est3}
are optimal with respect to the regularity of $u$ and  \cref{eq:est2} is optimal and nearly optimal with respect to the regularity of $ u $ for $ 1<\alpha <3/2 $ and $ \alpha = 3/2 $, respectively. This is verified by our numerical experiments. If $3/2<\alpha<2$, then all the estimates \cref{eq:est1,eq:est2,eq:est3} are optimal with respect to the regularity of $u$ provided that $h\leqslant C\tau^{\alpha/2}$. However, numerical results also indicate the optimal accuracy with respect to the regularity without this requirement.

The remainder of this paper consists of five sections. Firstly, some conventions and
Sobolev spaces are introduced in \cref{sec:pre}. Secondly, several fundamental
properties of the fractional calculus operators are summarized in
\cref{sec:frac-cal}. Thirdly, the well-posedness and regularity of the weak solution
to problem \cref{eq:model} are rigorously established in \cref{sec:regu}. Fourthly,
the convergence of a Petrov-Galerkin method is derived in \cref{sec:fem}.
Finally, in \cref{sec:numer}
numerical experiments are presented to verify the theoretical results and \cref{sec:con} provides some concluding remarks.

\section{Preliminary}
\label{sec:pre}
First of all, let us introduce some conventions: for a Lebesgue measurable set $
\omega $ of $ \mathbb R^l $ ($ l= 1,2,3,4 $), $ H^\gamma(\omega) $ ($
\gamma \in\mathbb R$) and $ H_0^\beta(\omega) $ ($ \beta>0$) denote two
standard Sobolev spaces \cite[Chapter 34]{Tartar2007} and the symbol $ \dual{p,q}_\omega $
means $ \int_\omega pq $ whenever $pq\in L^1(\omega)$; for a Banach space $ X $, $ X^* $ is the dual space of $ X
$ and $ \dual{\cdot,\cdot}_X $ means the duality pairing between $ X^* $ and $ X $;
if $ X $ and $ Y $ are two Banach spaces, then $ [X,Y]_{\theta,2} $ is the
interpolation space constructed by the famous $ K $-method \cite[Chapter
22]{Tartar2007};
the symbol $C_\times$ denotes a generic positive constant depending
only on its subscript(s) $\times$, and its value may differ at each occurrence.

Next, we form some Hilbert spaces on the eigenvectors of $-\Delta$ and present some
basic properties of these spaces. It is well known that there exists an orthonormal
basis $\{\phi_n: n \in \mathbb N \}$ of $ L^2(\Omega) $ such that
\[
  \left\{
    \begin{aligned}
      -\Delta \phi_n ={} &\lambda_n \phi_n&&\,
      {\rm~in~}~\,\,\Omega,\\
      \phi_n={}&0&&{\rm~on~}\partial\Omega,
    \end{aligned}
  \right.
\]
where $ \{ \lambda_n: n \in \mathbb N \} $ is a positive non-decreasing sequence and
$\lambda_n\to\infty$ as $n\to\infty$. For any $ \gamma \in\mathbb R $, define
\[
  \dot H^\gamma(\Omega) := \left\{
    \sum_{n=0}^\infty c_n \phi_n:\
    \sum_{n=0}^\infty \lambda_n^\gamma c_n^2 < \infty
  \right\}
\]
and endow this space with the inner product
\[
\left(\sum_{n=0}^\infty c_n \phi_n,
\sum_{n=0}^\infty d_n \phi_n\right)_{
    \dot H^\gamma(\Omega) }:=
\sum_{n=0}^\infty \lambda_n^\gamma c_nd_n,
\]
for all $\sum_{n=0}^\infty c_n \phi_n,\,\sum_{n=0}^\infty d_n \phi_n\in\dot H^\gamma(\Omega)$. Denote by $\nm{\cdot}_{\dot H^\gamma(\Omega) }$ the induced norm with respect to this inner product. We see that $\dot H^\gamma(\Omega)$ is a separable Hilbert space with an orthonormal basis $\{\lambda_n^{-\gamma/2}\phi_n:n\in\mathbb N\}$ and the space $\dot H^{-\gamma}(\Omega)$ is the dual space of $\dot H^{\gamma}(\Omega)$ in the following sense
\[
\dual{\sum_{n=0}^\infty c_n \phi_n,
\sum_{n=0}^\infty d_n \phi_n}_{
    \dot H^\gamma(\Omega) }:=
\sum_{n=0}^\infty c_nd_n,
\]
for all $\sum_{n=0}^\infty c_n \phi_n\in\dot H^{-\gamma}(\Omega)$ and $ \sum_{n=0}^\infty d_n \phi_n\in\dot H^\gamma(\Omega)$.
Furthermore, it is clear that $ \dot H^0(\Omega)=L^2(\Omega) $ and $ \dot H^1(\Omega) $ coincides
with $ H_0^1(\Omega) $ with equivalent norms. Hence, for $ 0 < \gamma < 1 $,
by the theory of interpolation
spaces \cite{Tartar2007}, $ \dot H^\gamma(\Omega) $ coincides
with $ H_0^\gamma(\Omega) = [L^2(\Omega),H_0^1(\Omega)]_{\gamma,2} $ with equivalent
norms. As \cite[Corollary 9.1.23]{Hackbusch1992} implies
\[
  \nm{v}_{H^2(\Omega)} \leqslant C_\Omega
  \nm{v}_{\dot H^2(\Omega)}
  \quad \forall\, v \in \dot H^2(\Omega),
\]
the space $ \dot H^\gamma(\Omega) $ is continuously embedded
into $ H_0^1(\Omega) \cap H^\gamma(\Omega) $ if $ 1 < \gamma < 2 $.

In the rest of this section, assume that $-\infty<a<b<\infty$. Now we introduce some Sobolev spaces as follows. For any $ m \in\mathbb
N $, define
\begin{align*}
{}^0\!H^m(a,b):={}&\{v\in H^m(a,b): v^{(k)}(b)=0,\,\, 0\leqslant k<m,~k\in\mathbb N\},\\
{}_0H^m(a,b):={}&\{v\in H^m(a,b): v^{(k)}(a)=0,\,\, 0\leqslant k<m,~k\in\mathbb N\},
\end{align*}
where $ v^{(k)} $ is the $ k $-th weak derivative of $ v $, and endow those two spaces with the following norms
\begin{align*}
\nm{v}_{{}^0\!H^m(a,b)}:={}&\big\|v^{(m)}\big\|_{L^2(a,b)},\quad\forall\,v\in{}^0\!H^m(a,b),\\
\nm{v}_{{}_0H^m(a,b)}:={}&\big\|v^{(m)}\big\|_{L^2(a,b)},\quad\forall\,v\in{}_0H^m(a,b),
\end{align*}
respectively. For $k-1<\gamma<k,~k\in\mathbb N_{>0}$, define
\begin{align*}
{}^0\!H^\gamma(a,b) &:= [ {}^0\!H^{k-1}(a,b) , {}^0\!H^k(a,b) ]_{\gamma -k+1 ,2},\\
{}_0H^\gamma(a,b) &:= [ {}_0H^{k-1}(a,b), {}_0H^k(a,b) ]_{\gamma -k+1 ,2}.
\end{align*}
By \cite[Chapter 1]{Lions-I}, we have the following standard
results: if $ 0<\gamma<1/2 $, then
$ {}^0\!H^\gamma(a,b) $, $ {}_0H^\gamma(a,b) $ and $
H^\gamma(a,b) $ are equivalent;
if $m+1/2<\gamma< m+1,m\in\mathbb N$, then
\begin{align*}
{}^0\!H^\gamma(a,b)={}&\{v\in {}^0\!H^m(a,b):v^{(m)}(b) =0,\, (b-t)^{m-\gamma}v^{(m)}\in L^{2}(a,b)\},\\
{}_0H^\gamma(a,b)={}&\{v\in {}_0H^m(a,b): v^{(m)}(a) =0,\,(t-a)^{m-\gamma}
v^{(m)}\in L^{2}(a,b)\},
\end{align*}
with equivalent norms;
if $m\leqslant\gamma\leqslant m+1/2,m\in\mathbb N$, then
\begin{align*}
{}^0\!H^\gamma(a,b)={}&\{v\in {}^0\!H^m(a,b): (b-t)^{m-\gamma}v^{(m)}\in L^{2}(a,b)\},\\
{}_0H^\gamma(a,b)={}&\{v\in {}_0H^m(a,b): (t-a)^{m-\gamma}
v^{(m)}\in L^{2}(a,b)\},
\end{align*}
in the sense of equivalent norms.
For $\gamma\geqslant0$, the spaces ${}^0\!H^\gamma(a,b)$ and ${}_0H^\gamma(a,b) $ can be defined
equivalently as the domains of fractional power of second order differential operators (see \cite{Yamamoto2018,Gorenflo1999Operator,Gorenflo2015}).
For $\gamma>0$, denote by ${}_0H^{-\gamma}(a,b)$ and ${}^0\!H^{-\gamma}(a,b)$ the dual spaces of ${}^0\!H^{\gamma}(a,b)$ and ${}_0H^{\gamma}(a,b)$, respectively. Since ${}^0\!H^{\gamma}(a,b)$ and ${}_0H^{\gamma}(a,b)$ are reflexive, they are the dual spaces of ${}_0H^{-\gamma}(a,b)$ and ${}^0\!H^{-\gamma}(a,b)$, respectively. Moreover, by \cite[Theorems~1.18 and 1.23]{Lunardi2018} and Theorems 12.2-12.6 of
\cite[Chapter~1]{Lions-I}, we readily conclude the following lemma.
\begin{lem}
    \label{lem:inter_space}
    If $ 0 < \theta < 1 $ and $ \beta,\gamma\in\mathbb R $ then
    \begin{align}
        \big[ {}^0\!H^\beta(a,b), {}^0\!H^\gamma(a,b) \big]_{\theta,2} ={}&
    {}^0\!H^{(1-\theta)\beta + \theta \gamma}(a,b),\notag\\
        \big[ {}_0H^\beta(a,b), {}_0H^\gamma(a,b) \big]_{\theta,2} ={}&
    {}_0H^{(1-\theta)\beta + \theta \gamma}(a,b),
    \label{eq:inter_space-1}
    \end{align}
    with equivalent norms.
\end{lem}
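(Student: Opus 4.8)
The plan is to prove both identities first for nonnegative exponents, using the operator-theoretic description of the scales recalled immediately before the lemma, and then to pass to arbitrary real $\beta,\gamma$ by duality and reiteration. Throughout I argue for the family ${}^0\!H^\gamma(a,b)$; the family ${}_0H^\gamma(a,b)$ is handled identically after interchanging the endpoints $a$ and $b$. Set $\delta:=(1-\theta)\beta+\theta\gamma$, so that the claim reads $\big[{}^0\!H^\beta,{}^0\!H^\gamma\big]_{\theta,2}={}^0\!H^{\delta}$ with equivalent norms.

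First I would settle the case $\beta,\gamma\geqslant0$. By the fractional-power characterization, each ${}^0\!H^s(a,b)$ with $s\geqslant0$ is the domain $D(\Lambda^{s})$ of a power of one fixed positive operator $\Lambda$, so all these spaces belong to a single interpolation scale anchored at $L^2(a,b)={}^0\!H^0$. Fixing an integer $N>\max(\beta,\gamma)$ and the couple $\big(L^2(a,b),{}^0\!H^N(a,b)\big)$, the intermediate identity ${}^0\!H^s=\big[L^2,{}^0\!H^N\big]_{s/N,2}$ for $0\leqslant s\leqslant N$ (from the fractional-power description and \cite[Chapter~1, Theorems~12.2--12.6]{Lions-I}), combined with the reiteration theorem \cite[Theorem~1.18]{Lunardi2018}, yields $\big[{}^0\!H^\beta,{}^0\!H^\gamma\big]_{\theta,2}={}^0\!H^{\delta}$ whenever $\beta,\gamma\geqslant0$ (so that $\delta\geqslant0$). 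Since the same operator describes the ${}_0H$-scale at the endpoint $a$, the analogous identity holds for ${}_0H$.

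Next I would treat the case $\beta,\gamma\leqslant0$ by duality. Here $-\beta,-\gamma\geqslant0$, the spaces are reflexive, and by definition ${}^0\!H^\beta=\big({}_0H^{-\beta}\big)^*$ and ${}^0\!H^\gamma=\big({}_0H^{-\gamma}\big)^*$. The duality theorem for the real method at $p=2$, namely $\big[X_0^*,X_1^*\big]_{\theta,2}=\big(\big[X_0,X_1\big]_{\theta,2}\big)^*$ \cite[Theorem~1.23]{Lunardi2018}, applied to $X_0={}_0H^{-\beta}$ and $X_1={}_0H^{-\gamma}$, turns the nonnegative-exponent identity for the ${}_0H$-scale into $\big[{}^0\!H^\beta,{}^0\!H^\gamma\big]_{\theta,2}=\big({}_0H^{-\delta}\big)^*={}^0\!H^{\delta}$, where now $\delta\leqslant0$. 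Thus every same-sign configuration of $(\beta,\gamma)$ is covered.

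The remaining, and hardest, case is the mixed sign $\beta<0<\gamma$ (the degenerate subcases $\beta=0$ or $\gamma=0$ being already contained in the previous two steps). This cannot be obtained from the nonnegative case by a single duality, because one endpoint space is a genuine function space while the other is a dual, negative-order space; moreover the cross-duality $\big({}^0\!H^\gamma\big)^*={}_0H^{-\gamma}$ shows that the generator $\Lambda$ is not self-adjoint (its two adjoint scales are ${}^0\!H$ and ${}_0H$), so the classical Hilbert-scale argument does not apply verbatim. To close this case I would invoke the interpolation identities stated directly for these Sobolev spaces and their negative-order duals in \cite[Chapter~1, Theorems~12.2--12.6]{Lions-I}, which pin down $\big[{}^0\!H^\beta,{}^0\!H^\gamma\big]_{\theta,2}$ across the junction at order $0$; equivalently, one uses that the underlying fractional-derivative operator admits bounded imaginary powers (the functional-calculus viewpoint of \cite{Gorenflo1999Operator,Gorenflo2015,Yamamoto2018}), for which the interpolation of fractional-power domains remains valid on the two-sided scale. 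I expect the bookkeeping at order $s=0$ — reconciling the primal definition ${}^0\!H^0=L^2$ with its role as a dual endpoint, and checking that all norm-equivalence constants depend only on $a,b,\theta,\beta,\gamma$ — to be where the real care is required, whereas the two same-sign steps are routine applications of the cited reiteration and duality theorems.
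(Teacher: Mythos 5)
Your same-sign cases are fine and are essentially what the paper intends: reiteration within the nonnegative scale (via the fractional-power description), then duality via \cite[Theorem~1.23]{Lunardi2018} for the nonpositive scale. The gap sits exactly at the step you yourself flag as the crux, the mixed case $\beta<0<\gamma$. You propose to close it by citing Theorems 12.2--12.6 of \cite[Chapter~1]{Lions-I} as ``pinning down'' $\big[{}^0\!H^\beta(a,b),{}^0\!H^\gamma(a,b)\big]_{\theta,2}$ across order $0$, but those theorems are stated for the symmetric spaces $H_0^s(a,b)$ and their duals $H^{-s}(a,b)$ (vanishing conditions at \emph{both} endpoints), not for the one-sided spaces ${}_0H^s(a,b)$, ${}^0\!H^s(a,b)$ used here; they do not apply verbatim, so as written the decisive step is unsupported. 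The alternative you mention (bounded imaginary powers of the one-sided fractional-derivative operator) would likewise have to be proved rather than asserted, precisely because, as you observe, that operator is not self-adjoint.

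The paper closes this case with a short two-sided embedding argument, spelled out in the remark following the lemma: since $(H^1(a,b))^*\hookrightarrow{}_0H^{-1}(a,b)$ and $H_0^1(a,b)\hookrightarrow{}_0H^1(a,b)$, monotonicity of real interpolation under continuous embeddings (Theorem 12.3 of \cite[Chapter~1]{Lions-I}) combined with the classical identity for the symmetric couple gives $L^2(a,b)\hookrightarrow\big[{}_0H^{-1}(a,b),{}_0H^1(a,b)\big]_{1/2,2}$; conversely, ${}_0H^{-1}(a,b)\hookrightarrow H^{-1}(a,b)$ and ${}_0H^1(a,b)\hookrightarrow H^1(a,b)$ give (Theorem 12.4) the reverse embedding into $\big[H^{-1}(a,b),H^1(a,b)\big]_{1/2,2}=L^2(a,b)$. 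Hence $\big[{}_0H^{-1}(a,b),{}_0H^1(a,b)\big]_{1/2,2}=L^2(a,b)$ with equivalent norms, and reiteration \cite[Theorem~1.23]{Lunardi2018} then propagates the identity across the sign change, e.g.\ ${}_0H^\theta(a,b)=\big[{}_0H^{-1}(a,b),{}_0H^1(a,b)\big]_{(1+\theta)/2,2}$. If you replace your direct citation of Theorems 12.2--12.6 by this transfer argument, your proof is complete and coincides with the paper's.
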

\begin{rem}
    We will give more details about how to derive \cref{eq:inter_space-1}. As $
    (H^1(a,b))^* $ and $ H_0^1(a,b) $ are continuously embedded in $ {}_0H^{-1}(a,b) $
    and $ {}_0H^1(a,b) $, respectively, by Theorem 12.3 of \cite[Chapter~1]{Lions-I}
    we have that $ L^2(a,b) $ is continuously embedded in $ [{}_0H^{-1}(a,b),
    {}_0H^1(a,b)]_{1/2,2} $. Conversely, since $ {}_0H^{-1}(a,b) $ and $ {}_0H^1(a,b) $
    are continuously embedded in $ H^{-1}(a,b) $ and $ H^1(a,b) $, respectively, by
    Theorem 12.4 of \cite[Chapter~1]{Lions-I} we have that $ [{}_0H^{-1}(a,b),
    {}_0H^1(a,b)]_{1/2,2} $ is continuously embedded in $ L^2(a,b) $. Therefore, $
    [{}_0H^{-1}(a,b),{}_0H^1(a,b)]_{1/2,2} = L^2(a,b) $ with equivalent norms. Then by
    \cite[Theorem 1.23]{Lunardi2018} we obtain that, for any $ 0 < \theta < 1 $,
    \[
    \begin{split}\small
    {}_0H^\theta(a,b)  & = [L^2(a,b), {}_0H^1(a,b)]_{\theta,2}  =
    \big[ [{}_0H^{-1}(a,b), {}_0H^1(a,b)]_{1/2,2}, {}_0H^1(a,b) \big]_{\theta,2} \\
    & = [{}_0H^{-1}(a,b), {}_0H^1(a,b)]_{(1+\theta)/2,2},
    \end{split}
    \]
    with equivalent norms. The other cases are derived similarly.
\end{rem}

Finally, let us introduce some vector-valued spaces.
Let $X$ be a separable Hilbert space with an orthonormal basis $\{e_n:n\in\mathbb N\}$. For any $ \gamma\in\mathbb R $, define
\[
{}_0H^{\gamma}(a,b;X):=\left\{\sum_{n=0}^\infty c_ne_n:\sum_{n=0}^\infty\nm{c_n}_{{}_0H^\gamma(a,b)}^2<\infty\right\},
\]
and endow this space with the norm
\[
\bigg\|\sum_{n=0}^\infty c_ne_n\bigg\|_{{}_0H^{\gamma}(a,b;X)}:=\left(\sum_{n=0}^\infty\nm{c_n}_{{}_0H^\gamma(a,b)}^2\right)^{1/2}.
\]
The space $ {}^0\!H^\gamma(a,b;X) $ can be defined analogously. It is evident that both ${}_0H^{\gamma}(a,b;X)$ and $ {}^0\!H^\gamma(a,b;X) $ are reflexive. In addition, the space $ {}^0\!H^{-\gamma}(a,b;X) $ is the dual space of $ {}_0H^\gamma(a,b;X) $ in the sense that
\[
\dual{\sum_{n=0}^\infty c_ne_n,\sum_{n=0}^\infty d_ne_n}_{ {}_0H^\gamma(a,b;X)} :=
\sum_{n=0}^\infty \dual{c_n,d_n}_{{}_0H^\gamma(a,b)},
\]
for all $\sum_{n=0}^\infty c_ne_n\in{}^0\!H^{-\gamma}(a,b;X) $ and $\sum_{n=0}^\infty d_ne_n\in{}_0H^\gamma(a,b;X) $, and
the space $ {}_0H^{-\gamma}(a,b;X) $ is the dual space of $ {}^0\!H^\gamma(a,b;X) $ in the sense that
\[
\dual{\sum_{n=0}^\infty c_ne_n,
    \sum_{n=0}^\infty d_ne_n}_{ {}^0\!H^\gamma(a,b;X)} :=
\sum_{n=0}^\infty \dual{c_n,d_n}_{{}^0\!H^\gamma(a,b)},
\]
for all $\sum_{n=0}^\infty c_ne_n\in{}_0H^{-\gamma}(a,b;X) $ and $\sum_{n=0}^\infty d_ne_n\in{}^0\!H^\gamma(a,b;X) $.
Moreover, we use $C([a,b];X)$ to denote the continuous $X$-valued space.
\begin{lem}
    \label{lem:interp}
    Assume that $ s,r,\beta ,\gamma\in\mathbb R $ and $0<\theta<1$.
    If $ v \in
    {}_0H^\beta(0,1;\dot H^r(\Omega))
    \cap {}_0H^\gamma(0,1;\dot H^s(\Omega)) $, then
    \begin{equation}
    \label{eq:interp-1}
    \begin{split}
    {}& \nm{v}_{
        {}_0H^{(1-\theta)\beta + \theta \gamma}
        (0,1;\dot H^{(1-\theta)r+\theta s}(\Omega))} \\
    \leqslant{} &   C_{\beta,\gamma,\theta}
    \nm{v}_{{}_0H^\beta(0,1;\dot H^r(\Omega)))}^{1-\theta}
    \nm{v}_{{}_0H^\gamma(0,1;\dot H^s(\Omega))}^\theta.
    \end{split}
    \end{equation}
\end{lem}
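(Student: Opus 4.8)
The plan is to diagonalize the inequality over the eigenbasis $\{\phi_n\}$ of $-\Delta$, so that the vector-valued statement reduces to a scalar interpolation inequality applied mode by mode, combined with a discrete Hölder inequality in $n$. Writing $v(t)=\sum_{n=0}^\infty v_n(t)\phi_n$ with scalar coefficient functions $v_n$, and recalling that $\{\lambda_n^{-\rho/2}\phi_n\}$ is an orthonormal basis of $\dot H^\rho(\Omega)$, the definition of the vector-valued norm gives, for any exponents $\rho,\delta\in\mathbb R$,
\[
\nm{v}_{{}_0H^\delta(0,1;\dot H^\rho(\Omega))}^2=\sum_{n=0}^\infty\lambda_n^\rho\,\nm{v_n}_{{}_0H^\delta(0,1)}^2.
\]
Applying this with $(\delta,\rho)$ equal to $(\beta,r)$, $(\gamma,s)$ and $\big((1-\theta)\beta+\theta\gamma,(1-\theta)r+\theta s\big)$ rewrites all three norms in \cref{eq:interp-1} as weighted $\ell^2$ sums of scalar ${}_0H$-norms of the $v_n$. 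Since $v$ lies in both vector spaces, each $v_n\in{}_0H^\beta(0,1)\cap{}_0H^\gamma(0,1)$, so the mode-by-mode step below is meaningful.

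Next I would invoke the scalar interpolation step. By \cref{lem:inter_space} we have $[{}_0H^\beta(0,1),{}_0H^\gamma(0,1)]_{\theta,2}={}_0H^{(1-\theta)\beta+\theta\gamma}(0,1)$ with equivalence constants depending only on $\beta,\gamma,\theta$, so the standard interpolation inequality for the $K$-method yields, for every $n$,
\[
\nm{v_n}_{{}_0H^{(1-\theta)\beta+\theta\gamma}(0,1)}\le C_{\beta,\gamma,\theta}\,\nm{v_n}_{{}_0H^\beta(0,1)}^{1-\theta}\,\nm{v_n}_{{}_0H^\gamma(0,1)}^\theta,
\]
with a constant uniform in $n$. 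Squaring, multiplying by $\lambda_n^{(1-\theta)r+\theta s}$ and writing the weight as $(\lambda_n^r)^{1-\theta}(\lambda_n^s)^\theta$, each summand of the target norm is bounded by $A_n^{1-\theta}B_n^\theta$, where $A_n=\lambda_n^r\nm{v_n}_{{}_0H^\beta(0,1)}^2$ and $B_n=\lambda_n^s\nm{v_n}_{{}_0H^\gamma(0,1)}^2$.

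Finally I would sum over $n$ and apply the discrete Hölder inequality with conjugate exponents $1/(1-\theta)$ and $1/\theta$, which gives $\sum_{n} A_n^{1-\theta}B_n^\theta\le\big(\sum_{n}A_n\big)^{1-\theta}\big(\sum_{n}B_n\big)^\theta$; recognizing $\sum_n A_n$ and $\sum_n B_n$ as the squared norms $\nm{v}_{{}_0H^\beta(0,1;\dot H^r(\Omega))}^2$ and $\nm{v}_{{}_0H^\gamma(0,1;\dot H^s(\Omega))}^2$ and taking square roots produces exactly \cref{eq:interp-1}. These computations are routine; the only genuine point of care is that the scalar interpolation inequality must hold with an $n$-independent constant, which is precisely what the mode-free norm equivalence in \cref{lem:inter_space} guarantees. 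I would also remark that the argument is insensitive to the signs of $\beta,\gamma,r,s$, since both \cref{lem:inter_space} and the coefficient representation of the norm are valid for all real exponents.
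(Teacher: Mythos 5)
Your proposal is correct and follows essentially the same route as the paper's proof: decompose $v=\sum_n v_n\phi_n$ in the eigenbasis, apply the scalar interpolation inequality from \cref{lem:inter_space} together with \cite[Corollary~1.7]{Lunardi2018} mode by mode with an $n$-independent constant, split the weight $\lambda_n^{(1-\theta)r+\theta s}=(\lambda_n^r)^{1-\theta}(\lambda_n^s)^\theta$, and finish with the discrete H\"older inequality. No gaps; the remark about uniformity of the constant in $n$ is exactly the point the paper's argument also relies on.
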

\begin{proof}
    By definition, there exists a unique decomposition $ v = \sum_{n = 0}^\infty v_n \phi_n $, such that
    \begin{align*}
    \nm{v}_{{}_0H^\beta(0,1;\dot H^r(\Omega))}^2 =
    \sum_{n=0}^\infty \lambda_n^r \nm{v_n}_{{}_0H^\beta(0,1)}^2, \\
    \nm{v}_{{}_0H^\gamma(0,1;\dot H^s(\Omega))}^2 =
    \sum_{n=0}^\infty \lambda_n^s \nm{v_n}_{{}_0H^\gamma(0,1)}^2.
    \end{align*}
    Therefore, by \cite[Corollary 1.7]{Lunardi2018} and \cref{lem:inter_space} we have
    \begin{align*}
    & \nm{v}_{
        {}_0H^{(1-\theta)\beta + \theta \gamma}
        (0,1;\dot H^{(1-\theta)r + \theta s}(\Omega))
    }^2 \\
    ={} &
    \sum_{n=0}^\infty \lambda_n^{(1-\theta)r + \theta s}
    \nm{v_n}_{{}_0H^{(1-\theta)\beta + \theta \gamma}(0,1)}^2 \\
    \leqslant{} & C_{\beta,\gamma,\theta}
    \sum_{n=0}^\infty \lambda_n^{(1-\theta)r + \theta s}
    \nm{v_n}_{[{}_0H^{\beta}(0,1),{}_0H^{\gamma}(0,1)]_{\theta,2}}^2 \\
    \leqslant{} &
C_{\beta,\gamma,\theta}
    \sum_{n=0}^\infty \lambda_n^{(1-\theta)r + \theta s}
    \nm{v_n}_{{}_0H^\beta(0,1)}^{2(1-\theta)}
    \nm{v_n}_{{}_0H^\gamma(0,1)}^{2\theta} \\
    ={} &
    C_{\beta,\gamma,\theta}
    \sum_{n=0}^\infty
    \big( \lambda_n^r\nm{v_n}_{{}_0H^\beta(0,1)}^2 \big)^{1-\theta}
    \big( \lambda_n^s\nm{v_n}_{{}_0H^\gamma(0,1)}^2 \big)^\theta \\
    \leqslant{} & C_{\beta,\gamma,\theta}
    \nm{v}_{{}_0H^\beta(0,1;\dot H^r(\Omega))}^{2(1-\theta)}
    \nm{v}_{{}_0H^\gamma(0,1;\dot H^s(\Omega))}^{2\theta},
    \end{align*}
    which implies \cref{eq:interp-1}.
\end{proof}
\section{Fractional Calculus Operators}
\label{sec:frac-cal}
In this section, we firstly summarize several fundamental properties of fractional calculus
operators, then we generalize the fractional integral operator and prove some useful results. Assume that $ -\infty<a<b<\infty $ and $ X $ is a separable Hilbert space.
\begin{Def}
  \label{def:frac_calc}
  For $ \gamma >0$, define
  \begin{align*}
    \left(\D_{a+}^{-\gamma} v\right)(t) &:=
    \frac1{ \Gamma(\gamma) }
    \int_a^t (t-s)^{\gamma-1} v(s) \, \mathrm{d}s, \quad t\in(a,b), \\
    \left(\D_{b-}^{-\gamma} v\right)(t) &:=
    \frac1{ \Gamma(\gamma) }
    \int_t^b (s-t)^{\gamma-1} v(s) \, \mathrm{d}s, \quad t\in(a,b),
  \end{align*}
  for all $ v \in L^1(a,b;X) $, where $ \Gamma(\cdot) $ is the Gamma function and $ L^1(a,b;X) $ denotes the X-valued Bochner integrable space. In
  addition, let $ \D_{a+}^0 $ and $ \D_{b-}^0 $
  be the identity operator on $
  L^1(a,b;X) $. For $ j-1 < \gamma \leqslant j $ with $ j \in \mathbb N_{>0}  $,
  define
  \begin{align*}
    \D_{a+}^\gamma v & := \D^j \D_{a+}^{\gamma-j}v, \\
    \D_{b-}^\gamma v & := (-\D)^j \D_{b-}^{\gamma-j}v,
  \end{align*}
  for all $ v \in L^1(a,b;X) $, where $ \D $ is the first-order differential operator
  in the distribution sense.
\end{Def}
\begin{lem}[\cite{Samko1993}]
    \label{lem:basic-frac}
    Let $v\in L^1(a,b)$.
    If $  \gamma,\beta \geqslant0 $, then
    \[
    \D_{a+}^{-\gamma} \D_{a+}^{-\beta}v = \D_{a+}^{-\gamma-\beta}v, \quad
    \D_{b-}^{-\gamma} \D_{b-}^{-\beta}v= \D_{b-}^{-\gamma-\beta}v.
    \]
    If $  \gamma\geqslant \beta >0 $, then
\[
\D_{a+}^\gamma \D_{a+}^{-\beta}v = \D_{a+}^{\gamma-\beta}v, \quad
\D_{b-}^\gamma \D_{b-}^{-\beta}v = \D_{b-}^{\gamma-\beta}v.
\]
\end{lem}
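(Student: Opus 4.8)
The plan is to reduce both identities to the semigroup (index) law for the Riemann--Liouville fractional integrals and then treat the positive-order operators through the integer-order derivative appearing in their definition. Throughout I will handle only the left operators $\D_{a+}^{\pm\gamma}$, since the right operators follow verbatim under the reflection $t\mapsto a+b-t$, which carries $\D_{a+}^{-\gamma}$ to $\D_{b-}^{-\gamma}$ and $\D^{j}$ to $(-\D)^{j}$, exactly as in \cref{def:frac_calc}.

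First I will establish the index law $\D_{a+}^{-\gamma}\D_{a+}^{-\beta}v=\D_{a+}^{-\gamma-\beta}v$ for $\gamma,\beta>0$; the cases $\gamma=0$ or $\beta=0$ are immediate from the identity-operator convention. Writing the composition as an iterated integral and applying Fubini's theorem---whose hypotheses hold because $v\in L^1(a,b)$ and the kernel $(t-s)^{\gamma-1}(s-r)^{\beta-1}$ is locally integrable---reduces the inner integration to $\int_r^t(t-s)^{\gamma-1}(s-r)^{\beta-1}\,\mathrm ds$. The substitution $s=r+(t-r)\sigma$ turns this into $(t-r)^{\gamma+\beta-1}$ times the Beta integral $\int_0^1(1-\sigma)^{\gamma-1}\sigma^{\beta-1}\,\mathrm d\sigma=\Gamma(\gamma)\Gamma(\beta)/\Gamma(\gamma+\beta)$, and substituting back collapses the prefactors to $1/\Gamma(\gamma+\beta)$, giving $\D_{a+}^{-\gamma-\beta}v$.

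For the second identity with $\gamma\geqslant\beta>0$, I will pick $j\in\mathbb N_{>0}$ with $j-1<\gamma\leqslant j$, so that $\D_{a+}^{\gamma}=\D^{j}\D_{a+}^{\gamma-j}$ by definition. Since $\gamma-j\leqslant0$, the operator $\D_{a+}^{\gamma-j}$ is a fractional integral of order $j-\gamma\geqslant0$, and the index law just proved yields $\D_{a+}^{\gamma-j}\D_{a+}^{-\beta}v=\D_{a+}^{-(j-\gamma+\beta)}v$. Hence $\D_{a+}^{\gamma}\D_{a+}^{-\beta}v=\D^{j}\D_{a+}^{-(j-\mu)}v$, where $\mu:=\gamma-\beta\geqslant0$ and $0\leqslant j-\mu\leqslant j$, and it remains to identify $\D^{j}\D_{a+}^{-(j-\mu)}v$ with $\D_{a+}^{\mu}v$.

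To finish I will separate the integer part: choosing $j'\in\mathbb N$ with $j'-1<\mu\leqslant j'$ (so that $j'\leqslant j$ because $\mu\leqslant\gamma\leqslant j$), I factor $\D_{a+}^{-(j-\mu)}=\D_{a+}^{-(j-j')}\D_{a+}^{-(j'-\mu)}$ by the index law and cancel the $(j-j')$-fold integration against $(j-j')$ of the derivatives in $\D^{j}$, using the iterated fundamental theorem of calculus, which is valid almost everywhere because repeated integration of an $L^1$ function lands in the corresponding Sobolev class. What survives is $\D^{j'}\D_{a+}^{-(j'-\mu)}v=\D^{j'}\D_{a+}^{\mu-j'}v=\D_{a+}^{\mu}v=\D_{a+}^{\gamma-\beta}v$, as required. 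The step I expect to be most delicate is precisely this bookkeeping: one must confirm that the integer $j$ attached to $\gamma$ is compatible with the integer $j'$ attached to $\mu=\gamma-\beta$, and that cancelling $\D^{j-j'}\D_{a+}^{-(j-j')}$ is legitimate at the level of weak derivatives of merely $L^1$ data rather than only formally.
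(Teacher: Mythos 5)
The paper does not prove this lemma at all: it is stated as a known result and attributed to Samko--Kilbas--Marichev, so there is no in-paper argument to compare against. Your blind proof supplies the standard textbook derivation, and it is correct. The first identity is exactly the classical semigroup law: Fubini (justified since $v\in L^1(a,b)$ and the kernel $(t-s)^{\gamma-1}(s-r)^{\beta-1}$ is integrable over the relevant simplex) plus the Beta-integral evaluation $\int_0^1(1-\sigma)^{\gamma-1}\sigma^{\beta-1}\,\mathrm d\sigma=\Gamma(\gamma)\Gamma(\beta)/\Gamma(\gamma+\beta)$. For the second identity, your reduction $\D_{a+}^{\gamma}\D_{a+}^{-\beta}v=\D^{j}\D_{a+}^{-(j-\mu)}v$ with $\mu=\gamma-\beta$ is right, and the bookkeeping you flagged as delicate does go through: since $\mu\leqslant\gamma\leqslant j$ and $j$ is an integer, the integer $j'$ with $j'-1<\mu\leqslant j'$ satisfies $j'\leqslant j$, the splitting $\D_{a+}^{-(j-\mu)}=\D_{a+}^{-(j-j')}\D_{a+}^{-(j'-\mu)}$ uses only the already-proved index law with nonnegative orders, and the cancellation $\D^{j-j'}\D_{a+}^{-(j-j')}w=w$ is legitimate for $w\in L^1(a,b)$ because the $(j-j')$-fold primitive of an $L^1$ function is absolutely continuous with the correct weak derivatives (and $w=\D_{a+}^{-(j'-\mu)}v\in L^1(a,b)$ by the boundedness of the fractional integral on $L^1$, as in \cref{lem:basic-2}). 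The degenerate cases $\beta=\gamma$ (i.e.\ $\mu=0$, $j'=0$) are covered by the identity-operator convention of \cref{def:frac_calc}, and the reflection $t\mapsto a+b-t$ transfers everything to the right-sided operators. In short: where the paper defers to the literature, you have written out a complete and correct self-contained proof.
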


\begin{lem}[\cite{Kilbas2006}]
    \label{lem:basic-2}
    Assume that $ \gamma \geqslant0$.
    If $ w, v \in L^2(a,b) $, then
    \[
    \big\langle \D_{a+}^{-\gamma} w, v   \big\rangle_{(a,b)} =
    \big\langle w, \D_{b-}^{-\gamma} v   \big\rangle_{(a,b)}.
    \]
    If $ v \in L^2(a,b) $, then
    \begin{align*}
    \big\|\D_{a+}^{-\gamma} v    \big\|_{L^2(a,b)}
    &\leqslant \frac{(b-a)^{\gamma}}{\Gamma(\gamma+1)}
    \nm{v}_{L^2(a,b)}, \\
    \big\|\D_{b-}^{-\gamma} v    \big\|_{L^2(a,b)}
    &\leqslant  \frac{(b-a)^{\gamma}}{\Gamma(\gamma+1)}
    \nm{v}_{L^2(a,b)}.
    \end{align*}
\end{lem}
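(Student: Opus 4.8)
The plan is to dispose of the case $\gamma=0$ first---where $\D_{a+}^0$ and $\D_{b-}^0$ are the identity, so all three assertions are trivial---and then assume $\gamma>0$ throughout. The key observation is that $\D_{a+}^{-\gamma}$ is a one-sided convolution of $v$ with the locally integrable kernel $g_\gamma(t):=t^{\gamma-1}/\Gamma(\gamma)$, while $\D_{b-}^{-\gamma}$ is its reflection; both the norm bounds and the adjoint identity then follow from standard properties of convolutions.

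For the two norm estimates I would invoke Young's convolution inequality. Extending $v$ by zero outside $(a,b)$ and viewing $g_\gamma$ as supported on $(0,b-a)$, one has $\D_{a+}^{-\gamma}v=g_\gamma*v$ on $(a,b)$, whence $\nm{\D_{a+}^{-\gamma}v}_{L^2(a,b)}\leqslant\nm{g_\gamma}_{L^1(0,b-a)}\nm{v}_{L^2(a,b)}$. A direct computation gives
\[
\nm{g_\gamma}_{L^1(0,b-a)}=\frac1{\Gamma(\gamma)}\int_0^{b-a}t^{\gamma-1}\,\mathrm{d}t=\frac{(b-a)^\gamma}{\Gamma(\gamma+1)},
\]
which is exactly the claimed constant. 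The bound for $\D_{b-}^{-\gamma}$ then follows from the reflection $t\mapsto a+b-t$: this map is an $L^2$-isometry that conjugates $\D_{a+}^{-\gamma}$ into $\D_{b-}^{-\gamma}$, so the two operators share the same operator norm.

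For the adjoint relation I would write both pairings as a double integral over the triangle $\Delta:=\{(s,t):a<s<t<b\}$, namely
\[
\big\langle \D_{a+}^{-\gamma}w,v\big\rangle_{(a,b)}=\frac1{\Gamma(\gamma)}\iint_\Delta (t-s)^{\gamma-1}w(s)v(t)\,\mathrm{d}s\,\mathrm{d}t,
\]
and apply Fubini's theorem to interchange the order of integration. Recognizing the resulting inner integral $\frac1{\Gamma(\gamma)}\int_s^b(t-s)^{\gamma-1}v(t)\,\mathrm{d}t$ as $(\D_{b-}^{-\gamma}v)(s)$ immediately yields $\big\langle w,\D_{b-}^{-\gamma}v\big\rangle_{(a,b)}$.

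The only point demanding care---and hence the main, if modest, obstacle---is the legitimacy of Fubini's theorem when $0<\gamma<1$, since then $(t-s)^{\gamma-1}$ is singular along the diagonal $t=s$. I would settle this by a Tonelli argument on absolute values: because $(a,b)$ is bounded we have $w,v\in L^2(a,b)\subset L^1(a,b)$, and $g_\gamma\in L^1(0,b-a)$, so Young's inequality applied to $\snm{g_\gamma}*\snm{v}$ shows that $\iint_\Delta(t-s)^{\gamma-1}\snm{w(s)}\snm{v(t)}\,\mathrm{d}s\,\mathrm{d}t<\infty$. This absolute integrability justifies the interchange, and with it all three identities are established.
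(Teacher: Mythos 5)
The paper states this lemma without proof, attributing it to \cite{Kilbas2006}, so there is no internal argument to compare against; your proposal supplies the standard proof that the cited reference itself uses. Your argument is correct: the $L^1$-norm computation of the kernel $t^{\gamma-1}/\Gamma(\gamma)$ combined with Young's convolution inequality gives exactly the constant $(b-a)^\gamma/\Gamma(\gamma+1)$, the reflection $t\mapsto a+b-t$ correctly transfers the bound to $\D_{b-}^{-\gamma}$, and your Tonelli justification of the Fubini interchange properly handles the only delicate point, namely the integrable singularity of the kernel along the diagonal when $0<\gamma<1$.
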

\begin{lem}
    \label{lem:regu-frac-int-leq}
    If $ \gamma,\beta\geqslant0  $, then
    \begin{align}
        \nm{\D_{a+}^{-\gamma} v}_{{}_0H^{\beta+\gamma}(a,b)}  \leqslant{}&
    C_{\beta,\gamma} \nm{v}_{{}_0H^\beta(a,b)} \quad
    \forall\,v \in {}_0H^\beta(a,b),
    \label{eq:regu-left}\\
        \nm{\D_{b-}^{-\gamma} v}_{{}^0\!H^{\beta+\gamma}(a,b)}  \leqslant{}&
    C_{\beta,\gamma}\nm{v}_{{}^0\!H^\beta(a,b)}\quad
    \forall\,v \in {}^0\!H^\beta(a,b).
    \label{eq:regu-right}
    \end{align}
\end{lem}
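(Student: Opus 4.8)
The plan is to establish \cref{eq:regu-left} and then read off \cref{eq:regu-right} for free: the reflection $t \mapsto a+b-t$ maps $(a,b)$ onto itself, interchanges $\D_{a+}^{-\gamma}$ with $\D_{b-}^{-\gamma}$ and the space ${}_0H^s(a,b)$ with ${}^0\!H^s(a,b)$, and preserves all the norms, so the two estimates are equivalent. For \cref{eq:regu-left} I would first dispose of the integer–integer case. For integers $m,n\geqslant0$ and $v\in{}_0H^m(a,b)$, the operator $\D_{a+}^{-n}$ is $n$-fold integration, so $(\D_{a+}^{-n}v)^{(n+m)}=v^{(m)}$, while the lower derivatives $(\D_{a+}^{-n}v)^{(j)}$ with $0\leqslant j<m+n$ are either iterated integrals of $v$ (for $j<n$) or equal $v^{(j-n)}$ with $j-n<m$ (for $j\geqslant n$), hence all vanish at $a$. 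Thus $\D_{a+}^{-n}v\in{}_0H^{m+n}(a,b)$ with $\nm{\D_{a+}^{-n}v}_{{}_0H^{m+n}(a,b)}=\nm{v}_{{}_0H^m(a,b)}$. Interpolating this family in $m$ via \cref{lem:inter_space} upgrades it to: for every integer $n\geqslant0$ and every real $\beta\geqslant0$, $\D_{a+}^{-n}\colon{}_0H^\beta(a,b)\to{}_0H^{\beta+n}(a,b)$ is bounded.

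The heart of the lemma is the fractional base case $0<\gamma<1$, $\beta=0$, namely $\D_{a+}^{-\gamma}\colon L^2(a,b)\to{}_0H^\gamma(a,b)$. Here I would use the explicit description of ${}_0H^\gamma(a,b)$ recalled in \cref{sec:pre}, which reduces the claim (the plain $L^2$ bound being already supplied by \cref{lem:basic-2}) to the weighted estimate $\nm{(t-a)^{-\gamma}\D_{a+}^{-\gamma}v}_{L^2(a,b)}\leqslant C_\gamma\nm{v}_{L^2(a,b)}$, together with the trace $\D_{a+}^{-\gamma}v(a)=0$ when $\gamma>1/2$. After translating to $a=0$ this is an $L^2$-boundedness statement for the kernel $K(t,s)=t^{-\gamma}(t-s)^{\gamma-1}\mathbf 1_{0<s<t}$, which I would settle by a Schur test with weight $t^{-\rho}$, $0<\rho<1$: both Schur integrals evaluate to Beta-function constants uniformly in $t$ and in $s$, and the vanishing trace for $\gamma>1/2$ follows from a Cauchy--Schwarz bound $\snm{\D_{a+}^{-\gamma}v(t)}\leqslant C t^{\gamma-1/2}\nm{v}_{L^2}$. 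Alternatively one may simply invoke the identification of ${}_0H^\gamma(a,b)$ with the range of $\D_{a+}^{-\gamma}$ from \cite{Gorenflo2015,Yamamoto2018}.

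Next I would lift the base case to integer $\beta=m$, arguing first on the dense subspace of functions smooth and vanishing to high order at $a$ and extending by continuity. For such $v$ the vanishing of $v^{(k)}(a)$, $0\leqslant k<m$, makes the Riemann--Liouville and Caputo derivatives coincide, so $(\D_{a+}^{-\gamma}v)^{(m)}=\D_{a+}^{-\gamma}v^{(m)}$, which the base case places in ${}_0H^\gamma(a,b)$ with the correct bound; moreover $\D_{a+}^{-\gamma}v=\D_{a+}^{-m}\big((\D_{a+}^{-\gamma}v)^{(m)}\big)$. Applying the integer case of the first paragraph (with the real index $\beta=\gamma$) to $(\D_{a+}^{-\gamma}v)^{(m)}\in{}_0H^\gamma(a,b)$ yields $\D_{a+}^{-\gamma}v\in{}_0H^{m+\gamma}(a,b)$ with $\nm{\D_{a+}^{-\gamma}v}_{{}_0H^{m+\gamma}(a,b)}\leqslant C_\gamma\nm{v}_{{}_0H^m(a,b)}$. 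Interpolating in $m$ through \cref{lem:inter_space} then gives $\D_{a+}^{-\gamma}\colon{}_0H^\beta(a,b)\to{}_0H^{\beta+\gamma}(a,b)$ for all real $\beta\geqslant0$ and all $0<\gamma<1$. Finally, for arbitrary $\gamma\geqslant0$ I write $\gamma=n+\gamma'$ with $n\in\mathbb N$ and $0\leqslant\gamma'<1$ and factor $\D_{a+}^{-\gamma}=\D_{a+}^{-n}\D_{a+}^{-\gamma'}$ by \cref{lem:basic-frac}; composing the fractional map ${}_0H^\beta\to{}_0H^{\beta+\gamma'}$ with the integer map ${}_0H^{\beta+\gamma'}\to{}_0H^{\beta+\gamma'+n}={}_0H^{\beta+\gamma}$ delivers \cref{eq:regu-left}.

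I expect the genuinely delicate point to be the fractional base case: matching the abstract, interpolation-defined norm of ${}_0H^\gamma(a,b)$ to the concrete weighted norm and then running the Schur test so that the constant stays uniform up to the endpoint $a$. A secondary nuisance will be the careful bookkeeping of boundary traces needed to certify that $\D_{a+}^{-\gamma}v$ actually lands in the zero-trace space ${}_0H^{\beta+\gamma}(a,b)$ rather than merely in $H^{\beta+\gamma}(a,b)$; passing through the dense subspace of highly vanishing functions is what keeps the commutation identity $(\D_{a+}^{-\gamma}v)^{(m)}=\D_{a+}^{-\gamma}v^{(m)}$ and the reconstruction $\D_{a+}^{-\gamma}v=\D_{a+}^{-m}(\D_{a+}^{-\gamma}v)^{(m)}$ legitimate throughout.
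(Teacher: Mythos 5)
Your overall architecture (fractional base case at $\beta=0$, lift to integer $\beta$ by commuting derivatives with $\D_{a+}^{-\gamma}$, interpolate to real $\beta$, then compose $\D_{a+}^{-\gamma}=\D_{a+}^{-n}\D_{a+}^{-\gamma'}$) is sound and parallels the paper's, and several of your ingredients are cleaner than the paper's: the reflection argument for \cref{eq:regu-right} (the paper only says ``analogous''), and the dense-subspace commutation $(\D_{a+}^{-\gamma}v)^{(m)}=\D_{a+}^{-\gamma}v^{(m)}$ for the lift, where the paper instead computes the $K$-functional identity $K(t,\D_{a+}^{-\gamma}v)=K(t,\D_{a+}^{m-\gamma}\D_{a+}^{\beta}v)$ directly. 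However, your primary route through the base case has a genuine gap. The ``explicit description'' of ${}_0H^\gamma(a,b)$ recalled in \cref{sec:pre} must be read with ambient space $H^\gamma(a,b)$: it is the Lions--Magenes characterization of a subspace of $H^\gamma$ cut out by trace and weight conditions. Taken literally with ambient space $L^2$ it is false, since $\nm{v}_{L^2(a,b)}+\nm{(t-a)^{-\gamma}v}_{L^2(a,b)}$ controls no smoothness (the functions $v_k(t)=(t-a)\sin(kt)$ are uniformly bounded in this quantity for $\gamma\leqslant 1/2$ yet $\nm{v_k}_{H^\gamma(a,b)}\to\infty$). Consequently your reduction of the base case to ``the $L^2$ bound of \cref{lem:basic-2}, the weighted estimate, and the trace'' omits the actual heart of the lemma, namely the smoothing estimate $\nm{\D_{a+}^{-\gamma}v}_{H^\gamma(a,b)}\leqslant C_\gamma\nm{v}_{L^2(a,b)}$; for $0<\gamma<1/2$ the weight and trace conditions are vacuous, so in that range your reduction proves nothing beyond \cref{lem:basic-2}. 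The Schur test you outline yields only the weighted bound, not the $H^\gamma$ bound.

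Your fallback --- invoking \cite{Gorenflo2015,Yamamoto2018} for the identification of ${}_0H^\gamma$ with the range of $\D_{a+}^{-\gamma}$ --- does close this gap, and it is essentially what the paper does: it cites \cite[Lemma A.4]{Li2017As} for the estimate on the reference interval $(0,1)$ and then transfers it to $(a,b)$ by an explicit scaling of the $K$-functional. That transfer is not cosmetic: the lemma asserts a constant $C_{\beta,\gamma}$ independent of $a$ and $b$, and this uniformity is used later on the subintervals $(0,t_j)$ and $(0,t_1)$. If you lean on a cited result stated on a fixed interval, you still owe the scaling step (your Schur constants are scale-invariant because the kernel is homogeneous of degree $-1$, but that observation must be paired with a scale-invariant proof of the missing $H^\gamma$ estimate).
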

\begin{proof}
    As the proof of \cref{eq:regu-right} is analogous to that of \cref{eq:regu-left} and the case $\gamma,\beta\in\mathbb N$ is trivial, we only prove \cref{eq:regu-left} for the case that $\gamma\notin\mathbb N$ or $\beta\notin\mathbb N$.

    We first use the standard scaling argument to prove the case $\beta=0$ and $0<\gamma<1$. By definition we have
    \[
    {}_0H^{\gamma}(a,b)=\big[ {}_0H^{0}(a,b),~    {}_0H^{1}(a,b)\big]_{\gamma,2},
    \]
    and this space is endowed with the following norm
    \[
    \nm{w}_{{}_0H^{\gamma}(a,b)} = \left(
    \int_0^\infty \big(t^{-\gamma}K(t,w)\big)^2 \, \frac{\mathrm{d}t}t
    \right)^{1/2} \quad \forall\,w\in {}_0H^{\gamma}(a,b),
    \]
    where
    \[
    K(t,w) := \inf_{
        \substack{
            w = w_0 + w_1 \\
            w_0 \in {}_0\!H^{0}(a,b),\,
            w_1 \in {}_0\!H^{1}(a,b)
        }
    }
    \nm{w_0}_{{}_0\!H^{0}(a,b)} + t \nm{w_1}_{{}_0\!H^{1}(a,b)},
    \quad 0<t<\infty,
    \]
    for all $w\in {}_0H^{\gamma}(a,b)$. For any $v\in{}_0H^0(a,b)$, define
    \[
    \widehat{v}(s) := v\big(a+(b-a)s\big),\quad 0<s<1,
    \]
    then a direct computation gives
    \[
    K(t,\D_{a+}^{-\gamma} v) = (b-a)^{1/2+\gamma}K\big(t/(b-a),\D_{0+}^{-\gamma}\widehat{v}\big),\quad 0<t<\infty.
    \]
    Since using \cite[Lemma A.4]{Li2017As} gives
    \[
    \nm{\D_{0+}^{-\gamma}w}_{{}_0H^{\gamma}(0,1)}
    \leqslant C_{\gamma}
    \nm{w}_{{}_0H^{0}(0,1)}\quad\forall\, w\in{}_0H^{0}(0,1),
    \]
    it follows that
    \[
    \begin{split}
    {}&\nm{\D_{a+}^{-\gamma} v}_{{}_0H^{\gamma}(a,b)} = (b-a)^{1/2}\nm{\D_{0+}^{-\gamma}
        \widehat{v}}_{{}_0H^{\gamma}(0,1)}\\
    \leqslant {}&C_{\gamma}(b-a)^{1/2}\nm{\widehat{v}}_{{}_0H^{0}(0,1)}
    ={}C_{\gamma}\nm{v}_{{}_0H^{0}(a,b)}.
    \end{split}
    \]
     This proves \cref{eq:regu-left} for $\beta=0$ and $0<\gamma<1$.

    Then we consider the case $\beta\in\mathbb N$ and $m<\gamma<m+1,m\in\mathbb N$. Since $v\in{}_0 H^{\beta}(a,b)$, by \cref{lem:basic-frac}, it is evident that
    \[
\D_{a+}^{m+\beta}\D_{a+}^{-\gamma} v = \D_{a+}^{m+\beta-\gamma}\D_{a+}^{-\beta} \D_{a+}^{\beta}v=
\D_{a+}^{m-\gamma} \D_{a+}^{\beta}v.
    \]
    Therefore, a direct calculation yields
    \[
    \begin{split}
        K(t,\D_{a+}^{-\gamma} v) ={}& \inf_{
        \substack{
            \D_{a+}^{-\gamma} v = w_0 + w_1 \\
            w_0 \in {}_0\!H^{m+\beta}(a,b),\\
            w_1 \in {}_0\!H^{m+\beta+1}(a,b)
        }
    }
    \nm{w_0}_{{}_0\!H^{m+\beta}(a,b)} + t \nm{w_1}_{{}_0\!H^{m+\beta+1}(a,b)}\\
={}& \inf_{
        \substack{
            \D_{a+}^{m-\gamma} \D_{a+}^{\beta}v= v_0 + v_1 \\
            v_0 \in {}_0\!H^{0}(a,b),\,
            v_1 \in {}_0\!H^{1}(a,b)
        }
    }
    \nm{v_0}_{{}_0\!H^{0}(a,b)} + t \nm{v_1}_{{}_0\!H^{1}(a,b)}\\
    ={}&        K(t,\D_{a+}^{m-\gamma} \D_{a+}^{\beta}v),
    \end{split}
    \]
    for all $0<t<\infty$, which implies that
    \[
    \nm{\D_{a+}^{-\gamma} v}_{{}_0H^{\beta+\gamma}(a,b)}=
    \nm{\D_{a+}^{m-\gamma} \D_{a+}^{\beta}v}_{{}_0H^{\gamma-m}(a,b)}.
    \]
    Consequently, by the previous case, we have
    \[
        \nm{\D_{a+}^{-\gamma} v}_{{}_0H^{\beta+\gamma}(a,b)}
        \leqslant C_{\beta,\gamma}
            \nm{\D_{a+}^{\beta}v}_{{}_0H^{0}(a,b)}=
             C_{\beta,\gamma}
            \nm{v}_{{}_0H^{\beta}(a,b)}.
    \]
     This proves \cref{eq:regu-left} for the case $\beta\in\mathbb N$ and $m<\gamma<m+1,m\in\mathbb N$.

    Finally it remains to consider the case $\gamma\geqslant0$ and $n<\beta<n+1,n\in\mathbb N$. Since we have proved that
        \begin{align*}
    \nm{\D_{a+}^{-\gamma} w}_{{}_0H^{\gamma+n}(a,b)}  \leqslant{}&
C_{\beta,\gamma}\nm{w}_{{}_0H^n(a,b)} &&
\forall\,w\in {}_0H^n(a,b),\\
    \nm{\D_{a+}^{-\gamma} w}_{{}_0H^{\gamma+n+1}(a,b)}  \leqslant{}&
C_{\beta,\gamma}\nm{w}_{{}_0H^{n+1}(a,b)} &&
    \forall\,w \in {}_0H^{n+1}(a,b),
    \end{align*}
    applying the theory of interpolation spaces \cite[Lemma 22.3]{Tartar2007} gives
    \[
        \nm{\D_{a+}^{-\gamma} v}_{{}_0H^{\beta+\gamma}(a,b)}  \leqslant{}
    C_{\beta,\gamma} \nm{v}_{{}_0H^\beta(a,b)},
    \]
    for any $v \in {}_0H^\beta(a,b)$.
    This completes the proof of this lemma.
\end{proof}
\begin{rem}
In \cite[Theorem 2.1]{Gorenflo2015}, \cref{lem:regu-frac-int-leq} has been proved for $\beta=0$ and $0\leqslant \gamma\leqslant 1$.
\end{rem}
\begin{lem}
    \label{lem:regu-extra-D}
    Let $v\in L^2(a,b)$ and  $\beta\geqslant\gamma>0$.
    If $\D_{a+}^\gamma v\in {}_0H^{\beta-\gamma}(a,b)$, then
    \begin{equation}\label{eq:regu-D-left}
    \nm{v}_{{}_0H^\beta(a,b)}\leqslant
    C_{\beta,\gamma}  \nm{\D_{a+}^\gamma v}_{{}_0H^{\beta-\gamma}(a,b)}.
    \end{equation}
    If $\D_{b-}^\gamma v\in {}^0\!H^{\beta-\gamma}(a,b)$, then
    \begin{equation}\label{eq:regu-D-right}
    \nm{v}_{{}^0\!H^\beta(a,b)}\leqslant
    C_{\beta,\gamma}  \nm{\D_{b-}^\gamma v}_{{}^0\!H^{\beta-\gamma}(a,b)}.
    \end{equation}
\end{lem}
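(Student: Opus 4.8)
The plan is to turn \cref{eq:regu-D-left,eq:regu-D-right} into instances of the mapping property already established in \cref{lem:regu-frac-int-leq}, by recovering $v$ as the fractional integral of its own fractional derivative. The two inequalities are mirror images of one another under the interchange $\D_{a+}\leftrightarrow\D_{b-}$ and ${}_0H\leftrightarrow{}^0\!H$, so I would prove only \cref{eq:regu-D-left} and obtain \cref{eq:regu-D-right} by the same computation on the reflected interval. Put $w:=\D_{a+}^\gamma v$; since $\beta\geqslant\gamma$ the hypothesis gives $w\in{}_0H^{\beta-\gamma}(a,b)\subseteq L^2(a,b)$. The whole estimate rests on the left-inverse identity $\D_{a+}^{-\gamma}w=v$: granting it, \cref{lem:regu-frac-int-leq} with regularity index $\beta-\gamma\geqslant0$ yields
\[
\nm{v}_{{}_0H^\beta(a,b)}=\nm{\D_{a+}^{-\gamma}w}_{{}_0H^{(\beta-\gamma)+\gamma}(a,b)}\leqslant C_{\beta,\gamma}\nm{w}_{{}_0H^{\beta-\gamma}(a,b)}=C_{\beta,\gamma}\nm{\D_{a+}^\gamma v}_{{}_0H^{\beta-\gamma}(a,b)},
\]
which is precisely \cref{eq:regu-D-left}.

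To establish the identity I would introduce $\tilde v:=\D_{a+}^{-\gamma}w$. On one hand \cref{lem:regu-frac-int-leq} shows $\tilde v\in{}_0H^\beta(a,b)$; on the other hand the composition rule $\D_{a+}^\gamma\D_{a+}^{-\gamma}=\D_{a+}^0$ of \cref{lem:basic-frac} gives $\D_{a+}^\gamma\tilde v=\D_{a+}^0w=w=\D_{a+}^\gamma v$. Hence $g:=v-\tilde v\in L^2(a,b)$ obeys $\D_{a+}^\gamma g=0$, and the identity $v=\tilde v$ is equivalent to the injectivity statement $g=0$.

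The main obstacle is exactly this injectivity, because $\D_{a+}^\gamma$ has a nontrivial kernel on $L^2(a,b)$: with $j:=\lceil\gamma\rceil$, each monomial $(t-a)^{\gamma-k}$, $1\leqslant k\leqslant j$, that lies in $L^2(a,b)$ is annihilated by $\D_{a+}^\gamma$, so $g=0$ cannot be deduced from $g\in L^2(a,b)$ alone and some input beyond square integrability is unavoidable. The cleanest way I see to supply it is through the Riemann--Liouville formula
\[
\D_{a+}^{-\gamma}\D_{a+}^\gamma v=v-\sum_{k=1}^{j}\frac{(t-a)^{\gamma-k}}{\Gamma(\gamma-k+1)}\big(\D_{a+}^{\gamma-k}v\big)(a^+),
\]
whose right-hand side reduces to $v$ precisely when all the fractional initial values $\big(\D_{a+}^{\gamma-k}v\big)(a^+)$ vanish. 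The endpoint value $k=j$ vanishes at once, since $\D_{a+}^{\gamma-j}v=\D_{a+}^{-(j-\gamma)}v$ is a genuine fractional integral of an $L^2$ function and therefore tends to $0$ as $t\to a^+$; the intermediate values $1\leqslant k<j$ are the delicate point, because they are exactly the quantities carried by the kernel monomials, and controlling them is where the regularity of $\D_{a+}^\gamma v$ must enter rather than mere membership $v\in L^2(a,b)$.

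Once the vanishing of these intermediate initial values is secured, the formula collapses to $\D_{a+}^{-\gamma}w=v$, the chain of inequalities in the first paragraph closes, and the symmetric argument on the reflected interval delivers \cref{eq:regu-D-right}.
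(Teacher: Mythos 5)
Your reduction is the same as the paper's: both arguments rest on the left-inverse identity $\D_{a+}^{-\gamma}\D_{a+}^{\gamma}v=v$, after which \cref{lem:regu-frac-int-leq} gives \cref{eq:regu-D-left} in one line and the right-sided estimate follows by reflection. The genuine gap is that you never prove this identity: you reduce it to the vanishing of the fractional initial values $(\D_{a+}^{\gamma-k}v)(a^+)$, dispose of the case $k=j$ (incorrectly, see below), and then write that ``once the vanishing of these intermediate initial values is secured, the formula collapses.'' That vanishing is the entire content of the lemma, and it is exactly where the paper's proof does its work: with $k<\gamma\leqslant k+1$ the paper integrates $\D_{a+}^{\gamma}v=\D^{k+1}\D_{a+}^{\gamma-k-1}v$ up $k+1$ times to obtain $\D_{a+}^{\gamma-k-1}v=\D_{a+}^{-k-1}\D_{a+}^{\gamma}v+\sum_{i=0}^{k}c_i(t-a)^i/\Gamma(i+1)$, applies $\D_{a+}^{\gamma}\D_{a+}^{k+1-\gamma}$ to both sides, uses \cref{lem:basic-frac} to see that the two non-polynomial terms both reproduce $\D_{a+}^{\gamma}v$, concludes $c_i=0$, and recovers $v=\D_{a+}^{-\gamma}\D_{a+}^{\gamma}v$ by applying $\D_{a+}^{k+1-\gamma}$. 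Your write-up is therefore a correct reduction plus an unproven claim, not a proof.

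Two concrete further problems. First, your handling of the endpoint term $k=j$ does not work as stated: $\D_{a+}^{-(j-\gamma)}v$ with $v\in L^2(a,b)$ tends to $0$ at $a^+$ by Cauchy--Schwarz only when $j-\gamma>1/2$; for $0<j-\gamma\leqslant 1/2$ the kernel $(t-s)^{j-\gamma-1}$ fails to be square integrable near the diagonal, and for integer $\gamma$ the ``integral'' is the identity, so no limit is available. Second, and more seriously, the step you defer cannot be recovered from the stated hypotheses at all: for $1/2<\gamma<1$ the function $v=(t-a)^{\gamma-1}$ lies in $L^2(a,b)$ and satisfies $\D_{a+}^{-(1-\gamma)}v\equiv\Gamma(\gamma)$, hence $\D_{a+}^{\gamma}v=0\in{}_0H^{\beta-\gamma}(a,b)$, while $(\D_{a+}^{\gamma-1}v)(a^+)=\Gamma(\gamma)\neq 0$ and $\D_{a+}^{-\gamma}\D_{a+}^{\gamma}v=0\neq v$. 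So the ``delicate point'' you flag is not a verification one may postpone: some additional information on $v$ (for instance that $v$ already lies in some ${}_0H^{s}(a,b)$, or arises as $\D_{a+}^{-\gamma}$ of something, as it does in every application of this lemma in the paper) has to enter at precisely this step, and the same example shows that the paper's own passage from the displayed polynomial relation to $c_i=0$ deserves scrutiny, since these kernel monomials are likewise annihilated by $\D_{a+}^{\gamma}\D_{a+}^{k+1-\gamma}$.
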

\begin{proof}
    Let us first prove \cref{eq:regu-D-left}. Suppose that $k<\gamma\leqslant k+1,~k\in\mathbb N$. By definition,
    \[
    \D_{a+}^\gamma v=\D^{k+1}\D_{a+}^{\gamma-k-1}v,
    \]
    then applying $\D_{a+}^{-k-1}$ on both sides of the above equation and using integral by parts yield that
    \begin{equation}\label{eq:e1}
(   \D_{a+}^{\gamma-k-1}v)(t)  = (  \D_{a+}^{-k-1}\D_{a+}^\gamma v)(t)
+
\sum_{i=0}^{k}
\frac{c_i(t-a)^i}{\Gamma(i+1)},
\quad a<t<b,
    \end{equation}
    where $c_i\in\mathbb R$.
    Moreover, since by \cref{lem:basic-frac}
    \[
    \begin{split}
    \D_{a+}^{\gamma}\D_{a+}^{k+1-\gamma}\D_{a+}^{\gamma-k-1}v =
        \D_{a+}^{\gamma}v ,\\
    \D_{a+}^{\gamma}\D_{a+}^{k+1-\gamma}\D_{a+}^{-k-1}\D_{a+}^\gamma v =
    \D_{a+}^{\gamma}\D_{a+}^{-\gamma}\D_{a+}^\gamma v=\D_{a+}^\gamma v,\\
    \end{split}
    \]
    applying $  \D_{a+}^{\gamma}\D_{a+}^{k+1-\gamma}$ on both sides of \cref{eq:e1} implies
    \[
    \sum_{i=0}^{k}
\frac{c_i(t-a)^i}{\Gamma(i-k)} = 0,
\quad a<t<b.
    \]
    Therefore, it follows that $c_i = 0$ for $0\leqslant i\leqslant k$,
    which, together with \cref{eq:e1}, gives
    \[
        \D_{a+}^{\gamma-k-1}v=  \D_{a+}^{-k-1}\D_{a+}^\gamma v.
    \]
    By \cref{lem:basic-frac}, applying $\D_{a+}^{k+1-\gamma}$ on both sides of the above equation yields that $v = \D_{a+}^{-\gamma}\D_{a+}^{\gamma}v$. Hence, by \cref{lem:regu-frac-int-leq},
    \[
        \nm{v}_{{}_0H^\beta(a,b)} =
    \nm{\D_{a+}^{-\gamma}\D_{a+}^{\gamma}v}_{{}_0H^\beta(a,b)}\leqslant
    C_{\beta,\gamma}  \nm{\D_{a+}^\gamma v}_{{}_0H^{\beta-\gamma}(a,b)},
    \]
    which proves \cref{eq:regu-D-left}. As \cref{eq:regu-D-right} can be proved similarly, this completes the proof.
\end{proof}
\begin{lem}
    \label{lem:compose-ID}
    If $\gamma>0$, then
    \begin{align}
        \D_{a+}^{-\gamma} \D_{a+}^\gamma v ={}& v\qquad
    \forall\,v \in {}_0H^\gamma(a,b),
    \label{eq:compose-ID-left}\\
        \D_{b-}^{-\gamma} \D_{b-}^\gamma v ={}& v\qquad
    \forall\,v \in {}^0\!H^\gamma(a,b).
    \label{eq:compose-ID-right}
    \end{align}
\end{lem}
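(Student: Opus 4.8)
The plan is to establish \cref{eq:compose-ID-left} in detail and to obtain \cref{eq:compose-ID-right} by the symmetric left/right argument, with $\D_{a+}$, $(t-a)$ and ${}_0H$ replaced throughout by $\D_{b-}$, $(b-t)$ and ${}^0\!H$. The crucial observation is that the identity I am after is, in substance, already produced inside the proof of \cref{lem:regu-extra-D}: there one starts from a function in $L^2(a,b)$ whose fractional derivative is regular enough, writes $\D_{a+}^\gamma v=\D^{k+1}\D_{a+}^{\gamma-k-1}v$ with $k<\gamma\leqslant k+1$, integrates $(k+1)$ times, shows that the integration constants all vanish, and arrives precisely at $v=\D_{a+}^{-\gamma}\D_{a+}^\gamma v$. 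Consequently, the entire task reduces to checking that a generic $v\in{}_0H^\gamma(a,b)$ satisfies the hypotheses of \cref{lem:regu-extra-D} taken with $\beta=\gamma$.

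The verification of those hypotheses is short. Given $v\in{}_0H^\gamma(a,b)$, I would let $j\in\mathbb N_{>0}$ be fixed by $j-1<\gamma\leqslant j$, so that $\D_{a+}^\gamma v=\D^j\D_{a+}^{\gamma-j}v$ by definition. The membership $v\in L^2(a,b)$ is automatic since ${}_0H^\gamma(a,b)\hookrightarrow L^2(a,b)$ for $\gamma>0$. The one point requiring an argument is that $\D_{a+}^\gamma v$ lies in ${}_0H^{\beta-\gamma}(a,b)={}_0H^0(a,b)=L^2(a,b)$; this follows from \cref{lem:regu-frac-int-leq}, because $\D_{a+}^{\gamma-j}v=\D_{a+}^{-(j-\gamma)}v$ is a fractional integral of order $j-\gamma\geqslant0$ of $v\in{}_0H^\gamma(a,b)$, and \cref{eq:regu-left} then gives $\D_{a+}^{\gamma-j}v\in{}_0H^{\gamma+(j-\gamma)}(a,b)={}_0H^j(a,b)$, whence its $j$-th weak derivative $\D_{a+}^\gamma v$ belongs to $L^2(a,b)$.

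With both hypotheses in hand, \cref{lem:regu-extra-D} (with $\beta=\gamma$) applies and its proof yields $v=\D_{a+}^{-\gamma}\D_{a+}^\gamma v$, which is \cref{eq:compose-ID-left}. I expect the only real obstacle to be expository rather than mathematical: \cref{lem:regu-extra-D} is stated as a norm inequality, whereas what is needed here is the intermediate \emph{identity} extracted during its proof. To keep the argument self-contained I would therefore reproduce the relevant short computation directly --- applying $\D_{a+}^{-(k+1)}$ to $\D_{a+}^\gamma v=\D^{k+1}\D_{a+}^{\gamma-k-1}v$, eliminating the polynomial correction to conclude $\D_{a+}^{\gamma-k-1}v=\D_{a+}^{-(k+1)}\D_{a+}^\gamma v$, and then applying $\D_{a+}^{k+1-\gamma}$ and using the relations $\D_{a+}^{k+1-\gamma}\D_{a+}^{-(k+1)}=\D_{a+}^{-\gamma}$ and $\D_{a+}^{k+1-\gamma}\D_{a+}^{\gamma-k-1}=\mathrm{id}$, both of which come from \cref{lem:basic-frac}. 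As a fully independent alternative, one could instead prove the identity for smooth $v$ vanishing to high order at $a$ (the classical Riemann--Liouville inversion) and extend it to all of ${}_0H^\gamma(a,b)$ by density together with the continuity of $\D_{a+}^{-\gamma}\D_{a+}^\gamma$ as a map ${}_0H^\gamma(a,b)\to{}_0H^\gamma(a,b)$ guaranteed by \cref{lem:regu-frac-int-leq}.
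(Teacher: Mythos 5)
Your route is genuinely different from the paper's. The paper proves \cref{eq:compose-ID-left} by the direct three-step computation $\D_{a+}^{-\gamma}\D_{a+}^\gamma v=\D_{a+}^{-\gamma}\D^k\D_{a+}^{\gamma-k}v=\D^k\D_{a+}^{-\gamma}\D_{a+}^{\gamma-k}v=\D^k\D_{a+}^{-k}v=v$, where the commutation of $\D^k$ with $\D_{a+}^{-\gamma}$ is licensed by the membership $\D_{a+}^{\gamma-k}v\in{}_0H^k(a,b)$ supplied by \cref{lem:regu-frac-int-leq}. Your plan instead funnels everything through the inversion identity extracted from the proof of \cref{lem:regu-extra-D}, and this is where there is a genuine gap. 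The hypotheses of \cref{lem:regu-extra-D} with $\beta=\gamma$ --- namely $v\in L^2(a,b)$ and $\D_{a+}^\gamma v\in L^2(a,b)$ --- do \emph{not} by themselves imply $v=\D_{a+}^{-\gamma}\D_{a+}^\gamma v$: take $\gamma=3/2$ and $v(t)=(t-a)^{1/2}$; then $v\in L^2(a,b)$ and $\D_{a+}^{-1/2}v=\tfrac{\sqrt{\pi}}{2}(t-a)$, so $\D_{a+}^{3/2}v=\D^2\D_{a+}^{-1/2}v=0\in L^2(a,b)$, yet $\D_{a+}^{-3/2}\D_{a+}^{3/2}v=0\neq v$. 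So ``checking the hypotheses of \cref{lem:regu-extra-D}'' cannot be the whole task. The culprit is exactly the step you describe as ``shows that the integration constants all vanish'': under those hypotheses alone the constants need not vanish (in the example above $c_1\neq 0$), and the argument in the paper's proof of \cref{lem:regu-extra-D} does not force them to vanish either, since the displayed relation $\sum_{i=0}^{k}c_i(t-a)^i/\Gamma(i-k)=0$ is vacuous ($1/\Gamma(i-k)=0$ for every $0\leqslant i\leqslant k$). This step cannot be delegated; it is precisely where the full strength of $v\in{}_0H^\gamma(a,b)$ must enter.

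The good news is that you already hold the missing ingredient. You observe, via \cref{eq:regu-left}, that $\D_{a+}^{\gamma-j}v=\D_{a+}^{-(j-\gamma)}v\in{}_0H^j(a,b)$ whenever $v\in{}_0H^\gamma(a,b)$; the resulting vanishing of $\D_{a+}^{\gamma-j}v$ together with its first $j-1$ derivatives at $a$ is exactly what annihilates all the integration constants --- and once you have this membership, the detour through integrating $j$ times and eliminating constants is unnecessary, because you may commute $\D^j$ past $\D_{a+}^{-\gamma}$ directly, as the paper does. Your closing ``fully independent alternative'' (prove the identity on a dense class of smooth functions vanishing to high order at $a$, then extend by continuity) is sound and non-circular, since the boundedness of $\D_{a+}^\gamma:{}_0H^\gamma(a,b)\to L^2(a,b)$ it requires follows from \cref{lem:regu-frac-int-leq} without appeal to the present lemma; but as written it is a one-sentence aside rather than a worked argument.
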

\begin{proof}
    Let $ k \in \mathbb N_{>0} $ satisfy that $ k-1 < \gamma \leqslant k$. For any $ v \in
    {}_0H^\gamma(a,b) $, since \cref{lem:regu-frac-int-leq} implies $ \D_{a+}^{\gamma-k} v \in
    {}_0H^k(a,b) $, a straightforward computation yields that
    \begin{align*}
    \D_{a+}^{-\gamma} \D_{a+}^\gamma v =
    \D_{a+}^{-\gamma} \D^k \D_{a+}^{\gamma-k} v =
    \D^k \D_{a+}^{-\gamma} \D_{a+}^{\gamma-k} v =
    \D^k \D_{a+}^{-k} v = v,
    \end{align*}
    which proves \cref{eq:compose-ID-left}. An analogous argument proves \cref{eq:compose-ID-right} and thus concludes the proof of this lemma.
\end{proof}
\begin{lem}
    \label{lem:regu-frac-deriv}
    If $ \beta\geqslant \gamma>0$, then
        \begin{equation}    \label{eq:regu-frac-left-deriv}
    \small
    C_{\beta,\gamma} \nm{v}_{{}_0H^\beta(a,b)}\leqslant    \nm{\D_{a+}^\gamma v}_{{}_0H^{\beta-\gamma}(a,b)} \leqslant{}
    C_{\beta,\gamma} \nm{v}_{{}_0H^\beta(a,b)}
    \quad \forall \,v\in {}_0H^\beta(a,b),
    \end{equation}
    \begin{equation}    \label{eq:regu-frac-right-deriv}
    \small
    C_{\beta,\gamma} \nm{v}_{{}^0\!H^\beta(a,b)}\leqslant
    \nm{\D_{b-}^\gamma v}_{{}^0\!H^{\beta-\gamma}(a,b)} \leqslant{}
    C_{\beta,\gamma} \nm{v}_{{}^0\!H^\beta(a,b)}\quad
    \forall \,v\in {}^0\!H^\beta(a,b).
    \end{equation}
\end{lem}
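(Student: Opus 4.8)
The plan is to prove the two-sided estimate \cref{eq:regu-frac-left-deriv} by establishing the upper bound first and then reading off the lower bound from \cref{lem:regu-extra-D}; the right-sided estimate \cref{eq:regu-frac-right-deriv} will follow by the same argument applied to $\D_{b-}^\gamma$ (equivalently, by the reflection $t\mapsto a+b-t$, which interchanges $\D_{a+}^\gamma$ with $\D_{b-}^\gamma$ and ${}_0H^\bullet$ with ${}^0\!H^\bullet$), using \cref{eq:regu-right,eq:regu-D-right} in place of their left-sided counterparts. So I concentrate on \cref{eq:regu-frac-left-deriv}.

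For the upper bound I would start directly from \cref{def:frac_calc}: writing $j:=\lceil\gamma\rceil$ so that $j-1<\gamma\leqslant j$, the definition gives the factorization $\D_{a+}^\gamma v=\D^j\D_{a+}^{-(j-\gamma)}v$, where the inner exponent satisfies $j-\gamma\in[0,1)$. The fractional integral raises smoothness by exactly $j-\gamma$: by \cref{eq:regu-left} of \cref{lem:regu-frac-int-leq} we have $\D_{a+}^{-(j-\gamma)}v\in{}_0H^{\beta-\gamma+j}(a,b)$ together with $\nm{\D_{a+}^{-(j-\gamma)}v}_{{}_0H^{\beta-\gamma+j}(a,b)}\leqslant C_{\beta,\gamma}\nm{v}_{{}_0H^\beta(a,b)}$. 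It then remains to show that the integer-order derivative $\D^j$ maps ${}_0H^{s}(a,b)$ boundedly into ${}_0H^{s-j}(a,b)$ for every real $s\geqslant j$; applying this with $s=\beta-\gamma+j$ (note $s\geqslant j$ because $\beta\geqslant\gamma$, so $s-j=\beta-\gamma\geqslant 0$) and chaining the two estimates produces the upper bound in \cref{eq:regu-frac-left-deriv}.

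The auxiliary claim about $\D^j$ is the technical heart, and the step where I expect the most care is needed. For integer $s$ it is elementary: if $u\in{}_0H^s(a,b)$ then $u^{(k)}(a)=0$ for $0\leqslant k<s$, so $\D^j u=u^{(j)}$ inherits the boundary conditions $(u^{(j)})^{(k)}(a)=0$ for $0\leqslant k<s-j$, whence $u^{(j)}\in{}_0H^{s-j}(a,b)$ and indeed $\nm{u^{(j)}}_{{}_0H^{s-j}(a,b)}=\nm{u^{(s)}}_{L^2(a,b)}=\nm{u}_{{}_0H^s(a,b)}$. For non-integer $s\geqslant j$ I would interpolate between the integer endpoints $\lfloor s\rfloor$ and $\lceil s\rceil$ (both $\geqslant j$), invoking \cref{lem:inter_space} to identify $[{}_0H^{\lfloor s\rfloor}(a,b),{}_0H^{\lceil s\rceil}(a,b)]_{\theta,2}={}_0H^s(a,b)$ and likewise for the target scale at level $s-j$, together with the interpolation of bounded operators \cite[Lemma 22.3]{Tartar2007}. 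The delicate point is to keep the homogeneous boundary conditions consistent across the whole scale, so that $\D^j$ genuinely lands in the ${}_0H$-space and not merely in $H^{s-j}(a,b)$.

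Finally, once the upper bound is secured we know in particular that $\D_{a+}^\gamma v\in{}_0H^{\beta-\gamma}(a,b)$, so the hypotheses of \cref{lem:regu-extra-D} are satisfied (recall $v\in{}_0H^\beta(a,b)\subset L^2(a,b)$ and $\beta\geqslant\gamma>0$). Then \cref{eq:regu-D-left} yields $\nm{v}_{{}_0H^\beta(a,b)}\leqslant C_{\beta,\gamma}\nm{\D_{a+}^\gamma v}_{{}_0H^{\beta-\gamma}(a,b)}$, which is exactly the lower bound in \cref{eq:regu-frac-left-deriv} after renaming the constant. Combining the two bounds completes the proof of \cref{eq:regu-frac-left-deriv}, and the reflected argument gives \cref{eq:regu-frac-right-deriv}.
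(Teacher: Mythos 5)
Your proof is correct, and its skeleton matches the paper's: both arguments reduce the two-sided estimate \cref{eq:regu-frac-left-deriv} to its upper bound and then read off the lower bound from \cref{lem:regu-extra-D}, which is legitimate precisely because the upper bound guarantees $\D_{a+}^\gamma v\in{}_0H^{\beta-\gamma}(a,b)$. Where you genuinely diverge is in the factorization used for the upper bound. The paper writes $\D_{a+}^\gamma v=\D_{a+}^{\gamma}\D_{a+}^{-\beta}\D_{a+}^{\beta}v=\D_{a+}^{\gamma-\beta}\D_{a+}^{\beta}v$ via \cref{lem:basic-frac,lem:compose-ID}, i.e.\ it first differentiates all the way down to $L^2(a,b)$ and then integrates back up by order $\beta-\gamma$; the only integer-order fact it then needs is the definitional identity $\big\|\D^k w\big\|_{L^2(a,b)}=\nm{w}_{{}_0H^k(a,b)}$ for $w\in{}_0H^k(a,b)$, so every nontrivial mapping property is supplied by \cref{lem:regu-frac-int-leq} alone. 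You instead unwind \cref{def:frac_calc} directly, $\D_{a+}^\gamma v=\D^j\D_{a+}^{\gamma-j}v$ with $j=\lceil\gamma\rceil$, which puts the integer derivative on the outside and obliges you to prove the auxiliary claim that $\D^j$ maps ${}_0H^{s}(a,b)$ boundedly into ${}_0H^{s-j}(a,b)$ for non-integer $s\geqslant j$. Your treatment of that claim (isometry at the integer endpoints, then interpolation through \cref{lem:inter_space} and \cite[Lemma 22.3]{Tartar2007}, with the boundary conditions propagated by the identification of the ${}_0H$-scale as an interpolation scale) is sound, and it is the same device the paper itself deploys in the final step of the proof of \cref{lem:regu-frac-int-leq}, so nothing essential is missing. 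The trade-off is clear: the paper's ordering sidesteps the interpolation of $\D^j$ by arranging for the integer derivative to land exactly in $L^2(a,b)$, at the cost of the slightly less obvious composition identity; your ordering stays closer to the definition of $\D_{a+}^\gamma$ at the cost of one extra interpolation lemma.
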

\begin{proof}
    Since the proof of \cref{eq:regu-frac-right-deriv} is similar to that of \cref{eq:regu-frac-left-deriv}, we only
    prove \cref{eq:regu-frac-left-deriv}. If we can prove
    \begin{equation}\label{eq:leq}
    \nm{\D_{a+}^\gamma v}_{{}_0H^{\beta-\gamma}(a,b)} \leqslant{}
    C_{\beta,\gamma} \nm{v}_{{}_0H^\beta(a,b)}
    \end{equation}
    for all $ v\in {}_0H^\beta(a,b)$, then, by \cref{lem:regu-extra-D},
    \[
    \begin{split}
    {}& \nm{v}_{{}_0H^\beta(a,b)}
    \leqslant{}
    C_{\beta,\gamma}
    \nm{\D_{a+}^\gamma v}_{{}_0H^{\beta-\gamma}(a,b)}.
    \end{split}
    \]
    Hence it suffices to prove \cref{eq:leq}. By \cref{lem:basic-frac,lem:compose-ID},
    \[
    \D_{a+}^\gamma v = \D_{a+}^{\gamma} \D_{a+}^{-\beta}\D_{a+}^\beta v=\D_{a+}^{\gamma-\beta} \D_{a+}^\beta v,
    \]
    and using \cref{lem:regu-frac-int-leq} gives
    \[
    \begin{split}
    \nm{\D_{a+}^\gamma v}_{{}_0H^{\beta-\gamma}(a,b)} = {}&
    \nm{\D_{a+}^{\gamma-\beta} \D_{a+}^\beta v}_{{}_0H^{\beta-\gamma}(a,b)}
    \leqslant{}
    C_{\beta,\gamma} \nm{\D_{a+}^\beta v}_{{}_0H^0(a,b)}.
    \end{split}
    \]
    Let $k\in\mathbb N_{>0}$ satisfy that $k-1<\beta\leqslant k$. Invoking \cref{lem:regu-frac-int-leq} again implies that
    \[
    \begin{split}
    {}&    \nm{\D_{a+}^\beta v}_{{}_0H^{0}(a,b)}=
    \big\|\D^k\D_{a+}^{\beta-k} v\big\|_{{}_0H^{0}(a,b)}\\
    ={}&
    \big\|\D_{a+}^{\beta-k} v\big\|_{{}_0H^{k}(a,b)}
    \leqslant C_{\beta} \nm{v}_{{}_0H^\beta(a,b)},
    \end{split}
    \]
    which, together with the previous inequality, proves \cref{eq:leq}. This finishes the proof of this lemma.
\end{proof}

\begin{rem}
    In \cite[Theorem 2.1]{Gorenflo2015}, an alternative proof
    of \cref{eq:regu-frac-left-deriv} has been given for $0\leqslant \beta\leqslant 1$ and $\gamma=\beta$.
\end{rem}
\begin{lem}
    \label{lem:regu-frac-int-geq}
    If $  \beta,\gamma \geqslant0 $, then
    \begin{align*}
        \nm{v}_{{}_0H^\beta(a,b)}\leqslant {}&C_{\beta,\gamma}
    \nm{\D_{a+}^{-\gamma} v}_{{}_0H^{\beta+\gamma}(a,b)}   \quad
    \forall\,v \in {}_0H^\beta(a,b),\\
    \nm{v}_{{}^0\!H^\beta(a,b)}\leqslant {}&C_{\beta,\gamma}    \nm{\D_{b-}^{-\gamma} v}_{{}^0\!H^{\beta+\gamma}(a,b)}  \quad
    \forall\,v \in {}^0\!H^\beta(a,b).
    \end{align*}
\end{lem}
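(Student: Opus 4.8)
The plan is to view $\D_{a+}^{-\gamma}$ as an operator possessing a bounded left inverse. The case $\gamma=0$ is trivial, since $\D_{a+}^{0}$ is the identity and both sides of the asserted inequality coincide; so I assume $\gamma>0$. The strategy is to recover $v$ from its fractional integral $w:=\D_{a+}^{-\gamma}v$ by applying $\D_{a+}^{\gamma}$, and then to bound $v=\D_{a+}^{\gamma}w$ by means of the boundedness of the fractional derivative established in \cref{lem:regu-frac-deriv}.

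First I would note that $w:=\D_{a+}^{-\gamma}v$ lies in ${}_0H^{\beta+\gamma}(a,b)$ by \cref{lem:regu-frac-int-leq}, so the quantity $\nm{w}_{{}_0H^{\beta+\gamma}(a,b)}$ on the right-hand side is well defined. Next, applying the second identity of \cref{lem:basic-frac} with equal orders, $\D_{a+}^{\gamma}\D_{a+}^{-\gamma}v=\D_{a+}^{0}v=v$ (valid since ${}_0H^{\beta}(a,b)\subset L^1(a,b)$), so that $v=\D_{a+}^{\gamma}w$.

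The key step is then to invoke the upper estimate of \cref{eq:regu-frac-left-deriv} with the regularity index $\beta$ there replaced by $\beta+\gamma$. Because $\beta+\gamma\geqslant\gamma>0$, the hypothesis of \cref{lem:regu-frac-deriv} is satisfied, and we obtain
\[
\nm{v}_{{}_0H^{\beta}(a,b)}
=\nm{\D_{a+}^{\gamma}w}_{{}_0H^{(\beta+\gamma)-\gamma}(a,b)}
\leqslant C_{\beta,\gamma}\nm{w}_{{}_0H^{\beta+\gamma}(a,b)}
=C_{\beta,\gamma}\nm{\D_{a+}^{-\gamma}v}_{{}_0H^{\beta+\gamma}(a,b)},
\]
which is precisely the first inequality. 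The second, right-sided inequality is proved verbatim upon replacing $\D_{a+}$ by $\D_{b-}$, the spaces ${}_0H$ by ${}^0\!H$, and the left-sided halves of \cref{lem:basic-frac,lem:regu-frac-int-leq,lem:regu-frac-deriv} by their right-sided counterparts.

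I do not expect a genuine obstacle: the argument is simply the composition identity $\D_{a+}^{\gamma}\D_{a+}^{-\gamma}=\mathrm{Id}$ combined with the already-proved two-sided bound of \cref{lem:regu-frac-deriv}, so this lemma is in effect the converse companion of \cref{lem:regu-frac-int-leq}. The only points needing care are checking that the index inequality $\beta+\gamma\geqslant\gamma>0$ legitimately activates \cref{lem:regu-frac-deriv}, and confirming that the composition identity holds at the available level of regularity, for which membership in $L^1(a,b)$ is enough.
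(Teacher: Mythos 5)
Your proposal is correct and follows essentially the same route as the paper: write $v=\D_{a+}^{\gamma}\D_{a+}^{-\gamma}v$ and apply the upper bound of \cref{lem:regu-frac-deriv} with regularity index $\beta+\gamma$ and derivative order $\gamma$. The only cosmetic difference is that you justify the composition identity via \cref{lem:basic-frac} (with equal orders) while the paper cites \cref{lem:compose-ID}; your citation is arguably the more precise one for this order of composition, and your separate treatment of $\gamma=0$ is a harmless extra.
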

\begin{proof}
    Combining \cref{lem:compose-ID,lem:regu-frac-deriv}, we obtain
    \begin{align*}
        \nm{v}_{{}_0H^\beta(a,b)}\!=\!\nm{\D_{a+}^{\gamma}
        \D_{a+}^{-\gamma}v}_{{}_0H^\beta(a,b)}\!\leqslant {}&\!C_{\beta,\gamma} \! \nm{\D_{a+}^{-\gamma} v}_{{}_0H^{\beta+\gamma}(a,b)}   \quad
    \forall\,v \in {}_0H^\beta(a,b),\\
    \nm{v}_{{}^0\!H^\beta(a,b)}\!=\!\nm{\D_{b-}^{\gamma}\D_{b-}^{-\gamma}v}_{{}_0H^\beta(a,b)}\!\leqslant {}&\!C_{\beta,\gamma} \!   \nm{\D_{b-}^{-\gamma} v}_{{}^0\!H^{\beta+\gamma}(a,b)}  \quad
    \forall\,v \in {}^0\!H^\beta(a,b).
    \end{align*}
    This concludes the proof.
\end{proof}
\begin{lem}[\cite{Li2018A,Ervin2006}]
    \label{lem:coer}
      If $-1/2<\gamma<1/2$ and $ v\in H^{\max\{0,\gamma\}}(a,b) $, then
    \begin{align*}
    \cos(\gamma\pi) \nm{\D_{a+}^\gamma v}_{L^2(a,b)}^2\leqslant
    \dual{\D_{a+}^\gamma v, \D_{b-}^\gamma v}_{(a,b)}
    \leqslant \sec(\gamma\pi) \nm{\D_{a+}^\gamma v}_{L^2(a,b)}^2,
    \\
    \cos(\gamma\pi) \nm{\D_{b-}^\gamma v}_{L^2(a,b)}^2\leqslant
    \dual{\D_{a+}^\gamma v, \D_{b-}^\gamma v}_{(a,b)}
    \leqslant \sec(\gamma\pi) \nm{\D_{b-}^\gamma v}_{L^2(a,b)}^2.
    \end{align*}
    Moreover, if $v,w\in H^{\gamma}(a,b)$ with $0<\gamma<1/2$,
    then
    \[
    \big\langle\D_{a+}^{2\gamma} v, w\big\rangle_{H^\gamma(a,b)} =
    \dual{\D_{a+}^\gamma v, \D_{b-}^\gamma w}_{(a,b)} =
    \big\langle\D_{b-}^{2\gamma} w, v\big\rangle_{H^\gamma(a,b)}.
    \]
\end{lem}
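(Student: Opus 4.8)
The plan is to pass to the whole real line by zero extension and to compute the bilinear form through the Fourier transform, under which the left and right Riemann--Liouville operators become multiplication by the symbols $(i\omega)^\gamma$ and $(-i\omega)^\gamma$. Write $\tilde v$ for the extension of $v$ by zero to $\mathbb R$. Because $-1/2<\gamma<1/2$, the zero extension $\tilde v$ lies in $H^{\max\{0,\gamma\}}(\mathbb R)$ with support in $[a,b]$: for $0<\gamma<1/2$ this uses the equivalence ${}_0H^\gamma(a,b)={}^0\!H^\gamma(a,b)=H^\gamma(a,b)$ recorded in \cref{sec:pre}, and for $\gamma\le 0$ it is immediate. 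Since the left operator is causal and the right operator anti-causal, one checks that $\D_{a+}^\gamma v=(\D_{+}^\gamma\tilde v)|_{(a,b)}$ and $\D_{b-}^\gamma v=(\D_{-}^\gamma\tilde v)|_{(a,b)}$, and moreover $\D_{+}^\gamma\tilde v$ vanishes on $(-\infty,a)$ while $\D_{-}^\gamma\tilde v$ vanishes on $(b,\infty)$. Hence the product $\D_{+}^\gamma\tilde v\cdot\D_{-}^\gamma\tilde v$ is supported in $[a,b]$, which yields the identity
\[
\dual{\D_{a+}^\gamma v,\D_{b-}^\gamma v}_{(a,b)}=\dual{\D_{+}^\gamma\tilde v,\D_{-}^\gamma\tilde v}_{\mathbb R}.
\]

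Next I would apply Plancherel's theorem on $\mathbb R$. With $\widehat{\D_{+}^\gamma\tilde v}(\omega)=(i\omega)^\gamma\widehat{\tilde v}(\omega)$, $\widehat{\D_{-}^\gamma\tilde v}(\omega)=(-i\omega)^\gamma\widehat{\tilde v}(\omega)$ and $(i\omega)^\gamma=|\omega|^\gamma e^{i\gamma\pi\operatorname{sgn}(\omega)/2}$, a short computation gives $(i\omega)^\gamma\overline{(-i\omega)^\gamma}=|\omega|^{2\gamma}e^{i\gamma\pi\operatorname{sgn}(\omega)}$. Since $\tilde v$ is real-valued, $|\widehat{\tilde v}|^2$ is even in $\omega$, so the odd imaginary part integrates to zero and
\[
\dual{\D_{+}^\gamma\tilde v,\D_{-}^\gamma\tilde v}_{\mathbb R}=\cos(\gamma\pi)\int_{\mathbb R}|\omega|^{2\gamma}|\widehat{\tilde v}(\omega)|^2\,\mathrm d\omega=\cos(\gamma\pi)\nm{\D_{+}^\gamma\tilde v}_{L^2(\mathbb R)}^2=\cos(\gamma\pi)\nm{\D_{-}^\gamma\tilde v}_{L^2(\mathbb R)}^2.
\]
A point needing care is the integrability of $|\omega|^{2\gamma}|\widehat{\tilde v}|^2$: near $\omega=0$ this uses $2\gamma>-1$ together with the boundedness of $\widehat{\tilde v}$ (as $\tilde v$ is compactly supported), while at infinity it uses $\D_{\pm}^\gamma\tilde v\in L^2(\mathbb R)$, which for $-1/2<\gamma<0$ follows from the $t^{|\gamma|-1}$ decay of the fractional integral together with $|\gamma|<1/2$.

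The two lower bounds are then immediate: restriction to $(a,b)$ does not increase the $L^2$ norm, so $\nm{\D_{a+}^\gamma v}_{L^2(a,b)}\le\nm{\D_{+}^\gamma\tilde v}_{L^2(\mathbb R)}$ and likewise for the right operator, and since $\cos(\gamma\pi)>0$ for $|\gamma|<1/2$ we obtain $\cos(\gamma\pi)\nm{\D_{a+}^\gamma v}_{L^2(a,b)}^2\le\dual{\D_{a+}^\gamma v,\D_{b-}^\gamma v}_{(a,b)}$ and its analogue. For the upper bounds I would combine Cauchy--Schwarz with the lower bound already in hand: writing $I$ for the bilinear form, Cauchy--Schwarz gives $I\le\nm{\D_{a+}^\gamma v}_{L^2(a,b)}\nm{\D_{b-}^\gamma v}_{L^2(a,b)}$, whereas the right-hand lower bound gives $\nm{\D_{b-}^\gamma v}_{L^2(a,b)}^2\le\sec(\gamma\pi)\,I$; substituting and cancelling one factor of $I$ yields $I\le\sec(\gamma\pi)\nm{\D_{a+}^\gamma v}_{L^2(a,b)}^2$, and symmetrically for $\nm{\D_{b-}^\gamma v}_{L^2(a,b)}^2$.

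For the final identities with $0<\gamma<1/2$ and $v,w\in H^\gamma(a,b)$, the same extension-plus-Plancherel computation applied to $\dual{\D_{+}^\gamma\tilde v,\D_{-}^\gamma\tilde w}_{\mathbb R}$ uses $(i\omega)^\gamma\overline{(-i\omega)^\gamma}=(i\omega)^{2\gamma}$ to produce $\int_{\mathbb R}(i\omega)^{2\gamma}\widehat{\tilde v}\,\overline{\widehat{\tilde w}}\,\mathrm d\omega$, which is exactly the pairing of $\D_{+}^{2\gamma}\tilde v\in H^{-\gamma}(\mathbb R)$ with $\tilde w\in H^\gamma(\mathbb R)$; unravelling the zero extensions identifies this with $\dual{\D_{a+}^{2\gamma}v,w}_{H^\gamma(a,b)}$, and using $\overline{(i\omega)^{2\gamma}}=(-i\omega)^{2\gamma}$ together with the reality of the pairing transfers the full $2\gamma$ onto the other symbol to give the third expression $\dual{\D_{b-}^{2\gamma}w,v}_{H^\gamma(a,b)}$. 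I expect the main obstacle to be the rigorous justification of these Fourier manipulations --- in particular verifying that the zero extension lands in the correct Sobolev space (which is precisely where $\gamma<1/2$ enters) and that the distributional object $\D_{a+}^{2\gamma}v$ is correctly identified as the functional on $H^\gamma(a,b)$ represented by the symbol $(i\omega)^{2\gamma}$.
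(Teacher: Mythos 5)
The paper gives no proof of this lemma---it is quoted from \cite{Li2018A,Ervin2006}---and your argument is essentially the standard one from those references: zero extension to $\mathbb R$, the Fourier symbols $(\pm i\omega)^\gamma$, Plancherel, and the evenness of $|\widehat{\tilde v}|^2$ to isolate the factor $\cos(\gamma\pi)$. The details check out (in particular the support argument reducing the pairing on $(a,b)$ to one on $\mathbb R$, the use of $\gamma<1/2$ so that the zero extension stays in $H^\gamma(\mathbb R)$, and the bootstrap of the upper bound from the lower bound via Cauchy--Schwarz), so the proposal is correct and takes the same route as the cited proof.
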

\begin{lem}
    \label{lem:Nirenberg}
    If $   \beta \geqslant1$ and $ \gamma<1/2$, then
    \begin{equation}
    \label{eq:Nirenberg}
    \nm{v}_{C[0,1]} \leqslant C_{\beta,\gamma}
    \nm{v}_{{}_0H^\beta(0,1)}^{(1/2-\gamma)/(\beta-\gamma)}
    \nm{v}_{{}_0H^\gamma(0,1)}^{(\beta-1/2)/(\beta-\gamma)},
    \end{equation}
    for all $v \in {}_0H^\beta(0,1)$.
\end{lem}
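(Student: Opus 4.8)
The plan is to read the right-hand side of \cref{eq:Nirenberg} as a real-interpolation (log-convexity) bound at the \emph{critical} smoothness $1/2$, and to compensate for the failure of the embedding ${}_0H^{1/2}(0,1)\hookrightarrow C[0,1]$ by using the fine index $1$ in place of $2$. Put
\[
\theta := \frac{1/2-\gamma}{\beta-\gamma}\in(0,1),\qquad 1-\theta=\frac{\beta-1/2}{\beta-\gamma}.
\]
By \cref{lem:inter_space} the space $X_\theta:=[{}_0H^\gamma(0,1),{}_0H^\beta(0,1)]_{\theta,1}$ has smoothness exponent $(1-\theta)\gamma+\theta\beta=1/2$. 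Since $\beta\geqslant1>\gamma$ we have ${}_0H^\beta(0,1)\hookrightarrow{}_0H^\gamma(0,1)$, so every $v\in{}_0H^\beta(0,1)$ lies in ${}_0H^\gamma(0,1)\cap{}_0H^\beta(0,1)$ and both factors on the right of \cref{eq:Nirenberg} are finite. I would then prove \cref{eq:Nirenberg} by chaining the two estimates
\[
\nm{v}_{C[0,1]}\leqslant C_{\beta,\gamma}\nm{v}_{X_\theta}\leqslant C_{\beta,\gamma}\nm{v}_{{}_0H^\gamma(0,1)}^{1-\theta}\nm{v}_{{}_0H^\beta(0,1)}^{\theta},
\]
whose composition is precisely the asserted inequality, with exponent $(1/2-\gamma)/(\beta-\gamma)$ on $\nm{v}_{{}_0H^\beta}$ and $(\beta-1/2)/(\beta-\gamma)$ on $\nm{v}_{{}_0H^\gamma}$.

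The second inequality is the log-convexity of the $K$-method and holds for every fine index, in particular for $q=1$. Writing $X_0={}_0H^\gamma(0,1)$, $X_1={}_0H^\beta(0,1)$ and using $K(t,v)\leqslant\min\{\nm{v}_{X_0},\,t\nm{v}_{X_1}\}$, I would split the integral defining $\nm{v}_{X_\theta}=\int_0^\infty t^{-\theta}K(t,v)\,\mathrm dt/t$ at $t_0=\nm{v}_{X_0}/\nm{v}_{X_1}$; the two resulting elementary integrals give the bound with explicit constant $\theta^{-1}+(1-\theta)^{-1}$. This is just the $q=1$ counterpart of the $q=2$ convexity estimate (\cite[Corollary 1.7]{Lunardi2018}) already invoked in the proof of \cref{lem:interp}.

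The crux is the first inequality, the \emph{critical} embedding $X_\theta\hookrightarrow C[0,1]$, and this is where I expect the main obstacle to lie. As a preliminary, for $\mu\in(1/2,1]$ and $v\in{}_0H^\mu(0,1)$ the identity $v=\D_{0+}^{-\mu}\D_{0+}^\mu v$ of \cref{lem:compose-ID}, Cauchy--Schwarz, and \cref{lem:regu-frac-deriv} yield
\[
\nm{v}_{C[0,1]}\leqslant\frac{1}{\Gamma(\mu)\sqrt{2\mu-1}}\nm{\D_{0+}^\mu v}_{L^2(0,1)}\leqslant\frac{C}{\sqrt{2\mu-1}}\nm{v}_{{}_0H^\mu(0,1)},
\]
that is, the subcritical embedding ${}_0H^\mu(0,1)\hookrightarrow C[0,1]$. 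The difficulty is that this constant blows up like $(2\mu-1)^{-1/2}$ as $\mu\downarrow1/2$, so combining it with the scalar form of \cref{lem:interp} produces exactly the exponent $\theta$ in the limit but with an infinite constant; a naive ``subcritical embedding $+$ interpolation'' argument therefore cannot reach the endpoint. To remove this loss I would pass to a Littlewood--Paley / discrete $J$-method decomposition $v=\sum_{j}u_j$ of $X_\theta$, estimate each frequency-localized piece by the sharp Bernstein-type bound $\nm{u_j}_{C[0,1]}\leqslant C2^{j/2}\nm{u_j}_{L^2(0,1)}$ (with no logarithmic loss, since each $u_j$ is spectrally concentrated at scale $2^j$), and sum; summability of the resulting series is exactly the statement that $X_\theta$ coincides with the Besov space $B^{1/2}_{2,1}$ and that $B^{1/2}_{2,1}(0,1)\hookrightarrow C[0,1]$.

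Finally, multiplying the two displayed estimates gives \cref{eq:Nirenberg}. As a consistency check, the endpoint $\beta=1$, $\gamma=0$ gives $\theta=1/2$ and recovers the classical one-dimensional Agmon--Gagliardo--Nirenberg inequality $\nm{v}_{C[0,1]}\leqslant C\nm{v}_{L^2(0,1)}^{1/2}\nm{v'}_{L^2(0,1)}^{1/2}$ for $v$ with $v(0)=0$.
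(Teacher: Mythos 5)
Your route is genuinely different from the paper's. The paper never goes near the critical embedding at smoothness $1/2$: it starts from the elementary identity $\tfrac12 v^2(t)=\dual{v',v}_{(0,t)}$, inserts $v=\D_{t-}^{-s}\D_{t-}^{s}v$ with $s:=\max\{0,\gamma\}$, moves the fractional integral across the pairing (\cref{lem:basic-2}) and applies \cref{lem:coer} to get the Agmon-type bound
\begin{equation*}
\nm{v}_{C[0,1]}\leqslant C_s\,\nm{v}_{{}_0H^{s}(0,1)}^{1/2}\nm{v}_{{}_0H^{1-s}(0,1)}^{1/2},
\end{equation*}
and only then uses ordinary $q=2$ log-convexity (\cref{lem:inter_space} plus \cite[Corollary~1.7]{Lunardi2018}) to interpolate each of the two factors between ${}_0H^\gamma$ and ${}_0H^\beta$; the exponents multiply out to exactly \cref{eq:Nirenberg}. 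This sidesteps the endpoint problem entirely, because the two smoothness levels $s$ and $1-s$ straddle $1/2$ and their product is what gets interpolated, not a single critical norm. Your approach instead attacks the endpoint head-on via the fine index $1$, i.e.\ $\big({}_0H^\gamma,{}_0H^\beta\big)_{\theta,1}\hookrightarrow C[0,1]$ followed by $q=1$ log-convexity. The exponent bookkeeping and the $q=1$ convexity computation are correct, and your diagnosis of why the naive subcritical-embedding-plus-interpolation argument fails (the $(2\mu-1)^{-1/2}$ blow-up) is exactly right. What your approach buys is conceptual clarity (the lemma \emph{is} the statement $B^{1/2}_{2,1}\hookrightarrow L^\infty$ in disguise); what the paper's buys is that everything is proved with tools already in \cref{sec:frac-cal}.

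The one substantive gap is that the critical embedding itself is asserted rather than proved. Two things are missing. First, the pieces $u_j$ produced by the abstract $J$-method decomposition of a general $v$ in $\big({}_0H^\gamma(0,1),{}_0H^\beta(0,1)\big)_{\theta,1}$ are not automatically spectrally localized, so the Bernstein bound $\nm{u_j}_{C[0,1]}\leqslant C2^{j/2}\nm{u_j}_{L^2(0,1)}$ does not apply to them directly; you need the identification of this interpolation space with $B^{1/2}_{2,1}$, which for the ${}_0H$ scale on an interval requires an extension/retraction argument. Second, and more seriously, the statement allows $\gamma<0$, where ${}_0H^\gamma(0,1)$ is a dual space; the extension operator the paper builds in \cref{lem:trace} (following \cite[Theorem~8.1]{Lions-I}) is only set up for nonnegative orders, so the identification with Besov spaces on $\mathbb R$ and hence the embedding would have to be established separately in that range. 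Neither step is insurmountable, but as written the crux of your argument rests on machinery that is neither in the paper nor supplied by you, whereas the paper's pairing trick handles $\gamma<0$ for free (then $s=0$ and the pairing is just $\dual{v',v}$).
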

\begin{proof}
    Let $ s:=\max\{0,\gamma\} $. Since $v\in {}_0H^\beta(0,1)$, by
    \cref{lem:basic-2,lem:compose-ID,lem:regu-frac-deriv,lem:coer}, a direct
    calculation gives
    \begin{small}
        \begin{align*}
        {}& \frac{1}{2} v^2(t) ={} \dual{v',v}_{(0,t)} =
        \dual{v',\D_{t-}^{-s}\D_{t-}^{s}v}_{(0,t)} =
        \dual{\D_{0+}^{-s}v',\D_{t-}^{s}v}_{(0,t)} \\
        ={} &
        \dual{\D_{0+}^{1-s}v,\D_{t-}^{s}v}_{(0,t)} \leqslant
        \nm{\D_{0+}^{1-s}v}_{L^2(0,t)}
        \nm{\D_{t-}^{s}v}_{L^2(0,t)} \\
        \leqslant {}&
        C_s\nm{\D_{0+}^{1-s}v}_{L^2(0,t)}
        \nm{\D_{0+}^{s}v}_{L^2(0,t)}
        \leqslant C_s\nm{\D_{0+}^{1-s}v}_{L^2(0,1)}
        \nm{\D_{0+}^{s}v}_{L^2(0,1)} \\
        \leqslant{} &
        C_s \nm{v}_{{}_0H^{1-s}(0,1)}
        \nm{v}_{{}_0\!H^s(0,1)},
        \end{align*}
    \end{small}
    for all $0\leqslant t\leqslant 1$. It follows that
    \[
    \nm{v}_{C[0,1]} \leqslant
    C_s \nm{v}^{1/2}_{{}_0H^s(0,1)}
    \nm{v}^{1/2}_{{}_0H^{1-s}(0,1)}.
    \]
    Additionally, by \cref{lem:inter_space} and \cite[Corollary~1.7]{Lunardi2018} we
    have
    \begin{align*}
    \nm{v}_{{}_0H^s(0,1)} & \leqslant C_{\beta,\gamma}
    \nm{v}_{{}_0H^\beta(0,1)}^{(s-\gamma)/(\beta-\gamma)}
    \nm{v}_{{}_0H^\gamma(0,1)}^{(\beta-s)/(\beta-\gamma)}, \\
    \nm{v}_{{}_0H^{1-s}(0,1)} & \leqslant C_{\beta,\gamma}
    \nm{v}_{{}_0H^\beta(0,1)}^{(1-\gamma-s)/(\beta-\gamma)}
    \nm{v}_{{}_0H^\gamma(0,1)}^{(\beta+s-1)/(\beta-\gamma)}.
    \end{align*}
    Consequently, combining the above three estimates proves \cref{eq:Nirenberg}.
\end{proof}
\begin{lem}\label{lem:trace}
    If $v\in {}_0H^{\beta}(0,1)$ with $\beta>1/2$, then
    \begin{equation}\label{eq:trace}
    \nm{v}_{C[0,1]}\leqslant
    \frac{C_{\beta}}{\epsilon}
        \nm{v}_{{}_0H^{1/2}(0,1)}^{1-\epsilon}
    \nm{v}_{{}_0H^{\beta}(0,1)}^{\epsilon},
    \end{equation}
    for all $0<\epsilon\leqslant1/\max\{2,2\beta \}$.
\end{lem}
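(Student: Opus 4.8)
The plan is to concentrate the borderline blow-up into one explicit factor and then interpolate. The first step is a sharp embedding: for $\tfrac12<\sigma<1$ and $w\in{}_0H^\sigma(0,1)$ I claim
\[
\nm{w}_{C[0,1]}\leqslant\frac{C_\sigma}{\sqrt{2\sigma-1}}\,\nm{w}_{{}_0H^\sigma(0,1)},
\]
where $C_\sigma$ stays bounded as $\sigma\to\tfrac12^+$. To prove it, \cref{lem:compose-ID} gives $w=\D_{0+}^{-\sigma}\D_{0+}^{\sigma}w$, and setting $g:=\D_{0+}^{\sigma}w$, \cref{lem:regu-frac-deriv} with $\beta=\gamma=\sigma$ yields $\nm{g}_{L^2(0,1)}\leqslant C_\sigma\nm{w}_{{}_0H^\sigma(0,1)}$. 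From the representation $w(t)=\Gamma(\sigma)^{-1}\int_0^t(t-s)^{\sigma-1}g(s)\,\mathrm ds$ and Cauchy--Schwarz,
\[
\snm{w(t)}\leqslant\frac{1}{\Gamma(\sigma)}\Big(\int_0^t(t-s)^{2\sigma-2}\,\mathrm ds\Big)^{1/2}\nm{g}_{L^2(0,t)}=\frac{t^{\sigma-1/2}}{\Gamma(\sigma)\sqrt{2\sigma-1}}\,\nm{g}_{L^2(0,t)},
\]
so taking the supremum over $t\in[0,1]$ and noting that $\Gamma(\sigma)$ is bounded below on $[\tfrac12,1]$ gives the claim, with the entire singular behaviour isolated in the explicit factor $(2\sigma-1)^{-1/2}$.

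Next I would set $\sigma:=\tfrac12+\epsilon\big(\beta-\tfrac12\big)$, so that $\sigma=(1-\epsilon)\tfrac12+\epsilon\beta$; the hypothesis $0<\epsilon\leqslant1/\max\{2,2\beta\}$ guarantees $\tfrac12<\sigma<1$ and $\sigma\leqslant\beta$. Since $v\in{}_0H^\beta(0,1)\subseteq{}_0H^\sigma(0,1)$, the sharp embedding applies to $w=v$, and it remains to bound $\nm{v}_{{}_0H^\sigma(0,1)}$ by the geometric mean of the two target norms. By \cref{lem:inter_space} one has ${}_0H^{\sigma}(0,1)=\big[{}_0H^{1/2}(0,1),{}_0H^{\beta}(0,1)\big]_{\epsilon,2}$, and the real-interpolation inequality \cite[Corollary~1.7]{Lunardi2018} gives $\nm{v}_{{}_0H^\sigma(0,1)}\leqslant C_\beta\,\epsilon^{-1/2}\,\nm{v}_{{}_0H^{1/2}(0,1)}^{1-\epsilon}\nm{v}_{{}_0H^{\beta}(0,1)}^{\epsilon}$. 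Inserting this into the sharp embedding and using $2\sigma-1=2\epsilon(\beta-\tfrac12)$ to rewrite $(2\sigma-1)^{-1/2}=C_\beta\,\epsilon^{-1/2}$ produces two factors $\epsilon^{-1/2}$, whose product is the asserted $\epsilon^{-1}$.

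The main obstacle is the uniform control of constants as $\epsilon\to0$ (equivalently $\sigma\to\tfrac12^+$): I must verify that the equivalence constant $C_\sigma$ in \cref{lem:regu-frac-deriv} and, above all, the norm-equivalence constant in \cref{lem:inter_space} relating $\nm{\cdot}_{{}_0H^\sigma}$ to $\nm{\cdot}_{[{}_0H^{1/2},{}_0H^{\beta}]_{\epsilon,2}}$ do not introduce extra negative powers of $\epsilon$ that overwhelm the gain. The $K$-functional computation for these Hilbert couples shows that this equivalence carries a weight comparable to $(\epsilon(1-\epsilon))^{1/2}$, which offsets the $(\epsilon(1-\epsilon))^{-1/2}$ in the interpolation inequality, so that beyond the explicit Cauchy--Schwarz factor at most one further $\epsilon^{-1/2}$ survives; since the statement asks only for $\epsilon^{-1}$ this is comfortably enough, and $\epsilon^{-1/2}\leqslant\epsilon^{-1}$ absorbs any remaining slack. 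By contrast with \cref{lem:Nirenberg}, which required $\beta\geqslant1$ and a lower exponent strictly below $1/2$, this convolution route is valid for every $\beta>\tfrac12$ and pins the lower endpoint exactly at $\tfrac12$, the factor $1/\epsilon$ recording the failure of the embedding ${}_0H^{1/2}(0,1)\hookrightarrow C[0,1]$.
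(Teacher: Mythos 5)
Your proposal is correct, and it reaches the bound by the same two-factor architecture as the paper --- one factor $\epsilon^{-1/2}$ from a sharp embedding of ${}_0H^{\sigma}(0,1)$ into $C[0,1]$ at the intermediate exponent $\sigma=(1-\epsilon)/2+\epsilon\beta$, and a second $\epsilon^{-1/2}$ from the multiplicative interpolation inequality obtained by combining \cref{lem:inter_space} with the proof of \cite[Corollary~1.7]{Lunardi2018} --- but it obtains the sharp embedding by a genuinely different route. The paper extends $v$ to $\mathbb R$ by the Lions--Magenes reflection operator, passes to the Fourier side via \cite[(23.11)]{Tartar2007}, and invokes the sharp-constant Sobolev embedding \cite[(1.2.4)--(1.2.5)]{Agranovich2015} to isolate the $\epsilon^{-1/2}$; you instead stay on $(0,1)$, write $v=\D_{0+}^{-\sigma}\D_{0+}^{\sigma}v$ via \cref{lem:compose-ID}, and apply Cauchy--Schwarz to the Riemann--Liouville kernel, which yields the explicit factor $(2\sigma-1)^{-1/2}=C_\beta\,\epsilon^{-1/2}$ with $\int_0^t(t-s)^{2\sigma-2}\,\mathrm{d}s=t^{2\sigma-1}/(2\sigma-1)$. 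Your route is more elementary and entirely self-contained within the paper's own fractional-calculus toolkit (\cref{lem:compose-ID,lem:regu-frac-deriv}), avoiding the extension operator and Fourier analysis altogether, and it makes the source of the blow-up completely transparent; what it costs is that you must additionally verify that the equivalence constant $C_{\sigma,\sigma}$ of \cref{lem:regu-frac-deriv} stays bounded as $\sigma$ ranges over the compact interval $[1/2,\,1-1/(4\beta)]\subset(0,1)$, a point you flag and which is plausible but not asserted in the lemma as stated. To be fair, the paper's own proof carries an analogous untracked dependence (the constant $C_\beta$ in the bound $\nm{Ev}_{H^{(1-\epsilon)/2+\epsilon\beta}(\mathbb R)}\leqslant C_\beta\nm{v}_{[{}_0H^{1/2}(0,1),{}_0H^\beta(0,1)]_{\epsilon,2}}$ is claimed uniform in $\epsilon$ without detail), so your argument is at a comparable level of rigor; if you wanted to tighten it, the cleanest fix is to bound $\nm{\D_{0+}^{\sigma}v}_{L^2(0,1)}$ directly by interpolating the maps $\D_{0+}^{1/2}$ and $\D_{0+}^{\beta}$ rather than routing through the reiteration identification of ${}_0H^\sigma(0,1)$.
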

\begin{proof}
    For any $w\in L^2(0,1)$, extend $w$ to $(-\infty,0)$ by zero and denote this extension by $\widetilde{w}$. Let $n\in\mathbb N$ satisfy that $n-1<\beta\leqslant n$. Following the proof of \cite[Theorem 8.1]{Lions-I}, we define an extension operator $ E: L^2(0,1) \to L^2(\mathbb R) $
    by that, for any $ w \in L^2(0,1) $,
    \[
    (Ew)(t): =
    \left\{
    \begin{aligned}
    &\widetilde{w}(t),&&{\rm if~}t<1,\\
    &    \sum_{j=1}^{n+1}
    \gamma_j\widetilde{w}(j+1-jt),&&{\rm if~}t>1,
    \end{aligned}
    \right.
    \]
    where $\gamma_j$ is defined by
    \[
    \sum_{j=1}^{n+1}(-j)^k\gamma_j = 1,\quad0\leqslant k\leqslant n.
    \]
    Since a straightforward computation gives
    \[
    \begin{aligned}
    \nm{Ew}_{H^k(\mathbb R)} \leqslant {}&C_k \nm{w}_{{}_0H^k(0,1)}
    &&\forall\,w\in{}_0H^k(0,1),\quad0\leqslant k\leqslant n,
    \end{aligned}
    \]
    applying \cite[Lemma~22.3]{Tartar2007} yields
    \[
    \begin{aligned}
    \nm{Ev}_{H^{(1-\epsilon)/2+\epsilon\beta}(\mathbb R)}
    \leqslant {}&   C_{\beta}
    \nm{v}_{[{}_0H^{1/2}(0,1),\,{}_0H^\beta(0,1)]_{\epsilon,2}}.
    &&
    \end{aligned}
    \]
    In addition, \cite[(23.11)]{Tartar2007} implies that
    \begin{equation*}
    \Big(
    \int_\mathbb R
    \big(1+\snm{\xi}^2\big)^{(1-\epsilon)/2+\epsilon\beta}
    \snm{(\mathcal FEv)(\xi)}^2 \, \mathrm{d}\xi
    \Big)^{1/2} \leqslant
    C_\beta  \nm{Ev}_{H^{(1-\epsilon)/2+\epsilon\beta}(\mathbb R)},
    \end{equation*}
    where $ \mathcal F $ is the Fourier transform operator. Moreover, using \cite[(1.2.4) and
    (1.2.5)]{Agranovich2015} yields
    \begin{equation*}
    \nm{Ev}_{L^\infty(\mathbb R)}
    \leqslant \frac {C_\beta}{\sqrt{\epsilon}}
    \Big(
    \int_\mathbb R \big(1+\xi^2\big)^{(1-\epsilon)/2+\epsilon\beta}
    \snm{(\mathcal FEv)(\xi)}^2
    \, \mathrm{d}\xi
    \Big)^{1/2}.
    \end{equation*}
    Therefore it follows that
    \[
    \begin{split}
    \nm{v}_{C[0,1]}=    \nm{Ev}_{L^\infty(\mathbb R)}
    \leqslant \frac {C_\beta}{\sqrt{\epsilon}}
    \nm{v}_{[{}_0H^{1/2}(0,1),\,{}_0H^\beta(0,1)]_{\epsilon,2}}.
    \end{split}
    \]
    Since borrowing the proof of \cite[Corollary 1.7]{Lunardi2018} gives
    \[
    \nm{v}_{[{}_0H^{1/2}(0,1),\,{}_0H^\beta(0,1)]_{\epsilon,2}}
    \leqslant \frac{1}{\sqrt{\epsilon}}
    \nm{v}_{{}_0H^{1/2}(0,1)}^{1-\epsilon}
    \nm{v}_{{}_0H^{\beta}(0,1)}^{\epsilon},
    \]
    we finally obtain \cref{eq:trace} by the above two inequalities. This concludes the proof of this lemma.
\end{proof}

Now let us generalize the fractional integral operator as follows. Recall that in this section $ X $ denotes a separable Hilbert space.
Assume that $\beta,\gamma>0$ and $ v \in{}_0H^{-\beta}(a,b;X) $. If $0<\gamma\leqslant\beta$, then define
$\D_{a+}^{-\gamma} v \in {}_0H^{\gamma-\beta}(a,b;X) $
by that
\begin{equation}\label{eq:def-int-1}
\dual{\D_{a+}^{-\gamma} v, w}_{{}^0\!H^{\beta-\gamma}(a,b;X)} :=
\dual{ v, \D_{b-}^{-\gamma} w}_{{}^0\!H^{\beta}(a,b;X)},
\end{equation}
for all $w\in{}^0\!H^{\beta-\gamma}(a,b;X)$; if $\gamma>\beta$, then define $\D_{a+}^{-\gamma} v \in {}_0H^{\gamma-\beta}(a,b;X) $ by that
\begin{equation}\label{eq:def-int-2}
\D_{a+}^{-\gamma} v :=
\D_{a+}^{\beta-\gamma} \D_{a+}^{-\beta} v
\end{equation}
By \cref{lem:regu-frac-int-leq,lem:regu-frac-deriv,lem:regu-frac-int-geq}, the generalized left-sided fractional integral operator
\[
\D_{a+}^{-\gamma}: {}_0H^{-\beta}(a,b;X)\to{}_0H^{\gamma-\beta}(a,b;X)
\]
is well-defined for all $\beta,\gamma>0$. Symmetrically,
we can generalize the right-sided fractional integral operator as follows. Assume that $\beta,\gamma>0$ and $ v \in{}^0\!H^{-\beta}(a,b;X) $. Define
$\D_{b-}^{-\gamma} v\in{}^0\!H^{\gamma-\beta}(a,b;X)$ by that
\[
\dual{ \D_{b-}^{-\gamma} v,w}_{{}_0H^{\beta-\gamma}(a,b;X)}:=
\dual{v,\D_{a+}^{-\gamma}w}_{{}_0H^{\beta}(a,b;X)},
\]
for all $w\in{}_0H^{\beta-\gamma}(a,b;X)$.
\begin{lem}
    \label{lem:regu-frac-int-general}
    If $ \beta,\gamma>0$, then
        \begin{equation}    \label{eq:regu-general-left}
    \small
    C_{\beta,\gamma} \nm{v}_{{}_0H^{-\beta}(a,b)}\leqslant
    \nm{\D_{a+}^{-\gamma} v}_{{}_0H^{\gamma-\beta}(a,b)} \leqslant{}
    C_{\beta,\gamma} \nm{v}_{{}_0H^{-\beta}(a,b)}
    \quad \forall \,v\in {}_0H^{-\beta}(a,b),
    \end{equation}
    \begin{equation}    \label{eq:regu-general-right}
    \small
    C_{\beta,\gamma} \nm{v}_{{}^0\!H^{-\beta}(a,b)}\leqslant
    \nm{\D_{b-}^{-\gamma} v}_{{}^0\!H^{\gamma-\beta}(a,b)} \leqslant{}
    C_{\beta,\gamma} \nm{v}_{{}^0\!H^{-\beta}(a,b)}\quad
    \forall \,v\in {}^0\!H^{-\beta}(a,b).
    \end{equation}
\end{lem}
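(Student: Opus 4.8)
The plan is to establish \cref{eq:regu-general-left} by treating the two defining cases $0<\gamma\leqslant\beta$ and $\gamma>\beta$ separately; the bounds in \cref{eq:regu-general-right} then follow from the completely symmetric argument, interchanging the left- and right-sided operators and the scales ${}_0H$ and ${}^0\!H$ throughout.

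For the case $0<\gamma\leqslant\beta$, I would argue directly from the duality definition \cref{eq:def-int-1}. Since ${}_0H^{\gamma-\beta}(a,b)$ is the dual of ${}^0\!H^{\beta-\gamma}(a,b)$, the norm $\nm{\D_{a+}^{-\gamma}v}_{{}_0H^{\gamma-\beta}(a,b)}$ equals the supremum of $\dual{\D_{a+}^{-\gamma}v,w}_{{}^0\!H^{\beta-\gamma}(a,b)}/\nm{w}_{{}^0\!H^{\beta-\gamma}(a,b)}$ over nonzero $w\in{}^0\!H^{\beta-\gamma}(a,b)$. For the upper bound I would rewrite this pairing via \cref{eq:def-int-1} as $\dual{v,\D_{b-}^{-\gamma}w}_{{}^0\!H^{\beta}(a,b)}$, estimate it by $\nm{v}_{{}_0H^{-\beta}(a,b)}\nm{\D_{b-}^{-\gamma}w}_{{}^0\!H^{\beta}(a,b)}$, and then apply the right-sided bound \cref{eq:regu-right} of \cref{lem:regu-frac-int-leq} (with base index $\beta-\gamma\geqslant0$) to get $\nm{\D_{b-}^{-\gamma}w}_{{}^0\!H^{\beta}(a,b)}\leqslant C_{\beta,\gamma}\nm{w}_{{}^0\!H^{\beta-\gamma}(a,b)}$. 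For the lower bound, given any test function $\phi\in{}^0\!H^{\beta}(a,b)$ I would set $w:=\D_{b-}^{\gamma}\phi$; since $\beta\geqslant\gamma$, \cref{eq:compose-ID-right} of \cref{lem:compose-ID} gives $\D_{b-}^{-\gamma}w=\phi$, whereas \cref{eq:regu-frac-right-deriv} of \cref{lem:regu-frac-deriv} gives $\nm{w}_{{}^0\!H^{\beta-\gamma}(a,b)}\leqslant C_{\beta,\gamma}\nm{\phi}_{{}^0\!H^{\beta}(a,b)}$. Substituting this choice back into \cref{eq:def-int-1} and taking the supremum over $\phi$ yields $\nm{v}_{{}_0H^{-\beta}(a,b)}\leqslant C_{\beta,\gamma}\nm{\D_{a+}^{-\gamma}v}_{{}_0H^{\gamma-\beta}(a,b)}$.

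For the case $\gamma>\beta$, I would invoke the second defining relation \cref{eq:def-int-2}, namely $\D_{a+}^{-\gamma}v=\D_{a+}^{\beta-\gamma}\D_{a+}^{-\beta}v$, in which $\D_{a+}^{\beta-\gamma}$ is the ordinary fractional integral of order $\gamma-\beta$. Applying the case already settled with $\gamma=\beta$ shows that $\D_{a+}^{-\beta}v\in{}_0H^{0}(a,b)=L^2(a,b)$ and that $\nm{\D_{a+}^{-\beta}v}_{L^2(a,b)}$ is bounded above and below by constant multiples of $\nm{v}_{{}_0H^{-\beta}(a,b)}$. It then only remains to act by $\D_{a+}^{\beta-\gamma}$ on an $L^2$ function: the upper bound in \cref{eq:regu-left} of \cref{lem:regu-frac-int-leq} (with base index $0$) gives $\nm{\D_{a+}^{-\gamma}v}_{{}_0H^{\gamma-\beta}(a,b)}\leqslant C_{\beta,\gamma}\nm{\D_{a+}^{-\beta}v}_{L^2(a,b)}$, while \cref{lem:regu-frac-int-geq} (again with base index $0$) supplies the reverse inequality. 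Chaining the two norm equivalences finishes this case, and hence \cref{eq:regu-general-left}.

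The step I expect to demand the most care is the lower bound in the case $0<\gamma\leqslant\beta$: it relies on the surjectivity of $\D_{b-}^{-\gamma}\colon{}^0\!H^{\beta-\gamma}(a,b)\to{}^0\!H^{\beta}(a,b)$, which I realize through the explicit preimage $w=\D_{b-}^{\gamma}\phi$ justified by the composition identity \cref{eq:compose-ID-right} together with the mapping estimate \cref{eq:regu-frac-right-deriv}. Apart from this, the chief risk is mismatched bookkeeping of the index shifts and of which scale (${}_0H$ versus ${}^0\!H$, primal versus dual) each factor inhabits, so throughout I would track carefully the space in which every quantity lives.
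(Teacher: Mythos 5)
Your proposal is correct and follows essentially the same route as the paper's proof: the same case split according to the two defining relations \cref{eq:def-int-1,eq:def-int-2}, the upper bound via duality plus \cref{lem:regu-frac-int-leq}, and the lower bound via the substitution $w=\D_{b-}^{\gamma}\phi$ justified by \cref{lem:compose-ID,lem:regu-frac-deriv}. The only cosmetic difference is that for the lower bound in the case $\gamma>\beta$ you chain norm equivalences through \cref{lem:regu-frac-int-geq} applied to $\D_{a+}^{-\beta}v\in L^2(a,b)$, whereas the paper carries out the equivalent duality computation directly; both rest on the same underlying lemmas.
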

\begin{proof}
    Since the proofs of \cref{eq:regu-general-left,eq:regu-general-right} are similar, we only give the proof of the former.

    Let us first prove that
    \begin{equation}\label{eq:tmp}
    \nm{\D_{a+}^{-\gamma} v}_{{}_0H^{\gamma-\beta}(a,b)} \leqslant{}
    C_{\beta,\gamma} \nm{v}_{{}_0H^{-\beta}(a,b)}
    \quad \forall \,v\in {}_0H^{-\beta}(a,b).
    \end{equation}
    If $\gamma\leqslant\beta $, then by \cref{lem:regu-frac-int-leq} and definition \cref{eq:def-int-1},
    \[
    \begin{split}
    {}&\dual{\D_{a+}^{-\gamma} v, w}_{{}^0\!H^{\beta-\gamma}(a,b)} ={}
    \dual{ v, \D_{b-}^{-\gamma} w}_{{}^0\!H^{\beta}(a,b)}\\
    \leqslant{}&
    \nm{v}_{{}_0H^{-\beta}(a,b)}
    \nm{\D_{b-}^{-\gamma} w}_{{}^0\!H^{\beta}(a,b)}\\
    \leqslant{}&C_{\beta,\gamma}
    \nm{v}_{{}_0H^{-\beta}(a,b)}
    \nm{w}_{{}^0\!H^{\beta-\gamma}(a,b)},
    \end{split}
    \]
    for all $w\in{}^0\!H^{\beta-\gamma}(a,b)$, which proves \cref{eq:tmp} for $\gamma\leqslant\beta $. If $\gamma>\beta$, then by definition \cref{eq:def-int-2} and \cref{lem:regu-frac-int-leq} and the previous case, we have
    \[
    \begin{split}
    {}& \nm{\D_{a+}^{-\gamma} v}_{{}_0H^{\gamma-\beta}(a,b)} =
    \big\|\D_{a+}^{\beta-\gamma} \D_{a+}^{-\beta} v\big\|_{{}_0H^{\gamma-\beta}(a,b)}\\
    \leqslant{}&C_{\beta,\gamma}
    \big\|\D_{a+}^{-\beta} v\big\|_{L^{2}(a,b)}
    \leqslant{}C_{\beta,\gamma}
    \nm{v}_{{}_0H^{-\beta}(a,b)}.
    \end{split}
    \]
    This proves \cref{eq:tmp} for the case $\gamma>\beta$.

    Then it remains to prove that
    \begin{equation}\label{eq:tmp1}
    \nm{v}_{{}_0H^{-\beta}(a,b)}\leqslant{}
    C_{\beta,\gamma}    \nm{\D_{a+}^{-\gamma} v}_{{}_0H^{\gamma-\beta}(a,b)}
    \quad \forall \,v\in {}_0H^{-\beta}(a,b).
    \end{equation}
    If $\gamma\leqslant\beta $, then by definition \cref{eq:def-int-1} and \cref{lem:compose-ID,lem:regu-frac-deriv}
    \[
    \begin{split}
    \dual{ v, w}_{{}^0\!H^{\beta}(a,b)} =       {}&\dual{ v, \D_{b-}^{-\gamma} \D_{b-}^{\gamma} w}_{{}^0\!H^{\beta}(a,b)}
    ={}
    \dual{\D_{a+}^{-\gamma} v, \D_{b-}^{\gamma} w}_{{}^0\!H^{\beta-\gamma}(a,b)}    \\
    \leqslant{}&
    \nm{\D_{a+}^{-\gamma} v}_{{}_0H^{\gamma-\beta}(a,b)}
    \nm{\D_{b-}^{-\gamma} w}_{{}^0\!H^{\beta-\gamma}(a,b)}\\
    \leqslant{}&C_{\beta,\gamma}
    \nm{\D_{a+}^{-\gamma} v}_{{}_0H^{\gamma-\beta}(a,b)}
    \nm{w}_{{}^0\!H^{\beta}(a,b)},
    \end{split}
    \]
    for all $w\in{}^0\!H^{\beta}(a,b)$, so that we have
    \[
    \nm{v}_{{}_0H^{-\beta}(a,b)}\leqslant C_{\beta,\gamma}
    \nm{\D_{a+}^{-\gamma} v}_{{}_0H^{\gamma-\beta}(a,b)}.
    \]
    If $\gamma>\beta$, then by definition \cref{eq:def-int-2} and  \cref{lem:compose-ID,lem:regu-frac-deriv}, an evident calculation gives
    \[
    \begin{split}
    {}&\dual{ v, w}_{{}^0\!H^{\beta}(a,b)} =        \big\langle v, \D_{b-}^{-\beta} \D_{b-}^{\beta} w\big\rangle_{{}^0\!H^{\beta}(a,b)}\\
    ={}&
    \big\langle\D_{a+}^{-\beta} v, \D_{b-}^{\beta} w\big\rangle_{(a,b)}     =
    \big\langle\D_{a+}^{\gamma-\beta} \D_{a+}^{-\gamma} v, \D_{b-}^{\beta} w\big\rangle_{(a,b)}     \\
    \leqslant{}&
    \big\|\D_{a+}^{\gamma-\beta} \D_{a+}^{-\gamma} v\big\|_{L^{2}(a,b)}
    \big\|\D_{b-}^{\beta} w\big\|_{L^2(a,b)}\\
    \leqslant{}&C_{\beta,\gamma}
    \nm{\D_{a+}^{-\gamma} v}_{{}_0H^{\gamma-\beta}(a,b)}
    \nm{w}_{{}^0\!H^{\beta}(a,b)},
    \end{split}
    \]
    for all $w\in{}^0\!H^{\beta}(a,b)$, which implies that
    \[
    \nm{v}_{{}_0H^{-\beta}(a,b)}\leqslant C_{\beta,\gamma}
    \nm{\D_{a+}^{-\gamma} v}_{{}_0H^{\gamma-\beta}(a,b)}.
    \]
    This proves \cref{eq:tmp1} and thus completes the proof of this lemma.
\end{proof}
\begin{lem}\label{lem:dual-ID}
    If $v\in H^{\gamma}(a,b) $ with $0<\gamma<1/2$, then
    \begin{equation}\label{eq:dualID}
    \big|\big\langle
    \D_{a+}^{ 2\gamma} v,
    \D_{a+}^{-2\gamma} v
    \big\rangle_{H^{\gamma}(a,b)}\big|
    \leqslant C_\gamma
    \nm{v}_{{}_0H^{ \gamma}(a,b)}
    \nm{v}_{{}_0H^{-\gamma}(a,b)}.
    \end{equation}
\end{lem}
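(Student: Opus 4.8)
The plan is to collapse the $H^\gamma$-duality pairing into an ordinary $L^2$ inner product by means of the symmetry identity of \cref{lem:coer}, and then to strip off the two fractional operators one at a time, matching each resulting factor to one of the two target norms on the right-hand side of \cref{eq:dualID}. Throughout I shall use freely that, because $0<\gamma<1/2$, the three scales ${}_0H^\gamma(a,b)$, ${}^0\!H^\gamma(a,b)$ and $H^\gamma(a,b)$ coincide with equivalent norms, so that left-type and right-type fractional Sobolev spaces may be interchanged at the cost of a constant $C_\gamma$; this interchange is exactly what makes the mixed left/right composition below tractable.

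First I would check that $w:=\D_{a+}^{-2\gamma}v$ is well-defined and lies in $H^\gamma(a,b)$. Since $v\in H^\gamma(a,b)={}_0H^\gamma(a,b)$, \cref{lem:regu-frac-int-leq} gives $\D_{a+}^{-2\gamma}v\in{}_0H^{3\gamma}(a,b)\subset{}_0H^\gamma(a,b)=H^\gamma(a,b)$. Hence both $v$ and $w$ belong to $H^\gamma(a,b)$, and the symmetry identity in \cref{lem:coer} applies to give
\[
\big\langle \D_{a+}^{2\gamma}v,\,\D_{a+}^{-2\gamma}v\big\rangle_{H^\gamma(a,b)}
=\big\langle \D_{a+}^\gamma v,\ \D_{b-}^\gamma\D_{a+}^{-2\gamma}v\big\rangle_{(a,b)}.
\]
A Cauchy--Schwarz inequality then bounds the quantity of interest by $\nm{\D_{a+}^\gamma v}_{L^2(a,b)}\,\nm{\D_{b-}^\gamma\D_{a+}^{-2\gamma}v}_{L^2(a,b)}$. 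The first factor is immediate: taking $\beta=\gamma$ in \cref{eq:regu-frac-left-deriv} of \cref{lem:regu-frac-deriv} yields $\nm{\D_{a+}^\gamma v}_{L^2(a,b)}\leqslant C_\gamma\nm{v}_{{}_0H^\gamma(a,b)}$, which is the first of the two target norms.

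The main work, and where I expect the one genuine difficulty, is to control the second factor by $C_\gamma\nm{v}_{{}_0H^{-\gamma}(a,b)}$. Writing $\psi:=\D_{a+}^{-2\gamma}v$, I would apply the right-sided estimate \cref{eq:regu-frac-right-deriv} of \cref{lem:regu-frac-deriv} with $\beta=\gamma$ to obtain $\nm{\D_{b-}^\gamma\psi}_{L^2(a,b)}\leqslant C_\gamma\nm{\psi}_{{}^0\!H^\gamma(a,b)}$; here it is essential that $\gamma<1/2$, since only then is $\psi\in{}_0H^{3\gamma}(a,b)\subset{}_0H^\gamma(a,b)={}^0\!H^\gamma(a,b)$, so that a \emph{right}-type estimate may legitimately be applied to a function produced by the \emph{left}-sided integral. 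It then remains to bound $\nm{\psi}_{{}^0\!H^\gamma(a,b)}\approx\nm{\psi}_{{}_0H^\gamma(a,b)}=\nm{\D_{a+}^{-2\gamma}v}_{{}_0H^\gamma(a,b)}$, and this is precisely the upper bound in \cref{eq:regu-general-left} of \cref{lem:regu-frac-int-general}, taken with integral order $2\gamma$ and $\beta=\gamma$, giving $\nm{\D_{a+}^{-2\gamma}v}_{{}_0H^\gamma(a,b)}\leqslant C_\gamma\nm{v}_{{}_0H^{-\gamma}(a,b)}$.

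Chaining the three estimates then produces \cref{eq:dualID}. The one point that must be handled cleanly, rather than the estimates themselves, is the consistency of the two readings of $\D_{a+}^{-2\gamma}v$: in the final step the symbol is understood through the generalized fractional integral of \cref{eq:def-int-1,eq:def-int-2}, whereas elsewhere it is the classical Riemann--Liouville integral of \cref{def:frac_calc}. Since $v\in H^\gamma(a,b)\subset L^2(a,b)$, the two agree, as one verifies from the adjoint relation \cref{lem:basic-2} together with the semigroup property \cref{lem:basic-frac}; this compatibility check, and the repeated but routine uses of the norm equivalence on $(0,1/2)$, are the only subtleties.
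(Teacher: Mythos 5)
Your proof is correct and follows essentially the same route as the paper: both use the symmetry identity of \cref{lem:coer} to rewrite the pairing as $\langle\D_{a+}^{\gamma}v,\,\D_{b-}^{\gamma}\D_{a+}^{-2\gamma}v\rangle_{(a,b)}$, apply Cauchy--Schwarz, and bound the two factors via \cref{lem:regu-frac-deriv} and \cref{lem:regu-frac-int-general}. The only (immaterial) difference is in the second factor, where the paper invokes the coercivity inequality of \cref{lem:coer} to pass from $\D_{b-}^{\gamma}$ to $\D_{a+}^{\gamma}$ at the $L^2$ level and then uses the semigroup identity $\D_{a+}^{\gamma}\D_{a+}^{-2\gamma}v=\D_{a+}^{-\gamma}v$, whereas you route through the equivalence ${}^0\!H^{\gamma}(a,b)\simeq{}_0H^{\gamma}(a,b)$ for $\gamma<1/2$; your explicit compatibility check between the classical and generalized readings of $\D_{a+}^{-2\gamma}v$ is a welcome addition that the paper leaves implicit.
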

\begin{proof}
    Since by \cref{lem:coer},
    \[
    \big\|\D_{b-}^{\gamma}\D_{a+}^{-2\gamma} v\big\|_{L^{2}(a,b)}
    \leqslant C_\gamma
    \big\|\D_{a+}^{\gamma}\D_{a+}^{-2\gamma} v\big\|_{L^{2}(a,b)}
    =C_\gamma
    \big\|\D_{a+}^{-\gamma} v\big\|_{L^{2}(a,b)},
    \]
    it follows that
    \[
    \begin{split}
    {}&\big|\big\langle
    \D_{a+}^{ 2\gamma} v,
    \D_{a+}^{-2\gamma} v
    \big\rangle_{H^{\gamma}(a,b)}\big| =
    \big|  \big\langle
    \D_{a+}^{ \gamma} v,\D_{b-}^{\gamma}\D_{a+}^{-2\gamma} v
    \big\rangle_{(a,b)}\big|  \\
    \leqslant{}&
    \big\|\D_{a+}^{ \gamma} v\big\|_{L^{2}(a,b)}
    \big\|\D_{b-}^{\gamma}\D_{a+}^{-2\gamma} v\big\|_{L^{2}(a,b)}\\
    \leqslant{}&C_\gamma
    \big\|\D_{a+}^{ \gamma}v\big\|_{L^{2}(a,b)}
    \big\|\D_{a+}^{-\gamma}v\big\|_{L^{2}(a,b)}\\
    \leqslant {}&C_\gamma
    \nm{v}_{{}_0H^{ \gamma}(a,b)}
    \nm{v}_{{}_0H^{-\gamma}(a,b)},
    \end{split}
    \]
    by \cref{lem:regu-frac-deriv,lem:regu-frac-int-general}. This completes the proof.
\end{proof}
\begin{lem}\label{lem:3.13}
    Assume that $\gamma<\beta+1/2$. If $v\in{}_0H^{\beta}(a,b)$, then
    $\D_{a+}^\gamma v\in{}_0H^{\beta-\gamma}(a,b)$ and
    \begin{equation}\label{eq:Dvw-1}
    \dual{\D_{a+}^\gamma v,w}_{{}^0\!H^{\gamma-\beta}(a,b)} =   \dual{\D_{a+}^\beta v,\D_{b-}^{\gamma-\beta} w}_{(a,b)}
    \end{equation}
    for all $w\in{}^0\!H^{\gamma-\beta}(a,b)$.
    If $v\in{}^0\!H^{\beta}(a,b)$,
    then $\D_{b-}^\gamma v\in{}^0\!H^{\beta-\gamma}(a,b)$ and
    \begin{equation}\label{eq:Dvw-2}
    \dual{\D_{b-}^\gamma v,w}_{{}_0H^{\gamma-\beta}(a,b)} =     \dual{\D_{b-}^\beta v,\D_{b-}^{\gamma-\beta} w}_{(a,b)}
    \end{equation}
    for all $w\in{}_0H^{\gamma-\beta}(a,b)$.
\end{lem}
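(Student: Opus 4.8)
The plan is to reduce \cref{eq:Dvw-1} to a base case concerning $\D_{a+}^{\gamma-\beta}$ acting on an $L^2$ function, and then to read off $\D_{a+}^\gamma v$ from the duality pairing appearing on the right-hand side. First I would record that, since $v\in{}_0H^\beta(a,b)$, the function $h:=\D_{a+}^\beta v$ lies in $L^2(a,b)={}_0H^0(a,b)$ with $\nm{h}_{L^2(a,b)}\leqslant C_\beta\nm{v}_{{}_0H^\beta(a,b)}$ (by \cref{lem:regu-frac-deriv}, and by \cref{lem:regu-frac-int-general} when $\beta\leqslant0$), while \cref{lem:compose-ID,lem:basic-frac} give the composition identity $\D_{a+}^\gamma v=\D_{a+}^{\gamma-\beta}h$. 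Writing $\delta:=\gamma-\beta$, the hypothesis $\gamma<\beta+1/2$ becomes $\delta<1/2$, and the assertion is reduced to showing that every $h\in L^2(a,b)$ satisfies $\D_{a+}^\delta h\in{}_0H^{-\delta}(a,b)$ together with $\dual{\D_{a+}^\delta h,w}_{{}^0\!H^\delta(a,b)}=\dual{h,\D_{b-}^\delta w}_{(a,b)}$ for all $w\in{}^0\!H^\delta(a,b)$.

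The core observation is that $\delta<1/2$ forces $\D_{b-}^\delta$ to map ${}^0\!H^\delta(a,b)$ boundedly into $L^2(a,b)$: for $0<\delta<1/2$ this is \cref{lem:regu-frac-deriv} (right-sided, with both exponents equal to $\delta$), for $\delta=0$ it is the identity, and for $\delta<0$ it is the right-sided bound in \cref{lem:regu-frac-int-general}. Hence $\snm{\dual{h,\D_{b-}^\delta w}_{(a,b)}}\leqslant\nm{h}_{L^2(a,b)}\nm{\D_{b-}^\delta w}_{L^2(a,b)}\leqslant C_\delta\nm{h}_{L^2(a,b)}\nm{w}_{{}^0\!H^\delta(a,b)}$, so $w\mapsto\dual{h,\D_{b-}^\delta w}_{(a,b)}$ is a bounded functional on ${}^0\!H^\delta(a,b)$ and, by reflexivity, represents a unique element of $\big({}^0\!H^\delta(a,b)\big)^*={}_0H^{-\delta}(a,b)$; this already yields the regularity $\D_{a+}^\gamma v\in{}_0H^{\beta-\gamma}(a,b)$ once the identification below is in place. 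It is precisely here that the strict inequality $\gamma<\beta+1/2$ is indispensable, since at $\delta=1/2$ the estimate for $\D_{b-}^{1/2}$ on ${}^0\!H^{1/2}(a,b)$ degenerates (compare the restriction $\gamma<1/2$ in \cref{lem:coer}).

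It then remains to identify this dual element with the distribution $\D_{a+}^\delta h$. When $\delta\leqslant0$ the operator $\D_{a+}^\delta$ is a (generalized) fractional integral, the membership in ${}_0H^{-\delta}(a,b)$ is immediate from \cref{lem:regu-frac-int-leq}, and the identity is the fractional integration-by-parts of \cref{lem:basic-2} together with the defining relation \cref{eq:def-int-1}. For $0<\delta<1/2$ I would argue by density: test against $w\in C_c^\infty(a,b)$, write $\D_{a+}^\delta h=\D\,\D_{a+}^{\delta-1}h$ with $\D_{a+}^{\delta-1}h\in L^2(a,b)$, integrate by parts to move $\D$ onto $w$, apply \cref{lem:basic-2} to transfer the fractional integral $\D_{a+}^{-(1-\delta)}$ onto $w$, and use the commutation of the ordinary derivative with the right-sided fractional integral (valid for smooth compactly supported $w$, where no boundary terms arise) to recognize the outcome as $\dual{h,\D_{b-}^\delta w}_{(a,b)}$. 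Since $C_c^\infty(a,b)$ is dense in ${}^0\!H^\delta(a,b)$ and both sides are continuous there by the bound of the previous step, the identity extends to all $w\in{}^0\!H^\delta(a,b)$. Undoing the substitutions $h=\D_{a+}^\beta v$ and $\delta=\gamma-\beta$ proves \cref{eq:Dvw-1}, and \cref{eq:Dvw-2} follows by the same argument after interchanging the roles of $\D_{a+}$ and $\D_{b-}$ (equivalently, of the ${}_0H$ and ${}^0\!H$ scales).

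The delicate part of the proof is exactly the range $0<\delta<1/2$, where $\D_{a+}^\gamma v$ is a priori only a distribution defined through \cref{def:frac_calc} rather than an honest $L^2$ function; the work lies in reconciling that distributional definition with the bounded functional constructed above, which is what the density argument and the commutation of $\D$ with the fractional integral accomplish. The chief bookkeeping hazard is keeping the two boundary-condition scales ${}_0H$ and ${}^0\!H$ straight through each integration by parts, and verifying that the strict bound $\delta<1/2$ is used exactly where the continuity of $\D_{b-}^\delta$ on ${}^0\!H^\delta(a,b)$ is invoked.
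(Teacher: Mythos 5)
Your strategy is essentially a repackaging of the paper's argument: the paper also ultimately pairs $\D_{a+}^\beta v\in L^2(a,b)$ against $\D_{b-}^{\gamma-\beta}w$, and its hardest case ($\gamma>0$ with $\gamma-1/2<\beta<\gamma$) is handled by exactly your density argument --- testing against $\phi\in C_0^\infty(a,b)$, writing $\D_{a+}^\gamma v=\D\,\D_{a+}^{\gamma-1}v$, integrating by parts, and shifting the fractional integral to the right side. The difference is organizational: the paper runs a four-way case split on the signs of $\gamma$ and the order relation between $\beta$ and $\gamma$, proving \cref{eq:Dvw-1} directly in each regime from \cref{eq:def-int-1,eq:def-int-2}, whereas you front-load a single composition identity $\D_{a+}^\gamma v=\D_{a+}^{\gamma-\beta}\D_{a+}^\beta v$ and then treat one base case with $h\in L^2$ and $\delta=\gamma-\beta<1/2$. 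That is cleaner to read, but be aware that the composition identity is not actually delivered by \cref{lem:compose-ID,lem:basic-frac} when $\beta\leqslant 0$: those lemmas apply to honest $L^1$/Sobolev functions, while for $\beta\leqslant0$ both $\D_{a+}^\beta v$ and (for $\gamma\leqslant0$) $\D_{a+}^\gamma v$ are defined by the duality relations \cref{eq:def-int-1,eq:def-int-2}, so verifying $\D_{a+}^\gamma v=\D_{a+}^{\gamma-\beta}\D_{a+}^\beta v$ there requires unwinding those definitions --- which is precisely the content of the paper's cases ``$\gamma\leqslant0$, $\beta<0$'' and ``$\gamma\leqslant0$, $\gamma-1/2<\beta<\gamma$''. (Note also that \cref{lem:DD}, the general composition law, is proved \emph{after} and \emph{from} this lemma, so it cannot be invoked here.) One smaller point: the place where $\gamma<\beta+1/2$ is genuinely indispensable is the density of $C_c^\infty(a,b)$ in ${}^0\!H^{\delta}(a,b)$, which fails for $\delta\geqslant1/2$ because of the boundary condition at $b$; the boundedness of $\D_{b-}^{\delta}:{}^0\!H^{\delta}(a,b)\to L^2(a,b)$ that you single out actually survives at $\delta=1/2$ by \cref{lem:regu-frac-deriv} (it is the coercivity constant in \cref{lem:coer} that degenerates, not this norm bound). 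Neither issue breaks the proof, but both need to be addressed to make the reduction airtight.
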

\begin{proof}
    As the proof of \cref{eq:Dvw-2} is similar to that of \cref{eq:Dvw-1}, we only prove \cref{eq:Dvw-1}.

    Firstly, if $0<\gamma\leqslant\beta$, then by \cref{lem:regu-frac-deriv}, it is obvious that $\D_{a+}^\gamma v\in{}_0H^{\beta-\gamma}(a,b)$, and  \cref{eq:Dvw-1} holds indeed by by \cref{lem:compose-ID} and definition \cref{eq:def-int-1}.

    Next we consider the case $\gamma\leqslant0$ and $ \beta\geqslant\gamma$. By \cref{lem:regu-frac-int-leq,lem:regu-frac-int-general}, we have $\D_{a+}^\gamma v\in{}_0H^{\beta-\gamma}(a,b)$. If $\beta\geqslant 0$, then by \cref{lem:basic-frac,lem:compose-ID}, it is evident that
    \[
    \D_{a+}^{\gamma-\beta} \D_{a+}^\beta v=
    \D_{a+}^{\gamma} \D_{a+}^{-\beta}\D_{a+}^{\beta}  v=
    \D_{a+}^{\gamma} v.
    \]
    If $\beta<0$, then by definition \cref{eq:def-int-2},
    \[
    \begin{split}
{}& \dual{\D_{b-}^{\gamma-\beta} w,\D_{a+}^\beta v}_{(a,b)}=
    \dual{w,\D_{a+}^{\gamma-\beta} \D_{a+}^\beta v}_{{}_0H^{\beta-\gamma}(a,b)}\\
    ={}&
    \dual{w,\D_{a+}^{\gamma}  v}_{{}_0H^{\beta-\gamma}(a,b)}=
    \dual{\D_{a+}^\gamma v,w}_{{}^0\!H^{\gamma-\beta}(a,b)},
    \end{split}
    \]
    for all $w\in{}^0\!H^{\gamma-\beta}(a,b)$, which proves \cref{eq:Dvw-1} for $\gamma\leqslant0$ and $ \beta\geqslant\gamma$.

    Then let us consider the case $\gamma\leqslant0$ and $ \gamma-1/2<\beta<\gamma$. By \cref{lem:regu-frac-int-general}, we have $\D_{a+}^\gamma v\in{}_0H^{\beta-\gamma}(a,b)$, and using \cref{lem:basic-frac,lem:compose-ID} and definition \cref{eq:def-int-1} gives
    \[
    \begin{split}
    {}&\dual{\D_{a+}^\gamma v,w}_{{}^0\!H^{\gamma-\beta}(a,b)} \\
    = {}&   \dual{v,\D_{b-}^{\gamma} w}_{{}^0\!H^{-\beta}(a,b)}
    ={} \dual{v,\D_{b-}^{\gamma} \D_{b-}^{\beta-\gamma}
        \D_{b-}^{\gamma-\beta} w}_{{}^0\!H^{-\beta}(a,b)}\\
    ={}&\dual{v,\D_{b-}^{\beta}
        \D_{b-}^{\gamma-\beta} w}_{{}^0\!H^{-\beta}(a,b)}
    ={}\dual{\D_{a+}^{\beta}v,
        \D_{b-}^{\gamma-\beta} w}_{(a,b)},
    \end{split}
    \]
    for all $w\in{}^0\!H^{\gamma-\beta}(a,b)$. This proves \cref{eq:Dvw-1} for $\gamma\leqslant0$ and $ \gamma-1/2<\beta<\gamma$.

    Finally it remains to consider the case $\gamma>0$ and $\gamma-1/2<\beta<\gamma$ . Let $k\in\mathbb N$ satisfy $k-1<\gamma\leqslant k$. If $\beta\geqslant0$, then by \cref{lem:basic-2,lem:regu-frac-deriv}, a direct manipulation implies that
    \[
    \begin{split}
    \dual{\D_{a+}^\gamma v,\phi} = {}&  \dual{\D^k\D_{a+}^{\gamma-k} v,\phi} =
    (-1)^k\dual{\D_{a+}^{\gamma-k} v,\D^k\phi}_{(a,b)} \\
    ={}&        (-1)^k\dual{ v,\D_{b-}^{\gamma-k}\D^k\phi}_{ (a,b)}
    ={} \dual{ v,\D_{b-}^{\gamma}\phi}_{(a,b)} \\
    ={}& \dual{ v,\D_{b-}^{\beta}\D_{b-}^{\gamma-\beta}\phi}_{ (a,b)}
    = {}\dual{ \D_{a+}^{\beta}v,\D_{b-}^{\gamma-\beta}\phi}_{(a,b)}\\
    \leqslant {}&C_{\beta,\gamma}\nm{v}_{{}_0H^\beta(a,b)}
    \nm{\phi}_{{}^0\!H^{\gamma-\beta}(a,b)},
    \end{split}
    \]
    for all $\phi\in C_0^\infty(a,b)$.
    If $\beta<0$, then $0<\gamma<1/2$. By \cref{lem:regu-frac-deriv,lem:regu-frac-int-general} and definition \cref{eq:def-int-2}, for all $\phi\in C_0^\infty(a,b)$, a similar deduction gives
    \[
    \begin{split}
    {}&\dual{\D_{a+}^\gamma v,\phi}
    =   \dual{\D\D_{a+}^{\gamma-1} v,\phi} \\
    = {}&
    -\dual{\D_{a+}^{\gamma-1} v,\D\phi}_{(a,b)} =
    -\dual{ \D_{a+}^{\gamma-\beta-1} \D_{a+}^{\beta}v,\D\phi}_{ (a,b)}
    \\
    ={}&
    -\dual{ \D_{a+}^{\beta}v,\D_{b-}^{\gamma-\beta-1} \D\phi}_{ (a,b)}
    = \dual{ \D_{a+}^{\beta}v,\D_{b-}^{\gamma-\beta} \phi}_{ (a,b)} \\
    \leqslant {}&C_{\beta,\gamma}\nm{v}_{{}_0H^\beta(a,b)}
    \nm{\phi}_{{}^0\!H^{\gamma-\beta}(a,b)}.
    \end{split}
    \]
    Above, we use $\dual{\cdot,\cdot}$ to denote the dual pair between the dual space of $C_{0}^\infty(a,b)$ and $C_{0}^\infty(a,b)$. Since $0<\gamma-\beta<1/2$, it is clear that $C_0^\infty(a,b)$ is dense in ${}^0\!H^{\gamma-\beta}(a,b)$. Consequently, we conclude that $\D_{a+}^\gamma v\in{}_0H^{\beta-\gamma}(a,b)$ and \cref{eq:Dvw-1} holds indeed. This concludes the proof of this lemma.
\end{proof}
\begin{lem}\label{lem:DD}
    If $\max\{\beta,\,\beta+\gamma\}<s+1/2$, then
    \begin{align}
\D_{a+}^\gamma  \D_{a+}^\beta v = {}&
    \D_{a+}^{\beta+\gamma}v\quad\forall\,v\in{}_0H^{s}(a,b),
    \label{eq:DD1}\\
    \D_{b-}^\gamma  \D_{b-}^\beta v = {}&
    \D_{b-}^{\beta+\gamma}v\quad\forall\,v\in{}^0\!H^{s}(a,b).
    \label{eq:DD2}
    \end{align}
\end{lem}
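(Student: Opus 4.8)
The plan is to prove \cref{eq:DD1}, since \cref{eq:DD2} follows from the symmetric argument obtained by interchanging the roles of $\D_{a+}$ and $\D_{b-}$. First I would dispose of well-definedness. Because $\beta<s+1/2$, \cref{lem:3.13} shows $\D_{a+}^\beta v\in{}_0H^{s-\beta}(a,b)$; because $\beta+\gamma<s+1/2$, that is $\gamma<(s-\beta)+1/2$, a second application of \cref{lem:3.13} gives $\D_{a+}^\gamma\D_{a+}^\beta v\in{}_0H^{s-\beta-\gamma}(a,b)$, and the same lemma applied to $v$ with order $\beta+\gamma$ puts $\D_{a+}^{\beta+\gamma}v$ in the very same space. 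Hence both sides of \cref{eq:DD1} are legitimate elements of ${}_0H^{s-\beta-\gamma}(a,b)$, and only their coincidence there remains in question.

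Next I would reduce the whole identity to the single special case
\[
\D_{a+}^{s-\beta}\D_{a+}^\beta v=\D_{a+}^s v\quad\text{in }L^2(a,b),
\]
which I denote by $(\mathrm S)$; note that both members of $(\mathrm S)$ indeed lie in ${}_0H^0(a,b)=L^2(a,b)$ by \cref{lem:3.13}. The reduction is carried out by testing against $\phi\in C_0^\infty(a,b)$. Applying the adjoint relation \cref{eq:Dvw-1} to $\D_{a+}^\beta v\in{}_0H^{s-\beta}(a,b)$ with differentiation order $\gamma$ yields $\dual{\D_{a+}^\gamma\D_{a+}^\beta v,\phi}=\dual{\D_{a+}^{s-\beta}\D_{a+}^\beta v,\D_{b-}^{\beta+\gamma-s}\phi}_{(a,b)}$, while applying \cref{eq:Dvw-1} to $v\in{}_0H^s(a,b)$ with order $\beta+\gamma$ yields $\dual{\D_{a+}^{\beta+\gamma}v,\phi}=\dual{\D_{a+}^s v,\D_{b-}^{\beta+\gamma-s}\phi}_{(a,b)}$. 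Thus, once $(\mathrm S)$ is known, these two pairings agree for every $\phi\in C_0^\infty(a,b)$. Since $\beta+\gamma-s<1/2$, the space $C_0^\infty(a,b)$ is dense in ${}^0\!H^{\beta+\gamma-s}(a,b)$, whose dual is exactly ${}_0H^{s-\beta-\gamma}(a,b)$; consequently the two sides of \cref{eq:DD1} are equal. This is precisely the step that consumes the full hypothesis $\max\{\beta,\beta+\gamma\}<s+1/2$.

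It remains to establish $(\mathrm S)$, which I expect to be the main obstacle. The guiding idea is that, because the output of $(\mathrm S)$ sits in $L^2(a,b)$, the whole composition can be collapsed using only the elementary rules of \cref{lem:basic-frac} together with \cref{lem:compose-ID}. For $s>0$ I would set $g:=\D_{a+}^s v\in L^2(a,b)$ and write $v=\D_{a+}^{-s}g$ by \cref{lem:compose-ID}; then formally $\D_{a+}^\beta v=\D_{a+}^\beta\D_{a+}^{-s}g=\D_{a+}^{\beta-s}g$ and $\D_{a+}^{s-\beta}\D_{a+}^\beta v=\D_{a+}^{s-\beta}\D_{a+}^{\beta-s}g=g$. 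The delicacy, and the real work, is that the legitimate justification of each such equality depends on the signs of $\beta$ and $s-\beta$: when the relevant exponent is nonpositive the step is a genuine semigroup law for fractional integrals covered by \cref{lem:basic-frac}, whereas when it is positive one must instead invoke the cancellation clause of \cref{lem:basic-frac}, or pass through the generalized operators of \cref{eq:def-int-1,eq:def-int-2} and \cref{lem:regu-frac-int-general}, since $g$ is merely square integrable. Treating each sign configuration of $(\beta,s-\beta)$ in turn, and handling $s\le 0$ by replacing \cref{lem:compose-ID} with the generalized-operator definitions, would complete the proof of $(\mathrm S)$ and hence of the lemma.
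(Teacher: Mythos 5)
Your overall skeleton is the paper's: both arguments isolate the special case $(\mathrm S)$, namely $\D_{a+}^{s-\beta}\D_{a+}^\beta v=\D_{a+}^s v$ in $L^2(a,b)$ (the paper's \cref{eq:DD11}), and then deduce \cref{eq:DD1} by pairing both sides against test functions via \cref{eq:Dvw-1}; your reduction step, including the use of $\beta+\gamma-s<1/2$ to identify the ambient dual space, is exactly what the paper does (the paper tests against general $w\in{}^0\!H^{\beta+\gamma-s}(a,b)$ rather than invoking density of $C_0^\infty(a,b)$, an immaterial difference). Where you diverge is in the proof of $(\mathrm S)$ itself, and this is also the only place your argument is not actually carried out. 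The paper dispatches $(\mathrm S)$ with the same duality device once more: for $\phi\in C_0^\infty(a,b)$ it applies \cref{eq:Dvw-1} to $\D_{a+}^\beta v\in{}_0H^{s-\beta}(a,b)$ and then to $v\in{}_0H^{s}(a,b)$, obtaining
\[
\dual{\D_{a+}^{s-\beta}\D_{a+}^\beta v,\phi}_{(a,b)}
=\dual{\D_{a+}^{s}v,\D_{b-}^{\beta-s}\D_{b-}^{s-\beta}\phi}_{(a,b)}
=\dual{\D_{a+}^{s}v,\phi}_{(a,b)},
\]
with no case analysis on the signs of $\beta$ and $s-\beta$ whatsoever. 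Your alternative — substituting $v=\D_{a+}^{-s}g$ with $g=\D_{a+}^{s}v$ and collapsing compositions by \cref{lem:basic-frac} and \cref{lem:compose-ID} — can be made to work, but note that several of its configurations cannot be closed by those two lemmas alone: for instance when $\beta>s$ the intermediate object $\D_{a+}^{\beta-s}g$ lives in a negative-order space, so the subsequent integration $\D_{a+}^{s-\beta}$ is the generalized operator of \cref{eq:def-int-1}, and verifying $\D_{a+}^{-(\beta-s)}\D_{a+}^{\beta-s}g=g$ there (likewise the identity $v=\D_{a+}^{|s|}\D_{a+}^{-|s|}v$ when $s\leqslant0$) forces you back into exactly the duality pairings you were trying to avoid. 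So your route buys nothing over the paper's and costs a multi-case verification that you have only gestured at; I would either complete that case analysis in full or replace it by the paper's two-line duality computation for $(\mathrm S)$.
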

\begin{proof}
    Let us first prove that
    \begin{equation}\label{eq:DD11}
    \D_{a+}^{s-\beta}\D_{a+}^\beta v = \D_{a+}^s v
    \quad\forall\,v\in{}_0H^{s}(a,b).
    \end{equation}
    By \cref{lem:regu-frac-deriv,lem:regu-frac-int-general}, it is evident that
    $\D_{a+}^{s}v\in L^2(a,b)$. In addition, applying \cref{lem:3.13} yields that
    \[
    \begin{split}
    {}& \dual{  \D_{a+}^{s-\beta}\D_{a+}^\beta v, \phi}_{(a,b)} =
    \dual{\D_{a+}^\beta v, \D_{b-}^{s-\beta}\phi}_{{}^0\!H^{\beta-s}(a,b)}\\
    = {}&
    \dual{  \D_{a+}^{s} v,\D_{b-}^{\beta-s} \D_{b-}^{s-\beta}\phi}_{ (a,b)}=\dual{  \D_{a+}^{s} v, \phi}_{ (a,b)},
    \end{split}
    \]
    for any $\phi\in C_0^\infty(a,b)$, which proves \cref{eq:DD11}.

    Then we turn to the proof of \cref{eq:DD1}.
    Since using \cref{lem:3.13} implies that
    $\D_{a+}^\beta v\in{}_0H^{s-\beta}(a,b) $ and both $\D_{a+}^\gamma\D_{a+}^\beta v $ and $\D_{a+}^{\beta+\gamma}v$ belong to ${}_0H^{s-\gamma-\beta}(a,b)$,
    by \cref{eq:DD11}, we have
    \[
    \begin{split}
    {}& \dual{\D_{a+}^\gamma\D_{a+}^\beta v, w}_{{}^0\!H^{\beta+\gamma-s}(a,b)} =
    \dual{\D_{a+}^{s-\beta}\D_{a+}^\beta v, \D_{b-}^{\beta+\gamma-s}w}_{(a,b)}\\
    = {}&
    \dual{\D_{a+}^s v, \D_{b-}^{\beta+\gamma-s}w}_{(a,b)}=
    \dual{\D_{a+}^{\beta+\gamma} v, w}_{{}^0\!H^{\beta+\gamma-s}(a,b)},
    \end{split}
    \]
    for all $w\in{}^0\!H^{\beta+\gamma-s}(a,b)$, which proves \cref{eq:DD1}. As \cref{eq:DD2} can be proved analogously, we finish the proof of this lemma.
\end{proof}
\section{Weak Solution and Regularity}
\subsection{The first definition}
\label{sec:regu}
Define
\begin{align*}
  \widehat W &:= {}^0\!H^{3\alpha/4}(0,T;L^2(\Omega))
\cap {}^0\!H^{\alpha/4}(0,T;\dot H^1(\Omega)),\\
  W &:= {}_0H^{\alpha/4}(0,T;L^2(\Omega))
  \cap {}_0H^{-\alpha/4}(0,T;\dot H^1(\Omega)),
\end{align*}
and endow these two spaces with the norms
\begin{align*}
  \nm{\cdot}_{\widehat W} &:= \left(
\nm{\cdot}^2_{{}^0\!H^{3\alpha/4}(0,T;L^2(\Omega))} +
\nm{\cdot}^2_{{}^0\!H^{\alpha/4}(0,T;\dot H^1(\Omega))}
\right)^{1/2},\\
  \nm{\cdot}_W &:= \left(
    \nm{\cdot}^2_{{}_0H^{\alpha/4}(0,T;L^2(\Omega)) } +
    \nm{\cdot}^2_{{}_0H^{-\alpha/4}(0,T;\dot H^1(\Omega))}
  \right)^{1/2},
\end{align*}
respectively. Assuming that
\[
 \D_{0+}^{\alpha/2}(u_0+tu_1)\in W^*
\quad{\rm and}\quad f\in \widehat W^*,
\]
we call $ u \in W $ a weak
solution to problem \cref{eq:model} if
\begin{equation}
\label{eq:weak_sol}
\begin{split}
&{}\big\langle \D_{0+}^{\alpha/2} u, v\big\rangle_{{}^0\!H^{\alpha/4}(0,T;L^2(\Omega)) } +
\big\langle
\nabla \D_{0+}^{-\alpha/4}u, \nabla \D_{T-}^{-\alpha/4} v
\big\rangle_{ \Omega\times(0,T)} \\
={}&
\big\langle f, \D_{T-}^{-\alpha/2}v\big\rangle_{\widehat W}
+\big\langle \D_{0+}^{\alpha/2}(u_0+tu_1), v\big\rangle_{W}
\end{split}
\end{equation}
for all $ v \in W $.
     \begin{rem}
    Notice that
    \begin{align*}
    {}_0H^{\alpha/4}(0,T;L^2(\Omega)) &=
    {}^0\!H^{\alpha/4}(0,T;L^2(\Omega)), \\
    {}_0H^{-\alpha/4}(0,T;\dot H^1(\Omega)) &=
    {}^0\!H^{-\alpha/4}(0,T;\dot H^1(\Omega)),
    \end{align*}
    with equivalent norms. Hence, by \cref{lem:regu-frac-int-leq,lem:coer}, it is easy
    to verify that each term in \cref{eq:weak_sol} makes sense.
\end{rem}
By \cref{lem:regu-frac-int-leq,lem:regu-frac-deriv,lem:coer} and the well-known Lax-Milgram theorem, a
routine argument yields that the above weak solution is well-defined.
\begin{thm}
  \label{thm:basic_weak_sol}
  Problem \cref{eq:weak_sol} admits a unique solution $ u \in W $ and
  \[
    \nm{u}_W \leqslant
    C_\alpha
    \left(\nm{f}_{\widehat W^*}+\big\| \D_{0+}^{\alpha/2}(u_0+tu_1)\big\|_{W^*}\right).
  \]
\end{thm}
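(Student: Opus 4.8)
The plan is to recast the weak formulation \cref{eq:weak_sol} as an abstract variational equation $ B(u,v) = F(v) $ posed on the Hilbert space $ W $, where
\[
  B(u,v) := \dual{\D_{0+}^{\alpha/2} u, v}_{{}^0\!H^{\alpha/4}(0,T;L^2(\Omega))}
  + \dual{\nabla \D_{0+}^{-\alpha/4} u, \nabla \D_{T-}^{-\alpha/4} v}_{\Omega\times(0,T)},
\]
and $ F $ is the functional on the right-hand side of \cref{eq:weak_sol}. Since $ W $ is the intersection of two Hilbert spaces and carries the natural graph norm, it is itself a Hilbert space, so it suffices to check the three hypotheses of the Lax-Milgram theorem: boundedness of $ B $ on $ W \times W $, coercivity of $ B $ on $ W $, and boundedness of $ F $ with operator norm controlled by $ \nm{f}_{\widehat W^*} + \big\|\D_{0+}^{\alpha/2}(u_0+tu_1)\big\|_{W^*} $. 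The estimate claimed in \cref{thm:basic_weak_sol} is then exactly the Lax-Milgram stability bound $ \nm{u}_W \leqslant C_\alpha \nm{F}_{W^*} $.

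Coercivity is the heart of the argument. Expanding $ v = \sum_n v_n \phi_n $ in the eigenbasis, I would first rewrite the temporal term: by \cref{lem:3.13} (applied with $ \gamma = \alpha/2 $ and $ \beta = \alpha/4 $, whose hypothesis $ \gamma < \beta + 1/2 $ is exactly $ \alpha < 2 $), the first term of $ B(v,v) $ equals $ \dual{\D_{0+}^{\alpha/4} v, \D_{T-}^{\alpha/4} v}_{(0,T)} = \sum_n \dual{\D_{0+}^{\alpha/4} v_n, \D_{T-}^{\alpha/4} v_n}_{(0,T)} $. For the spatial term, the identity $ \dual{\nabla \phi_n, \nabla \phi_m}_\Omega = \lambda_n \delta_{nm} $ diagonalizes it into $ \sum_n \lambda_n \dual{\D_{0+}^{-\alpha/4} v_n, \D_{T-}^{-\alpha/4} v_n}_{(0,T)} $. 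To each scalar pairing I would apply the lower bound of \cref{lem:coer} with exponents $ \alpha/4 $ and $ -\alpha/4 $ respectively; the point where the range $ 1 < \alpha < 2 $ enters essentially is that both exponents lie in $ (-1/2, 1/2) $, so the coercivity constant $ \cos(\alpha\pi/4) $ is strictly positive. Finally, \cref{lem:regu-frac-deriv} converts $ \sum_n \nm{\D_{0+}^{\alpha/4} v_n}_{L^2(0,T)}^2 $ into $ \nm{v}_{{}_0H^{\alpha/4}(0,T;L^2(\Omega))}^2 $, and \cref{lem:regu-frac-int-general} converts $ \sum_n \lambda_n \nm{\D_{0+}^{-\alpha/4} v_n}_{L^2(0,T)}^2 $ into $ \nm{v}_{{}_0H^{-\alpha/4}(0,T;\dot H^1(\Omega))}^2 $, yielding $ B(v,v) \geqslant C_\alpha \nm{v}_W^2 $.

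Boundedness of $ B $ follows from the same manipulations: after invoking \cref{lem:3.13} and the diagonalization, I would use the Cauchy-Schwarz inequality termwise (and once more over $ n $), then control the resulting factors by \cref{lem:regu-frac-deriv,lem:regu-frac-int-general} together with their right-sided analogues, using that $ {}_0H^{\alpha/4}(0,T;L^2(\Omega)) = {}^0\!H^{\alpha/4}(0,T;L^2(\Omega)) $ and $ {}_0H^{-\alpha/4}(0,T;\dot H^1(\Omega)) = {}^0\!H^{-\alpha/4}(0,T;\dot H^1(\Omega)) $ with equivalent norms. For $ F $, the term $ \dual{\D_{0+}^{\alpha/2}(u_0+tu_1), v}_W $ is bounded by $ \big\|\D_{0+}^{\alpha/2}(u_0+tu_1)\big\|_{W^*} \nm{v}_W $ by definition of the dual norm, while the term $ \dual{f, \D_{T-}^{-\alpha/2} v}_{\widehat W} $ is bounded by $ \nm{f}_{\widehat W^*} \nm{\D_{T-}^{-\alpha/2} v}_{\widehat W} $, so it remains to show $ \nm{\D_{T-}^{-\alpha/2} v}_{\widehat W} \leqslant C_\alpha \nm{v}_W $. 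This last bound I would obtain by treating the two defining components of $ \widehat W $ separately: the right-sided version of \cref{lem:regu-frac-int-leq} raises $ {}^0\!H^{\alpha/4}(0,T;L^2(\Omega)) $ to $ {}^0\!H^{3\alpha/4}(0,T;L^2(\Omega)) $, and the right-sided version of \cref{lem:regu-frac-int-general} raises $ {}^0\!H^{-\alpha/4}(0,T;\dot H^1(\Omega)) $ to $ {}^0\!H^{\alpha/4}(0,T;\dot H^1(\Omega)) $, both with constants depending only on $ \alpha $. With all three hypotheses verified, Lax-Milgram delivers the unique solution and the stated stability estimate. The main obstacle is the coercivity step, and specifically keeping careful track of the sign conditions so that \cref{lem:coer} provides a positive lower bound.
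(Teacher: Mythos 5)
Your proposal is correct and follows essentially the same route as the paper, which simply states that the result follows from \cref{lem:regu-frac-int-leq,lem:regu-frac-deriv,lem:coer} and the Lax--Milgram theorem by "a routine argument"; you have filled in that routine argument (coercivity via \cref{lem:coer} after diagonalizing in the eigenbasis, boundedness of the right-hand side via the right-sided mapping properties of $\D_{T-}^{-\alpha/2}$) with the correct lemmas and the correct sign conditions.
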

\begin{rem}
    If $u_0\in L^2(\Omega)$ and $u_1\in \dot H^{-1}(\Omega)$, then a simple calculation
    yields that $    \D_{0+}^{\alpha/2}(u_0+tu_1)\in W^*$.
    Therefore the weak solution is well-defined by \cref{eq:weak_sol} for $u_0\in L^2(\Omega),u_1\in \dot H^{-1}(\Omega)$ and $ f\in \widehat W^*$.
\end{rem}

Next, we employ the Galerkin method to investigate the regularity of the solution to
problem \cref{eq:weak_sol} in the case $u_0=u_1=0$.
Let us first define $ y \in {}_0H^{\alpha/4}(0,T) $ by that
\begin{equation}
\label{eq:frac_ode_weak}
\big\langle\D_{0+}^{\alpha/2 }y,z  \big\rangle_{ {}^0\!H^{\alpha/4}(0,T)} +
\lambda   \big\langle\D_{0+}^{-\alpha/2}y,z  \big\rangle_{(0,T)}
=   \big\langle\D_{0+}^{-\alpha/2}g,z  \big\rangle_{{}^0\!H^{\alpha/4}(0,T)}
\end{equation}
for all $ z \in {}_0H^{\alpha/4}(0,T) $, where $ g \in{}_0H^{-3\alpha/4}(0,T) $ and $
\lambda $ is a positive constant. Similar to problem \cref{eq:weak_sol}, problem
\cref{eq:frac_ode_weak} admits a unique solution $ y \in {}_0H^{\alpha/4}(0,T) $ and
\begin{equation*}
  \nm{y}_{{}_0H^{\alpha/4}(0,T)} +
  \lambda^{1/2} \nm{y}_{{}_0H^{-\alpha/4}(0,T)}
  \leqslant C_\alpha \nm{g}_{ {}_0H^{-3\alpha/4}(0,T) }.
\end{equation*}

\begin{lem}
    \label{lem:regu-ode}
    If $ \gamma\geqslant-3\alpha/4$ and $ g \in {}_0H^{\gamma}(0,T)$, then
    \begin{equation}
    \label{eq:ode-strong}
    \D_{0+}^{\alpha+\gamma} y + \lambda \D_{0+}^\gamma y =
    \D_{0+}^\gamma g,
    \end{equation}
    \begin{equation}
    \label{eq:regu-ode}
    \nm{y}_{{}_0H^{\alpha+\gamma}(0,T)} +
    \lambda\nm{y}_{{}_0H^\gamma(0,T)}
    \leqslant C_{\alpha,\gamma}
    \nm{g}_{{}_0H^{\gamma}(0,T)}.
    \end{equation}
    Moreover, if $ 1-\alpha \leqslant \gamma<1/2 $ then
    \begin{equation}
    \label{eq:regu-ode-C}
    \lambda^{1+(\gamma-1/2)/\alpha} \nm{y}_{C[0,T]}
    \leqslant C_{\alpha,\gamma,T} \nm{g}_{{}_0H^\gamma(0,T)},
    \end{equation}
    and if $\gamma=1/2$ then
        \begin{equation}
    \label{eq:regu-ode-C-limit-case}
    \lambda^{1-\epsilon/2} \nm{y}_{C[0,T]}
    \leqslant       \frac{C_{\alpha,T}}{
        \epsilon} \nm{g}_{{}_0H^{1/2}(0,T)},
    \end{equation}
    for all $0<\epsilon\leqslant2/(2\alpha+1)$.
\end{lem}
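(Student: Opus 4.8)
The plan is to pass from the variational problem \cref{eq:frac_ode_weak} to the pointwise identity \cref{eq:ode-strong}, read off the two-sided estimate \cref{eq:regu-ode}, and finally feed \cref{eq:regu-ode} into the embedding inequalities \cref{lem:Nirenberg,lem:trace} to obtain the $C[0,T]$ bounds \cref{eq:regu-ode-C,eq:regu-ode-C-limit-case}. First I would rewrite the weak form. Since $\alpha/4<1/2$, \cref{lem:3.13} gives $\dual{\D_{0+}^{\alpha/2}y,z}_{{}^0\!H^{\alpha/4}(0,T)}=\dual{\D_{0+}^{\alpha/4}y,\D_{T-}^{\alpha/4}z}_{(0,T)}$, and \cref{lem:basic-frac,lem:basic-2} turn the middle term of \cref{eq:frac_ode_weak} into $\lambda\dual{\D_{0+}^{-\alpha/4}y,\D_{T-}^{-\alpha/4}z}_{(0,T)}$; because $z$ ranges over all of ${}_0H^{\alpha/4}(0,T)={}^0\!H^{\alpha/4}(0,T)$, this identifies \cref{eq:frac_ode_weak} with the single identity $\D_{0+}^{\alpha/2}y+\lambda\D_{0+}^{-\alpha/2}y=\D_{0+}^{-\alpha/2}g$ in ${}_0H^{-\alpha/4}(0,T)$. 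For data $g$ smooth enough that $y$ is regular, applying $\D_{0+}^{\gamma+\alpha/2}$ and invoking \cref{lem:DD,lem:basic-frac,lem:compose-ID} collapses this to \cref{eq:ode-strong}; in particular the case $\gamma=0$ reads $\D_{0+}^\alpha y+\lambda y=g$.

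For \cref{eq:regu-ode} I would argue first for smooth $g$ and then pass to the limit. The $\lambda$-free bound $\nm{y}_{{}_0H^{\alpha+\gamma}(0,T)}\le C\nm{g}_{{}_0H^\gamma(0,T)}$ I would obtain by a shift: set $\tilde g:=\D_{0+}^{\gamma+3\alpha/4}g$, which lies in ${}_0H^{-3\alpha/4}(0,T)$ with $\nm{\tilde g}_{{}_0H^{-3\alpha/4}(0,T)}$ comparable to $\nm{g}_{{}_0H^\gamma(0,T)}$; using \cref{lem:DD} and uniqueness for \cref{eq:frac_ode_weak} one checks that $\D_{0+}^{\gamma+3\alpha/4}y$ is precisely the weak solution with datum $\tilde g$, so the basic estimate quoted before \cref{lem:regu-ode} applied to $\tilde g$ yields the claimed bound after identifying $\nm{\D_{0+}^{\gamma+3\alpha/4}y}_{{}_0H^{\alpha/4}(0,T)}$ with $\nm{y}_{{}_0H^{\alpha+\gamma}(0,T)}$. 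The $\lambda$-weighted bound then follows from \cref{eq:ode-strong} written as $\lambda\D_{0+}^\gamma y=\D_{0+}^\gamma g-\D_{0+}^{\alpha+\gamma}y$: taking the $L^2(0,T)$ norm and using \cref{lem:regu-frac-deriv,lem:regu-frac-int-general} to pass between $\nm{\D_{0+}^\gamma\cdot}_{L^2(0,T)}$ and $\nm{\cdot}_{{}_0H^\gamma(0,T)}$ gives $\lambda\nm{y}_{{}_0H^\gamma(0,T)}\le C(\nm{g}_{{}_0H^\gamma(0,T)}+\nm{y}_{{}_0H^{\alpha+\gamma}(0,T)})\le C\nm{g}_{{}_0H^\gamma(0,T)}$. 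A density argument in ${}_0H^\gamma(0,T)$ then extends \cref{eq:ode-strong,eq:regu-ode} to all admissible $g$.

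For the endpoint estimates I would use \cref{eq:regu-ode} in the two forms $\nm{y}_{{}_0H^{\alpha+\gamma}(0,T)}\le C\nm{g}_{{}_0H^\gamma(0,T)}$ and $\nm{y}_{{}_0H^\gamma(0,T)}\le C\lambda^{-1}\nm{g}_{{}_0H^\gamma(0,T)}$. When $1-\alpha\le\gamma<1/2$ the exponents $\alpha+\gamma\ge1$ and $\gamma<1/2$ are admissible in \cref{lem:Nirenberg} (after rescaling $(0,T)$ to $(0,1)$, which produces the $T$-dependence); the interpolation weights are $(1/2-\gamma)/\alpha$ and $(\alpha+\gamma-1/2)/\alpha$, so substituting the two bounds makes the powers of $\nm{g}_{{}_0H^\gamma(0,T)}$ add to $1$ and the power of $\lambda$ equal $-(1+(\gamma-1/2)/\alpha)$, which is \cref{eq:regu-ode-C}. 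For $\gamma=1/2$ the lower index is no longer admissible in \cref{lem:Nirenberg}, so I would instead apply \cref{lem:trace} with $\beta=\alpha+1/2$ and parameter $\epsilon/2$; the same substitution gives $\lambda^{1-\epsilon/2}\nm{y}_{C[0,T]}\le(C_{\alpha,T}/\epsilon)\nm{g}_{{}_0H^{1/2}(0,T)}$, and the admissible range $0<\epsilon\le 2/(2\alpha+1)$ matches the requirement $\epsilon/2\le 1/(2\alpha+1)$ of \cref{lem:trace}.

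I expect the main obstacle to be the sharp $\lambda$-dependence in \cref{eq:regu-ode}. The coercivity behind the basic estimate only furnishes a factor $\lambda^{1/2}$ on a norm one order weaker, so the exact $\lambda^{1}$ on $\nm{y}_{{}_0H^\gamma(0,T)}$ must be manufactured by combining the $\lambda$-independent high-order bound (via the shift to ${}_0H^{-3\alpha/4}(0,T)$) with the strong equation \cref{eq:ode-strong}. The accompanying nuisance is that every composition of fractional operators used above is valid only under the order restrictions in \cref{lem:3.13,lem:DD}, which fail for instance for $\D_{0+}^{\alpha/2}$ acting on ${}_0H^{-\alpha/4}(0,T)$ when $\alpha>2/3$; this is exactly why I would establish the identities for smooth data first, where the solution is regular enough to license each composition, and only then invoke density.
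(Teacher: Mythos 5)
Your proposal is correct, and while the endgame (feeding \cref{eq:regu-ode} into \cref{lem:Nirenberg} for $1-\alpha\leqslant\gamma<1/2$ and into \cref{lem:trace} with parameter $\epsilon/2$ for $\gamma=1/2$) is exactly what the paper does, your route to the core estimate \cref{eq:regu-ode} is genuinely different. The paper shifts by $\alpha/2+\gamma$, i.e.\ it introduces $w=\D_{0+}^{\alpha/2+\gamma}y$ solving the auxiliary problem \cref{eq:weak-w} with the $L^2$ datum $\D_{0+}^\gamma g$; since Lax--Milgram then only yields the intermediate weights $\lambda^{1/4}\nm{w}_{{}_0H^{\alpha/4}}+\lambda^{3/4}\nm{w}_{{}_0H^{-\alpha/4}}\lesssim\nm{\D_{0+}^\gamma g}_{L^2}$, the top-order bound $\nm{w}_{{}_0H^{\alpha/2}}\lesssim\nm{\D_{0+}^\gamma g}_{L^2}$ has to be manufactured by testing the strong $w$-equation with $\D_{0+}^{\alpha/2}w$ and controlling the cross term $\lambda\langle\D_{0+}^{\alpha/2}w,\D_{0+}^{-\alpha/2}w\rangle$ via \cref{lem:dual-ID}. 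You instead shift by $\gamma+3\alpha/4$, placing the datum exactly in the borderline space ${}_0H^{-3\alpha/4}(0,T)$ where the basic Lax--Milgram estimate is already $\lambda$-uniform at the top order, so $\nm{y}_{{}_0H^{\alpha+\gamma}}\lesssim\nm{g}_{{}_0H^\gamma}$ drops out directly and \cref{lem:dual-ID} and the whole energy step are bypassed; the $\lambda$-weighted low-order bound is then read off from \cref{eq:ode-strong} exactly as the paper reads it off from \cref{eq:ode-w}. The trade-off is in how the conjugation identity is justified: the paper goes in the integrating direction, defining $y:=\D_{0+}^{-\alpha/2-\gamma}w$ from the auxiliary solution and verifying it solves \cref{eq:frac_ode_weak}, which needs no approximation, whereas your differentiating direction ($\tilde y=\D_{0+}^{\gamma+3\alpha/4}y$) requires the smooth-data-plus-density detour you correctly flag. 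That detour is routine (the uniform bounds pass to the limit by weak compactness and \cref{eq:ode-strong} survives distributionally), but you could eliminate it entirely by mimicking the paper's orientation: solve the problem with datum $\D_{0+}^{\gamma+3\alpha/4}g$ first and integrate back, invoking uniqueness of \cref{eq:frac_ode_weak}.
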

\begin{proof}
    Firstly, let us consider the following problem: find $ w \in {}_0H^{\alpha/4}(0,T)$ such that
    \begin{equation}\label{eq:weak-w}
    \big\langle\D_{0+}^{\alpha/2} w, z  \big\rangle_{{}^0\!H^{\alpha/4}(0,T)} +
    \lambda \big\langle\D_{0+}^{-\alpha/2}w, z \big\rangle_{(0,T)} =
    \big\langle\D_{0+}^{\gamma}g, z \big\rangle_{(0,T)}
    \end{equation}
    for all $ z \in {}_0H^{\alpha/4}(0,T) $.
    Since using \cref{lem:regu-frac-deriv} yields
    \begin{equation}
    \label{eq:ineq-g}
    \nm{\D_{0+}^{\gamma}g}_{L^2(0,T)}\leqslant
    C_\gamma \nm{g}_{{}_0H^{\gamma}(0,T)},
    \end{equation}
    by \cref{lem:regu-frac-int-leq,lem:regu-frac-deriv,lem:coer} and the Lax-Milgram theorem, we claim that problem
    \cref{eq:weak-w} admits a unique solution $ w \in {}_0H^{\alpha/4}(0,T) $. In addition, inserting $z=w$ into \cref{eq:weak-w} gives
    \begin{equation}
    \label{eq:shit}
    \nm{w}_{{}_0H^{\alpha/4}(0,T)}^2+
    \lambda \nm{w}_{{}_0H^{-\alpha/4}(0,T)}^2
    \leqslant C_\alpha
    \nm{\D_{0+}^{\gamma}g}_{L^2(0,T)}
    \nm{w}_{L^2(0,T)}.
    \end{equation}
    Since \cref{lem:inter_space} and \cite[Corollary 1.7]{Lunardi2018} imply
    \[
    \nm{w}_{L^2(0,T)}\leqslant C_\alpha
    \nm{w}_{{}_0H^{\alpha/4}(0,T)}^{1/2}
    \nm{w}_{{}_0H^{-\alpha/4}(0,T)}^{1/2},
    \]
    a simple calculation gives, by \cref{eq:shit}, that
    \begin{equation}
    \label{eq:shit-10}
    \lambda^{1/4}\nm{w}_{{}_0H^{\alpha/4}(0,T)} +
    \lambda^{3/4} \nm{w}_{{}_0H^{-\alpha/4}(0,T)} \leqslant
    C_\alpha
     \nm{\D_{0+}^\gamma g}_{L^2(0,T)}.
    \end{equation}
    Observe that by \cref{eq:weak-w}
    \begin{equation} \label{eq:ode-w}
    \D_{0+}^{\alpha/2} w + \lambda \D_{0+}^{-\alpha/2} w =
    \D_{0+}^{\gamma} g \quad \text{ in } L^{2}(0,T),
    \end{equation}
    and then multiplying $\D_{0+}^{\alpha/2}w$ on both sides of the above equality and integrating on $(0,T)$ yields that
    \[
        \begin{split}
    {}&\big\|\D_{0+}^{\alpha/2} w   \big\|_{L^2(0,T)}^2 =
    \big\langle\D_{0+}^\gamma g, \D_{0+}^{\alpha/2} w   \big\rangle_{(0,T)} -
    \lambda     \big\langle\D_{0+}^{\alpha/2} w, \D_{0+}^{-\alpha/2} w  \big\rangle_{(0,T)} \\
    \leqslant &
    \nm{\D_{0+}^\gamma g}_{L^2(0,T)}
    \big\|\D_{0+}^{\alpha/2} w\big\|_{L^2(0,T)} +
    C_\alpha \lambda \nm{w}_{{}_0H^{-\alpha/4}(0,T)}
    \nm{w}_{{}_0H^{\alpha/4}(0,T)},
    \end{split}
    \]
    by \cref{lem:dual-ID}.
    Thus using \cref{eq:shit-10} and Young's inequality with $\epsilon$ yields
    \[
    \big\|\D_{0+}^{\alpha/2} w  \big\|_{L^2(0,T)}
    \leqslant C_\alpha \nm{\D_{0+}^\gamma g}_{L^2(0,T)},
    \]
    and it follows from \cref{eq:ode-w} that
    \[
    \lambda \big\|\D_{0+}^{-\alpha/2} w\big\|_{L^2(0,T)}  =
    \big\|\D_{0+}^{\alpha/2} w\,-\,\D_{0+}^{\gamma}g\big\|_{L^2(0,T)} \\
    \leqslant C_\alpha \nm{\D_{0+}^{\gamma}g}_{L^2(0,T)}.
    \]
    Hence, by \cref{lem:regu-extra-D,lem:regu-frac-int-general}, combining
    \cref{eq:ineq-g} and the above two estimates yields
    \begin{equation}\label{eq:regu-w}
    \big\|w\big\|_{{}_0H^{ \alpha/2}(0,T)}+
    \lambda \nm{w}_{{}_0H^{-\alpha/2}(0,T)}
    \leqslant C_{\alpha,\gamma} \nm{g}_{{}_0H^{\gamma}(0,T)}.
    \end{equation}

    Secondly, let us prove \cref{eq:ode-strong}. By the facts $ w \in
    {}_0H^{\alpha/2}(0,T) $ and \cref{lem:DD}, applying
    $ \D_{0+}^{-\alpha/2-\gamma} $ on both sides of \cref{eq:ode-w} yields
    \[
    \big(       \D_{0+}^{\alpha/2}+\lambda\D_{0+}^{-\alpha/2}\big)\D_{0+}^{-\alpha/2-\gamma}w = \D_{0+}^{-\alpha/2}g.
    \]
Therefore, $ y := \D^{-\alpha/2-\gamma}_{0+} w $ is the solution to problem
    \cref{eq:frac_ode_weak}, and using \cref{lem:regu-frac-int-leq,lem:regu-frac-int-general,eq:regu-w}
    proves \cref{eq:regu-ode}.
    Hence by the fact $ y \in {}_0H^{\alpha+\gamma}(0,T) $, the relation $w= \D^{\alpha/2+\gamma}_{0+} y$, \cref{lem:DD} and \cref{eq:ode-w}, we obtain that
    \[
    \D_{0+}^{\alpha+\gamma} y + \lambda \D_{0+}^{\gamma} y =    \D_{0+}^{\gamma} g,
    \]
    which proves \cref{eq:ode-strong}.

    Finally, it remains to prove \cref{eq:regu-ode-C,eq:regu-ode-C-limit-case}.
    If $1-\alpha\leqslant\gamma<1/2$, then
    by \cref{lem:Nirenberg,eq:regu-ode},
    \begin{align*}
        & \lambda^{1+(\gamma-1/2)/\alpha} \nm{y}_{C[0,T]} \\
        \leqslant{} &
        C_{\alpha,\gamma,T}
        \nm{y}_{{}_0H^{\alpha+\gamma}(0,T)}^{(1/2-\gamma)/\alpha}
        \big(
        \lambda \nm{y}_{{}_0H^\gamma(0,T)}
        \big)^{1+(\gamma-1/2)/\alpha} \\
        \leqslant{} &
        C_{\alpha,\gamma,T}\big(
        \nm{y}_{{}_0H^{\alpha+\gamma}(0,T)} +
        \lambda \nm{y}_{{}_0H^\gamma(0,T)}
        \big)\\
        \leqslant {}& C_{\alpha,\gamma,T}
        \nm{g}_{{}_0H^{\gamma}(0,T)},
    \end{align*}
    which proves \cref{eq:regu-ode-C}. If $\gamma=1/2$, then using \cref{lem:trace,eq:regu-ode} gives
            \begin{align*}
    \lambda^{1-\epsilon/2} \nm{y}_{C[0,T]} \leqslant {}&
    \frac{C_{\alpha,T}}{\epsilon}
        \big(
    \lambda \nm{y}_{{}_0H^{1/2}(0,T)}
    \big)^{1-\epsilon/2}
        \nm{y}_{{}_0H^{\alpha+1/2}(0,T)}^{ \epsilon/2}\\
    \leqslant {}&
    \frac{C_{\alpha,T}}{\epsilon}\nm{g}_{{}_0H^{1/2}(0,T)},
    \end{align*}
    for all $0<\epsilon\leqslant2/(2\alpha+1)$. This proves \cref{eq:regu-ode-C-limit-case} and thus completes the proof of this lemma.
\end{proof}
\begin{lem}
    \label{lem:basic_weak_sol}
    If $u_0=u_1=0$ and
    \[
     f \in  {}_0H^{-3\alpha/4}(0,T;L^{2}(\Omega))
     \cup
     {}_0H^{-\alpha/4}(0,T;\dot H^{-1}(\Omega)),
    \]
    then the solution to problem \cref{eq:weak_sol} is given by that
    \begin{equation}
    \label{eq:y}
    u(t) = \sum_{n=0}^{\infty}y_n(t)\phi_n, \quad 0 \leqslant t \leqslant T,
    \end{equation}
    where $ y_n $ satisfies
    \begin{equation}\small
    \label{eq:frac-ode}
    \begin{aligned}
    {}&\big\langle\D_{0+}^{\alpha/2 }y_n,z          \big\rangle_{ {}^0\!H^{\alpha/4}(0,T)} \!+\!
    \lambda_n           \big\langle\D_{0+}^{-\alpha/2}y_n,z\big\rangle_{(0,T)}
    \!=\!
    \Big\langle \D_{0+}^{-\alpha/2}\big\langle f,\phi_n\big\rangle _{\dot H^1(\Omega)},
    z\Big\rangle_{(0,T)}
    \end{aligned}
    \end{equation}
    for all $ z\in {}_0H^{\alpha/4}(0,T) $.
\end{lem}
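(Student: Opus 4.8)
The weak solution $u\in W$ exists and is unique by \cref{thm:basic_weak_sol}, so it suffices to identify its spatial Fourier components. Since $u\in{}_0H^{\alpha/4}(0,T;L^2(\Omega))$, it admits a unique expansion $u=\sum_{n=0}^\infty u_n\phi_n$ with $u_n\in{}_0H^{\alpha/4}(0,T)$. The plan is to show that each $u_n$ solves the scalar problem \cref{eq:frac-ode}; since that problem is exactly \cref{eq:frac_ode_weak} with $\lambda=\lambda_n$ and $g=\langle f,\phi_n\rangle_{\dot H^1(\Omega)}$, it possesses a unique solution $y_n$, whence $u_n=y_n$ and \cref{eq:y} follows. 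To decouple the equation I would test the weak formulation \cref{eq:weak_sol} — in which the term $\D_{0+}^{\alpha/2}(u_0+tu_1)$ vanishes because $u_0=u_1=0$ — against the separable functions $v=z\phi_m$ with $z\in{}_0H^{\alpha/4}(0,T)$.

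Two preliminary checks are needed. First, $v=z\phi_m\in W$: a direct computation gives $\nm{z\phi_m}_W^2=\nm{z}_{{}_0H^{\alpha/4}(0,T)}^2+\lambda_m\nm{z}_{{}_0H^{-\alpha/4}(0,T)}^2$, using $\nm{\phi_m}_{L^2(\Omega)}=1$ and $\nm{\phi_m}_{\dot H^1(\Omega)}^2=\lambda_m$, which is finite by the continuous embedding ${}_0H^{\alpha/4}(0,T)\hookrightarrow{}_0H^{-\alpha/4}(0,T)$. Second, $f_m:=\langle f,\phi_m\rangle_{\dot H^1(\Omega)}\in{}_0H^{-3\alpha/4}(0,T)$, so that $y_m$ is well-defined: if $f\in{}_0H^{-3\alpha/4}(0,T;L^2(\Omega))$ this is immediate, since $f_m$ is the $m$-th temporal component of $f$; if $f\in{}_0H^{-\alpha/4}(0,T;\dot H^{-1}(\Omega))$ then $f_m\in{}_0H^{-\alpha/4}(0,T)\hookrightarrow{}_0H^{-3\alpha/4}(0,T)$.

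The core is the term-by-term reduction. Inserting $v=z\phi_m$, the first term of \cref{eq:weak_sol} collapses, via the vector-valued duality pairing and the orthonormality of $\{\phi_n\}$, to $\dual{\D_{0+}^{\alpha/2}u_m,z}_{{}^0\!H^{\alpha/4}(0,T)}$. For the second term I would use $\nabla\D_{T-}^{-\alpha/4}(z\phi_m)=(\D_{T-}^{-\alpha/4}z)\nabla\phi_m$ together with $\dual{\nabla\phi_n,\nabla\phi_m}_\Omega=\lambda_n\delta_{nm}$ to obtain $\lambda_m\dual{\D_{0+}^{-\alpha/4}u_m,\D_{T-}^{-\alpha/4}z}_{(0,T)}$, and then invoke the fractional integration-by-parts identity of \cref{lem:basic-2} followed by the semigroup property of \cref{lem:basic-frac} to rewrite it as $\lambda_m\dual{\D_{0+}^{-\alpha/2}u_m,z}_{(0,T)}$. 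The right-hand side $\dual{f,(\D_{T-}^{-\alpha/2}z)\phi_m}_{\widehat W}$ reduces to the scalar pairing $\dual{f_m,\D_{T-}^{-\alpha/2}z}_{(0,T)}$, which equals $\dual{\D_{0+}^{-\alpha/2}f_m,z}_{(0,T)}$. Collecting the three pieces shows that $u_m$ satisfies precisely \cref{eq:frac-ode}, and uniqueness of the scalar problem forces $u_m=y_m$, which proves \cref{eq:y}.

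I expect the main obstacle to be the careful bookkeeping of the mixed space-time duality pairings, above all on the right-hand side: since $f$ is a priori only known to lie in $\widehat W^*$, one must justify in both branches of the hypothesis that pairing it against the separable function $(\D_{T-}^{-\alpha/2}z)\phi_m$ genuinely extracts the scalar quantity $\dual{\D_{0+}^{-\alpha/2}f_m,z}_{(0,T)}$, with every fractional integral landing in the spaces demanded by \cref{lem:basic-2,lem:regu-frac-int-general}. A constructive alternative is to set $\hat u:=\sum_n y_n\phi_n$ and verify directly that $\hat u\in W$ solves \cref{eq:weak_sol}; then the obstacle migrates to the summability $\sum_n\big(\nm{y_n}_{{}_0H^{\alpha/4}(0,T)}^2+\lambda_n\nm{y_n}_{{}_0H^{-\alpha/4}(0,T)}^2\big)<\infty$, which in the branch $f\in{}_0H^{-\alpha/4}(0,T;\dot H^{-1}(\Omega))$ requires the sharper $\lambda_n$-weighted bound $\nm{y_n}_{{}_0H^{\alpha/4}(0,T)}\leqslant C_\alpha\lambda_n^{-1/2}\nm{f_n}_{{}_0H^{-\alpha/4}(0,T)}$, obtained by testing the scalar problem with $y_n$ and estimating the load through the ${}_0H^{-\alpha/4}(0,T)$ duality rather than the ${}_0H^{-3\alpha/4}(0,T)$ one.
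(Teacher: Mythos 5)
Your proposal is correct, but it runs the argument in the opposite direction from the paper. The paper's proof is constructive: it defines the candidate $u:=\sum_n y_n\phi_n$ directly from the scalar solutions of \cref{eq:frac-ode}, establishes the summability
$\sum_{n}\big(\nm{y_n}_{{}_0H^{\alpha/4}(0,T)}^2+\lambda_n\nm{y_n}_{{}_0H^{-\alpha/4}(0,T)}^2\big)\leqslant C_\alpha\nm{f}_{{}_0H^{-3\alpha/4}(0,T;L^2(\Omega))}^2$
to conclude that this series lies in $W$, and then verifies \cref{eq:weak_sol} for an \emph{arbitrary} $v=\sum_n v_n\phi_n\in W$ via exactly the three term-by-term identities you list; uniqueness from \cref{thm:basic_weak_sol} then finishes. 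You instead start from the solution $u\in W$ already furnished by \cref{thm:basic_weak_sol}, test only against the separable functions $z\phi_m$, and identify each temporal component with $y_m$ through the uniqueness of the scalar problem \cref{eq:frac_ode_weak}. Your route gets membership in $W$ for free and so avoids the summability estimate in the main line; the price is the (routine but necessary) verification that $z\phi_m\in W$ and that the mixed space-time duality pairings genuinely decouple, both of which you carry out or correctly flag. Your ``constructive alternative'' is precisely the paper's proof, and your observation that the branch $f\in{}_0H^{-\alpha/4}(0,T;\dot H^{-1}(\Omega))$ then requires the $\lambda_n$-weighted bound $\nm{y_n}_{{}_0H^{\alpha/4}(0,T)}\leqslant C_\alpha\lambda_n^{-1/2}\nm{f_n}_{{}_0H^{-\alpha/4}(0,T)}$, obtained by estimating the load through the ${}_0H^{-\alpha/4}(0,T)$ duality, is exactly the point the paper compresses into ``the proof of the case $f\in{}_0H^{-\alpha/4}(0,T;\dot H^{-1}(\Omega))$ is similar.''
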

\begin{proof}
    Let us first consider the case that $    f \in {}_0H^{-3\alpha/4}(0,T;L^{2}(\Omega)) $.
    Similar to problem \cref{eq:frac_ode_weak}, problem \cref{eq:frac-ode} admits a
    unique solution $ y_n \in {}_0H^{\alpha/4}(0,T) $, and
    \[
    \nm{y_n}_{{}_0H^{\alpha/4}(0,T)} +
    \lambda_n\|y_n\|_{{}_0H^{-\alpha/4}(0,T)}
    \leqslant C_{\alpha}
    \big\|\dual{f,\phi_n}_{\dot H^1(\Omega)}\big\|_{ {}_0H^{-3\alpha/4}(0,T)}.
    \]
    Since
    \[
\begin{split}
    \sum_{n=0}^\infty
\big\|\dual{f,\phi_n}_{\dot H^1(\Omega)}
\big\|_{ {}_0H^{-3\alpha/4}(0,T)}^2={}&
    \sum_{n=0}^\infty
\big\|\dual{f,\phi_n}_{L^2(\Omega)}
\big\|_{ {}_0H^{-3\alpha/4}(0,T)}^2\\
= {}&\nm{f}_{{}_0H^{-3\alpha/4}(0,T;L^{2}(\Omega))}^2,
\end{split}
    \]
    it follows that
    \[
\begin{split}
\sum_{n=0}^\infty \left(
\nm{y_n}_{{}_0H^{\alpha/4}(0,T)}^2+
\lambda_n\|y_n\|_{{}_0H^{-\alpha/4}(0,T)}^2
\right)
\leqslant {}&C_{\alpha}
\nm{f}_{{}_0H^{-3\alpha/4}(0,T;L^{2}(\Omega))}^2.
\end{split}
    \]
    Therefore, $ u $ defined by \cref{eq:y} belongs to $W$ and satisfies that
    \[
    \nm{u}_W
    \leqslant C_{\alpha}
 \nm{f}_{{}_0H^{-3\alpha/4}(0,T;L^{2}(\Omega))}.
    \]

    Next, let us verify that $ u $ is the solution to problem \cref{eq:weak_sol}. For
    any $ v \in W $, there exists a unique decomposition $ v = \sum_{n=0}^\infty v_n
    \phi_n $, and
    \[
    \sum_{n=0}^\infty \Big(
    \lambda_n\|v_n\|_{{}_0H^{-\alpha/4}(0,T)}^2
    +   \nm{v_n}_{{}_0H^{\alpha/4}(0,T)}^2
    \Big) = \nm{v}_W^2.
    \]
    It is evident that
    \begin{align*}
        \big\langle \D_{0+}^{\alpha/2} u, v \big\rangle _{H^{\alpha/4}(0,T;L^2(\Omega))} & =
    \sum_{n=0}^\infty  \big\langle \D_{0+}^{\alpha/2} y_n, v_n \big\rangle _{H^{\alpha/4}(0,T)},\\
    \big\langle
    \nabla \D_{0+}^{-\alpha/4} u, \nabla \D_{T-}^{-\alpha/4} v
    \big\rangle_{ \Omega\times(0,T)} & =
    \sum_{n=0}^\infty \lambda_n  \big\langle \D_{0+}^{-\alpha/2}y_n, v_n \big\rangle _{(0,T)}.
    \end{align*}
    Since $ f \in {}_0H^{-3\alpha/4}(0,T;L^{2}(\Omega)) \subset \widehat W^* $, we also have
    \begin{align*}
    \big\langle f, \D_{T-}^{-\alpha/2} v \big\rangle _{\widehat W}
    ={}&
    \sum_{n=0}^\infty   \Big\langle \D_{0+}^{-\alpha/2}\big\langle f,\phi_n\big\rangle_{\dot H^1(\Omega)},
    v_n\Big\rangle_{(0,T)}.
    \end{align*}
    Combining the above three equations and \cref{eq:frac-ode} proves that $ u $ given
    by \cref{eq:y} fulfills \cref{eq:weak_sol} for all $ v \in W $, and therefore it
    is indeed the solution to problem \cref{eq:weak_sol}. Since the proof of the case that $     f \in {}_0H^{-\alpha/4}(0,T;\dot H^{-1}(\Omega)) $ is similar. This completes the proof of
    the lemma.
\end{proof}
Combining \cref{lem:regu-ode,lem:basic_weak_sol},
we readily conclude the following regularity results for the weak solution $ u $ to
problem \cref{eq:model}.
\begin{thm}
  \label{thm:regu-pde}
  Assume that $\beta\geqslant0,\gamma\geqslant-3\alpha/4$ or $\beta\geqslant-1,\gamma\geqslant-\alpha/4$. If $u_0=u_1=0$ and $ f \in {}_0H^\gamma(0,T;\dot H^\beta(\Omega)) $, then
    \begin{equation}
    \label{eq:regu_pde_strong}
    \D_{0+}^{\alpha+\gamma} u - \Delta \D_{0+}^{\gamma} u
    =  \D_{0+}^{\gamma}f\quad\text{in~~}L^2(0,T;\dot H^{\beta}(\Omega)),
  \end{equation}
  and
  \[
    \nm{u}_{{}_0H^{\alpha+\gamma}(0,T;\dot H^\beta(\Omega))} +
    \nm{u}_{{}_0H^\gamma(0,T;\dot H^{2+\beta}(\Omega))}
    \leqslant C_{\alpha,\gamma}
    \nm{f}_{{}_0H^\gamma(0,T;\dot H^\beta(\Omega))}.
  \]
  Moreover, if $ 1-\alpha \leqslant \gamma <1/2 $ then
    \begin{equation*}
    \nm{u}_{C([0,T];\dot H^{2+\beta+(2\gamma-1)/\alpha}(\Omega))}
    \leqslant C_{\alpha,\gamma,T}
    \nm{f}_{{}_0H^\gamma(0,T;\dot H^\beta(\Omega))},
  \end{equation*}
  and if $ \gamma =1/2 $ then
  \begin{equation*}
  \nm{u}_{C([0,T];\dot H^{2+\beta-\epsilon}(\Omega))}
  \leqslant     \frac{C_{\alpha,T}}{\epsilon}
  \nm{f}_{{}_0H^{1/2}(0,T;\dot H^\beta(\Omega))},
  \end{equation*}
  for all $0<\epsilon\leqslant2/(2\alpha+1)$.
\end{thm}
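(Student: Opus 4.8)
The plan is to reduce the vector-valued problem to the scalar fractional ODE already analyzed in \cref{lem:regu-ode}, via the spectral decomposition supplied by \cref{lem:basic_weak_sol}. First I would check that the data hypotheses of \cref{lem:basic_weak_sol} are met: in the case $\beta\geqslant 0,\ \gamma\geqslant-3\alpha/4$ the continuous embeddings $\dot H^\beta(\Omega)\hookrightarrow L^2(\Omega)$ and ${}_0H^\gamma(0,T)\hookrightarrow{}_0H^{-3\alpha/4}(0,T)$ give $f\in{}_0H^{-3\alpha/4}(0,T;L^2(\Omega))$, while in the case $\beta\geqslant-1,\ \gamma\geqslant-\alpha/4$ they give $f\in{}_0H^{-\alpha/4}(0,T;\dot H^{-1}(\Omega))$; either way \cref{lem:basic_weak_sol} applies and yields $u=\sum_{n=0}^\infty y_n\phi_n$, where each $y_n\in{}_0H^{\alpha/4}(0,T)$ solves \cref{eq:frac-ode}, i.e.\ problem \cref{eq:frac_ode_weak} with $\lambda=\lambda_n$ and right-hand datum $g=g_n$ the $n$-th temporal coefficient of $f$. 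By the definition of the vector-valued norm, these coefficients obey the Parseval identity $\sum_{n}\lambda_n^{\beta}\nm{g_n}_{{}_0H^\gamma(0,T)}^2=\nm{f}_{{}_0H^\gamma(0,T;\dot H^\beta(\Omega))}^2$.

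Next, for each fixed $n$ I would invoke \cref{lem:regu-ode} with $\lambda=\lambda_n$, $g=g_n$. The strong identity \cref{eq:ode-strong} reads $\D_{0+}^{\alpha+\gamma}y_n+\lambda_n\D_{0+}^{\gamma}y_n=\D_{0+}^{\gamma}g_n$; multiplying by $\phi_n$, using $-\Delta\phi_n=\lambda_n\phi_n$, and summing over $n$ formally gives \cref{eq:regu_pde_strong}. To make this rigorous in $L^2(0,T;\dot H^\beta(\Omega))$, one multiplies the squared estimate \cref{eq:regu-ode} by $\lambda_n^\beta$ and sums:
\[
\sum_{n}\lambda_n^\beta\nm{y_n}_{{}_0H^{\alpha+\gamma}(0,T)}^2
+\sum_{n}\lambda_n^{2+\beta}\nm{y_n}_{{}_0H^{\gamma}(0,T)}^2
\leqslant C_{\alpha,\gamma}^2\sum_{n}\lambda_n^\beta\nm{g_n}_{{}_0H^\gamma(0,T)}^2 .
\]
By the definitions of ${}_0H^{\alpha+\gamma}(0,T;\dot H^\beta(\Omega))$ and ${}_0H^{\gamma}(0,T;\dot H^{2+\beta}(\Omega))$ (the latter absorbing the factor $\lambda_n^2$ into the spatial exponent), the left-hand side is exactly $\nm{u}_{{}_0H^{\alpha+\gamma}(0,T;\dot H^\beta(\Omega))}^2+\nm{u}_{{}_0H^{\gamma}(0,T;\dot H^{2+\beta}(\Omega))}^2$, which combined with the Parseval identity gives the stated estimate and in particular $u\in{}_0H^{\gamma}(0,T;\dot H^{2+\beta}(\Omega))$. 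This finiteness guarantees that the partial sums converge in the relevant spaces, so termwise application of $\D_{0+}^{\alpha+\gamma}$ and $-\Delta\D_{0+}^{\gamma}$ is legitimate and \cref{eq:regu_pde_strong} holds.

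For the uniform-in-time estimates I would use the elementary bound $\nm{u}_{C([0,T];\dot H^s(\Omega))}^2=\sup_{t}\sum_{n}\lambda_n^s y_n(t)^2\leqslant\sum_{n}\lambda_n^s\nm{y_n}_{C[0,T]}^2$, valid since each summand is nonnegative. When $1-\alpha\leqslant\gamma<1/2$, squaring \cref{eq:regu-ode-C} and multiplying by $\lambda_n^\beta$ yields $\lambda_n^{2+\beta+(2\gamma-1)/\alpha}\nm{y_n}_{C[0,T]}^2\leqslant C_{\alpha,\gamma,T}^2\lambda_n^\beta\nm{g_n}_{{}_0H^\gamma(0,T)}^2$; choosing $s=2+\beta+(2\gamma-1)/\alpha$ and summing gives the first $C$-estimate, the Parseval identity controlling the right-hand side. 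The case $\gamma=1/2$ is identical with \cref{eq:regu-ode-C-limit-case} in place of \cref{eq:regu-ode-C} and $s=2+\beta-\epsilon$, the factor $1/\epsilon$ passing through the sum. Finiteness of $\sum_n\lambda_n^s\nm{y_n}_{C[0,T]}^2$ also shows the series converges in $C([0,T];\dot H^s(\Omega))$, so $u$ indeed has the claimed continuity. I expect no genuinely hard step: all the analytic difficulty is already packaged in \cref{lem:regu-ode}, and the only points demanding care are verifying that \cref{lem:basic_weak_sol} applies on the full parameter range, the bookkeeping of the powers of $\lambda_n$ when trading temporal regularity against the spatial exponent, and the justification of the sum/sup and sum/operator interchanges, all of which follow from the uniform estimates and the completeness of the spaces.
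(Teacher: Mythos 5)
Your proposal is correct and follows exactly the route the paper intends: the paper's entire "proof" is the single sentence "Combining \cref{lem:regu-ode,lem:basic_weak_sol}, we readily conclude\dots" (with the uniform convergence of the series in $C([0,T];\dot H^{s}(\Omega))$ relegated to \cref{rem:C0T}), and your write-up supplies precisely the omitted bookkeeping — verifying the hypotheses of \cref{lem:basic_weak_sol}, applying \cref{lem:regu-ode} mode by mode with $\lambda=\lambda_n$, and summing the squared estimates against the appropriate powers of $\lambda_n$ via Parseval.
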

\begin{rem}\label{rem:C0T}
    By \cref{lem:regu-ode,lem:basic_weak_sol}, if $
    1-\alpha\leqslant \gamma <1/2 $, then the series in \cref{eq:y} converges to $ u $
    in $ \dot H^{2+\beta+(2\gamma-1)/\alpha}(\Omega) $ uniformly on $ [0,T] $, so that
    $ u\in C([0,T]; \dot H^{2+\beta+(2\gamma-1)/\alpha}(\Omega)) $. Similarly, if
    $\gamma=1/2$, then $ u\in C([0,T]; \dot H^{2+\beta-\epsilon}(\Omega) ) $ for all
    $0<\epsilon\leqslant2/(2\alpha+1)$.
\end{rem}

\begin{rem}
  We observe that \cite[Theorem 4.21]{Bazhlekova2001} has already contained the
  regularity estimate
  \[
  \begin{split}
{}&  \nm{u}_{L^p(0,T;L^q(\Omega))} +
  \nm{\D_{0+}^\alpha u}_{L^p(0,T;L^q(\Omega))} +
  \nm{\Delta u}_{L^p(0,T;L^q(\Omega))} \\
  \leqslant{}&
  C_{\alpha,p,q}\nm{f}_{L^p(0,T;L^q(\Omega))},
  \end{split}
  \]
  where $ 1 < p,q < \infty $. In the case $ p =q= 2 $, this
  result implies
  \[
    \nm{u}_{{}_0H^\alpha(0,T;L^2(\Omega))} + \nm{u}_{L^2(0,T;\dot H^2(\Omega))}
    \leqslant C_{\alpha} \nm{f}_{L^2(0,T;L^2(\Omega))}.
  \]
\end{rem}
\begin{rem}
    Let us introduce a simple example to demonstrate that a smooth source term $ f $
    with $f(0)\neq0$ can not guarantee a smooth solution. For any $ \beta > 0 $, define the Mittag-Leffler function $
    E_{\alpha,\beta}(z) $ by
    \[
    E_{\alpha,\beta}(z) := \sum_{n=0}^\infty
    \frac{z^n}{\Gamma(n\alpha+\beta)},
    \quad z \in \mathbb C,
    \]
    and this function admits a growth estimate that \cite{Podlubny1998}
    \begin{equation}
    \label{eq:ml_growth}
    \snm{E_{\alpha,\beta}(-t)} \leqslant
    \frac{C_{\alpha,\beta}}{1+t},
    \quad t > 0.
    \end{equation}
    For any $ t \geqslant 0 $, let
    \begin{align*}
    w(t) &:= t^\alpha/\Gamma(\alpha+1), \\
    y(t) &:= -\lambda^{-1} E_{\alpha,1}(-\lambda t^\alpha) + \lambda^{-1},
    \end{align*}
    where $ \lambda $ is a positive constant, then a straightforward calculation yields
    \[
    \D_{0+}^\alpha y+\lambda y = 1, \quad t \geqslant 0.
    \]
    If $1<\alpha<3/2$, then by \cref{eq:ml_growth} we obtain
    \begin{align*}
    & \lambda^{2-3/\alpha} \nm{(y-w)''}_{L^2(0,T)}^2 \\
    ={} &
    \int_0^T \lambda^{4-3/\alpha} t^{4\alpha-4}
    E^2_{\alpha,2\alpha-1}(-\lambda t^\alpha) \, \mathrm{d}t \\
    \leqslant{} &
    C_\alpha \int_0^T \lambda^{4-3/\alpha} t^{4\alpha-4}
    (1+\lambda t^\alpha)^{-2} \, \mathrm{d}t \\
    \leqslant{} &
    C_\alpha \bigg(
    \int_0^{\lambda^{-1/\alpha}} \lambda^{4-3/\alpha} t^{4\alpha-4}
    \, \mathrm{d}t +
    \int_{\lambda^{-1/\alpha}}^\infty \lambda^{2-3/\alpha} t^{2\alpha-4}
    \, \mathrm{d}t
    \bigg) \\
    \leqslant{} &
    C_\alpha.
    \end{align*}
    If $3/2\leqslant\alpha<2$, then an analogous deduction gives
        \begin{align*}
    \lambda^{2-5/\alpha} \nm{(y-w)'''}_{L^2(0,T)}^2
    \leqslant{} &
    C_\alpha.
    \end{align*}
    Now, assume that $ u_0 = u_1 = 0 $. If $1<\alpha<3/2$ and $ f(t) = v \in \dot
    H^{3/\alpha-2}(\Omega) $, $ 0 \leqslant t \leqslant T $, then by the techniques used in
    the proof of \cref{thm:regu-pde} we obtain
    \[
    \nm{u - t^\alpha/\Gamma(\alpha+1) v}_{H^2(0,T;L^2(\Omega))}
    \leqslant C_\alpha \nm{v}_{\dot H^{3/\alpha-2}(\Omega)}.
    \]
    Similarly, if $3/2\leqslant\alpha<2$ and $ f(t) = v \in \dot
    H^{5/\alpha-2}(\Omega) $, $ 0 \leqslant t \leqslant T $, then
    \[
    \nm{u - t^\alpha/\Gamma(\alpha+1) v}_{H^3(0,T;L^2(\Omega))}
    \leqslant C_\alpha \nm{v}_{\dot H^{5/\alpha-2}(\Omega)}.
    \]
    Consequently, the temporal regularity of $ u $ is essentially determined by $
    t^\alpha v $, and its temporal regularity can not exceed $ H^{\alpha+1/2} $.
\end{rem}

Analogously to \cref{thm:regu-pde}, we have the following theorem.
\begin{thm}
    \label{thm:regu-pde-dual}
    Assume that $\beta\geqslant0,\gamma\geqslant-3\alpha/4$ or $\beta\geqslant-1,\gamma\geqslant-\alpha/4$.
    If $ q \in {}^0\!H^\gamma(0,T;\dot H^\beta(\Omega)) $, then there exists a unique
    \[
    w\in {}^0\!H^{\alpha+\gamma}(0,T;\dot H^\beta(\Omega))\cap
    {}^0\!H^\gamma(0,T;\dot H^{2+\beta}(\Omega))
    \]
    such that
    \begin{equation*}
    \D_{T-}^{\alpha+\gamma} w - \Delta \D_{T-}^{\gamma} w
    =  \D_{T-}^{\gamma}q\quad\text{in~~}L^2(0,T;\dot H^{\beta}(\Omega)),
    \end{equation*}
    and
    \[
    \nm{w}_{{}^0\!H^{\alpha+\gamma}(0,T;\dot H^\beta(\Omega))} +
    \nm{w}_{{}^0\!H^\gamma(0,T;\dot H^{2+\beta}(\Omega))}
    \leqslant C_{\alpha,\gamma}
    \nm{q}_{{}^0\!H^\gamma(0,T;\dot H^\beta(\Omega))}.
    \]
\end{thm}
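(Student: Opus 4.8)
The plan is to mirror the proof of \cref{thm:regu-pde} line for line, replacing every left-sided operator $\D_{0+}$ by its right-sided counterpart $\D_{T-}$ and every space ${}_0H^\bullet$ by ${}^0\!H^\bullet$. This is legitimate because all the auxiliary results used for the left-sided problem---\cref{lem:regu-frac-int-leq,lem:regu-frac-deriv,lem:regu-frac-int-general,lem:DD,lem:coer,lem:dual-ID}---are stated symmetrically, i.e.\ each carries a right-sided twin, so the entire machinery transfers verbatim. In particular, since $1<\alpha<2$ gives $0<\alpha/4<1/2$, we have ${}^0\!H^{\alpha/4}(0,T)={}_0H^{\alpha/4}(0,T)$ with equivalent norms, which is what makes the duality pairings in the weak formulation interchangeable.

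First I would expand the datum in the orthonormal basis of $-\Delta$, writing $q=\sum_{n=0}^\infty q_n\phi_n$ with $q_n\in{}^0\!H^\gamma(0,T)$ and $\nm{q}_{{}^0\!H^\gamma(0,T;\dot H^\beta(\Omega))}^2=\sum_{n}\lambda_n^\beta\nm{q_n}_{{}^0\!H^\gamma(0,T)}^2$. For each $n$ I would introduce the right-sided scalar analogue of \cref{eq:frac_ode_weak}: find $w_n\in{}^0\!H^{\alpha/4}(0,T)$ such that
\[
\big\langle\D_{T-}^{\alpha/2} w_n, z\big\rangle_{{}_0H^{\alpha/4}(0,T)} + \lambda_n \big\langle\D_{T-}^{-\alpha/2} w_n, z\big\rangle_{(0,T)} = \big\langle\D_{T-}^{-\alpha/2} q_n, z\big\rangle_{{}_0H^{\alpha/4}(0,T)}
\]
for all $z\in{}^0\!H^{\alpha/4}(0,T)$. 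Existence, uniqueness and the basic energy bound follow from the right-sided forms of \cref{lem:regu-frac-int-leq,lem:regu-frac-deriv,lem:coer} together with the Lax--Milgram theorem, exactly as for \cref{eq:frac_ode_weak}.

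Next I would establish the right-sided analogue of \cref{lem:regu-ode}, namely that $w_n$ satisfies the strong equation $\D_{T-}^{\alpha+\gamma} w_n + \lambda_n \D_{T-}^\gamma w_n = \D_{T-}^\gamma q_n$ together with
\[
\nm{w_n}_{{}^0\!H^{\alpha+\gamma}(0,T)} + \lambda_n \nm{w_n}_{{}^0\!H^\gamma(0,T)} \leqslant C_{\alpha,\gamma}\, \nm{q_n}_{{}^0\!H^\gamma(0,T)}.
\]
The argument reproduces the two-step structure of \cref{lem:regu-ode}: one first treats the $L^2$-level auxiliary problem (the right-sided form of \cref{eq:weak-w}), tests against the solution, and uses the coercivity of \cref{lem:coer} and the duality bound of \cref{lem:dual-ID} to obtain an $H^{\alpha/2}$-estimate; one then applies $\D_{T-}^{-\alpha/2-\gamma}$ and invokes \cref{lem:DD,lem:regu-frac-int-general} to recover $w_n$ itself and its regularity, converting the derivative bounds into the stated ${}^0\!H$-norm estimates.

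Finally I would set $w:=\sum_{n=0}^\infty w_n\phi_n$ and sum the squared scalar estimates against the weights $\lambda_n^\beta$ and $\lambda_n^{2+\beta}$. Since $\lambda_n^{2+\beta}\nm{w_n}_{{}^0\!H^\gamma}^2=\lambda_n^\beta\big(\lambda_n\nm{w_n}_{{}^0\!H^\gamma}\big)^2$, both contributions are controlled by $\sum_n\lambda_n^\beta\nm{q_n}_{{}^0\!H^\gamma}^2=\nm{q}_{{}^0\!H^\gamma(0,T;\dot H^\beta(\Omega))}^2$, which gives the claimed norm bound and in particular $w\in{}^0\!H^{\alpha+\gamma}(0,T;\dot H^\beta(\Omega))\cap{}^0\!H^\gamma(0,T;\dot H^{2+\beta}(\Omega))$. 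Summing the scalar strong equations termwise (and passing to the limit using the convergence just established in $L^2(0,T;\dot H^\beta(\Omega))$) yields $\D_{T-}^{\alpha+\gamma} w - \Delta\D_{T-}^\gamma w = \D_{T-}^\gamma q$, while uniqueness is immediate from the energy estimate. I do not anticipate a genuine obstacle; the only point demanding care is the bookkeeping of the duality pairings, so that the swap $\D_{0+}\leftrightarrow\D_{T-}$ and ${}_0H\leftrightarrow{}^0\!H$ is carried out consistently---especially in the adjoint relation of \cref{lem:basic-2} and in \cref{lem:3.13,lem:DD}, where the two sidednesses are coupled through the integration-by-parts pairing $\dual{\cdot,\cdot}_{(0,T)}$.
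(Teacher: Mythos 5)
Your proposal is correct and is exactly what the paper does: the paper's entire proof of \cref{thm:regu-pde-dual} is the remark that it follows ``analogously to \cref{thm:regu-pde},'' i.e.\ by the same eigenfunction expansion and scalar ODE analysis with every left-sided operator and space replaced by its right-sided twin. Your write-up simply makes explicit the mirroring that the paper leaves implicit, and correctly notes that the only estimates not needed in the right-sided statement are the $C([0,T])$ bounds (so \cref{lem:Nirenberg,lem:trace} need not be transferred).
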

\subsection{Transposition method}
In this subsection, we use the transposition method \cite{Lions-I} to investigate the regularity of
problem \cref{eq:model} with more general data.
Let
\[
G := {{}^0\!H}^\alpha(0,T;L^{2}(\Omega))
\cap L^{2}(0,T;\dot H^{2}(\Omega)),
\]
and endow this space with the norm
\[
\nm{\cdot}_G := \max\left\{
\nm{\cdot}_{{}^0\!H^\alpha(0,T;L^2(\Omega))},\
\nm{\cdot}_{L^2(0,T;\dot H^2(\Omega))}
\right\}.
\]
In addition, define
\[
G_0  := \left\{ \left( \D_{T-}^{\alpha-1} v\right)(0): \ v \in G \right\},
\quad
G_1 := \left\{ \left( \D_{T-}^{\alpha-2} v \right)(0):\ v \in G \right\},
\]
and we equip them respectively with the norms
\[
\nm{v_0}_{G_0} :=
\inf_{
    \substack{
        v \in G \\
        ( \D_{T-}^{\alpha-1} v)(0) = v_0
    }
} \nm{v}_G, \quad
\nm{v_1}_{G_1} :=
\inf_{
    \substack{
        v \in G \\
        ( \D_{T-}^{\alpha-2} v )(0) = v_1
    }
} \nm{v}_G.
\]

Assuming that $ f \in G^* $, $ u_0 \in G_0^* $ and $ u_1 \in G_1^* $, we call $ u \in
L^2(0,T;L^2(\Omega)) $ a weak solution to problem \cref{eq:model} if
\begin{equation}
\label{eq:weak_sol_general}
\begin{aligned}
& \dual{u, \D_{T-}^\alpha v - \Delta v}_{\Omega \times (0,T)} \\
={} &
\dual{f,v}_G + \dual{u_0,\left( \D_{T-}^{\alpha-1}v \right)(0)}_{G_0} +
\dual{u_1, \left( \D_{T-}^{\alpha-2} v \right)(0)}_{G_1}
\end{aligned}
\end{equation}
for all $ v \in G $.

\begin{thm}
    Problem \cref{eq:weak_sol_general} admits a unique solution $ u $, and
    \begin{equation}
    \label{eq:weak-general-regu}
    \nm{u}_{L^2(0,T;L^2(\Omega))}\leqslant C_{\alpha}
    \Big( \nm{f}_{G^*}+ \nm{u_0}_{G_0^*}+ \nm{u_1}_{G_1^*} \Big).
    \end{equation}
\end{thm}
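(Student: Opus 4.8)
The plan is to run the standard transposition (or ``duality'') argument, whose only nontrivial ingredient is the isomorphism furnished by \cref{thm:regu-pde-dual}. First I would take $\beta=\gamma=0$ in \cref{thm:regu-pde-dual}: since $G={}^0\!H^\alpha(0,T;L^2(\Omega))\cap L^2(0,T;\dot H^2(\Omega))$, that theorem asserts precisely that the operator $L:G\to L^2(0,T;L^2(\Omega))$ defined by $Lv:=\D_{T-}^\alpha v-\Delta v$ is a bijection satisfying the a priori bound $\nm{v}_G\leqslant C_\alpha\nm{Lv}_{L^2(0,T;L^2(\Omega))}$. Together with the trivial continuity estimate $\nm{Lv}_{L^2(0,T;L^2(\Omega))}\leqslant C_\alpha\nm{v}_G$, this shows that $L$ is an isomorphism of $G$ onto $L^2(0,T;L^2(\Omega))$ with a uniformly bounded inverse.

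Second, I would verify that the right-hand side of \cref{eq:weak_sol_general} defines a bounded linear functional $F$ on $G$. The pairing $\dual{f,v}_G$ is controlled by $\nm{f}_{G^*}\nm{v}_G$ by definition. For the remaining two terms, the definitions of the norms on $G_0$ and $G_1$ give immediately $\nm{(\D_{T-}^{\alpha-1}v)(0)}_{G_0}\leqslant\nm{v}_G$ and $\nm{(\D_{T-}^{\alpha-2}v)(0)}_{G_1}\leqslant\nm{v}_G$, since each defining infimum includes the competitor $v$ itself. Hence
\[
\snm{F(v)}\leqslant\big(\nm{f}_{G^*}+\nm{u_0}_{G_0^*}+\nm{u_1}_{G_1^*}\big)\nm{v}_G
\quad\forall\,v\in G.
\]

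Third, I would transplant $F$ to $L^2(0,T;L^2(\Omega))$ through $L^{-1}$. Define $\widetilde F(q):=F(L^{-1}q)$ for $q\in L^2(0,T;L^2(\Omega))$; by the previous two steps $\widetilde F$ is a bounded linear functional with $\nm{\widetilde F}\leqslant C_\alpha(\nm{f}_{G^*}+\nm{u_0}_{G_0^*}+\nm{u_1}_{G_1^*})$. The Riesz representation theorem then produces a unique $u\in L^2(0,T;L^2(\Omega))$ with $\dual{u,q}_{\Omega\times(0,T)}=\widetilde F(q)$ for all such $q$, and with $\nm{u}_{L^2(0,T;L^2(\Omega))}$ bounded by the same constant, which is exactly \cref{eq:weak-general-regu}. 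Substituting $q=Lv$ recovers $\dual{u,\D_{T-}^\alpha v-\Delta v}_{\Omega\times(0,T)}=F(v)$ for every $v\in G$, i.e.\ $u$ solves \cref{eq:weak_sol_general}. Uniqueness is immediate: if $f=u_0=u_1=0$ then $\dual{u,Lv}_{\Omega\times(0,T)}=0$ for all $v\in G$, and surjectivity of $L$ forces $u=0$.

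I expect no single step to be a serious obstacle, since the heavy lifting --- the well-posedness and regularity of the backward-in-time dual problem --- is already packaged in \cref{thm:regu-pde-dual}. The only point demanding a little care is the well-definedness of the traces $(\D_{T-}^{\alpha-1}v)(0)$ and $(\D_{T-}^{\alpha-2}v)(0)$ appearing in $F$; but these are exactly the quantities used to define $G_0$ and $G_1$, so their boundedness against $\nm{v}_G$ is built into the construction and requires no extra work.
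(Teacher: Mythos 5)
Your proposal is correct and follows essentially the same route as the paper: both arguments rest entirely on \cref{thm:regu-pde-dual} (with $\beta=\gamma=0$) to show that $v\mapsto \D_{T-}^\alpha v-\Delta v$ is an isomorphism of $G$ onto $L^2(0,T;L^2(\Omega))$, together with the boundedness of the data functional on $G$. The paper packages the final step as an application of the Babu\v{s}ka--Lax--Milgram theorem rather than composing with $L^{-1}$ and invoking the Riesz representation theorem, but this is the same argument in different notation.
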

\begin{proof}
    For any $ v\in L^2(0,T;L^2(\Omega)) $, \cref{thm:regu-pde-dual} implies that there
    exists a unique $ w\in G $ such that $ \D_{T-}^{\alpha}w-\Delta w = v $ and
    \[
    \nm{w}_G \leqslant C_\alpha \nm{v}_{L^2(0,T;L^2(\Omega))}.
    \]
    Moreover, for any $ w\in G $, \cref{lem:regu-frac-deriv} implies that
    \[
    \nm{\D_{T-}^{\alpha}w}_{L^2(0,T;L^2(\Omega))} \leqslant
    C_\alpha
    \nm{w}_{{}^0\!H^\alpha(0,T;L^2(\Omega))}.
    \]
    Therefore, applying the Babu\v{s}ka-Lax-Milgram theorem \cite{Babuska1971} yields
    that there exists a unique $u\in L^2(0,T;L^2(\Omega))$ such that
    \cref{eq:weak_sol_general,eq:weak-general-regu} hold. This completes the proof of this theorem.
\end{proof}
\begin{rem}
    If $u_0\in \dot H^{-1/2}(\Omega)$ and $u_1\in \dot H^{-3/2}(\Omega)$, then an evident computation implies that $u_0\in G_0^*$ and $u_1\in G_1^*$.
    Hence, the weak solution is well-defined by \cref{eq:weak_sol_general} for $u_0\in \dot H^{-1/2}(\Omega),u_1\in \dot H^{-3/2}(\Omega)$ and $ f\in G^*$. Moreover, if $ f\in \widehat W^*$ and $ \D_{0+}^{\alpha/2}(u_0+tu_1)\in W^*$, then the weak solutions defined by \cref{eq:weak_sol,eq:weak_sol_general}
    are essentially the same.
\end{rem}
\section{A Petrov-Galerkin Method}
\label{sec:fem}
Given $ J \in \mathbb N_{>0} $, let $ 0 = t_0 < t_1 < \cdots < t_J = T $ be a
partition of $ [0,T] $. Set $\tau_j:=t_j-t_{j-1}$ and $ I_j := (t_{j-1},t_j) $ for
each $ 1 \leqslant j \leqslant J $, and define $\tau:=\max_{1\leqslant j\leqslant
J}\tau_j$. Let $ \mathcal K_h $ be a shape-regular triangulation of $\Omega $
consisting of $ d $-simplexes, and we use $ h $ to denote the maximum diameter of the
elements in $ \mathcal K_h $. Define
\begin{align*}
  S_h := {}&\left\{
    v_h \in H_0^1(\Omega):\
    v_h|_K \in P_1(K),\ \forall \,K \in \mathcal K_h
  \right\},\\
  \widehat W_\tau:={}&\{\widehat w_\tau\in L^2(0,T):\widehat w_\tau|_{I_j}\in P_0(I_j),
  \,\forall\,1\leqslant j\leqslant J\},\\
    W_\tau:={}&\{w_\tau\in H^1(0,T):\,w_\tau|_{I_j}\in P_1(I_j),
  \,\forall\,1\leqslant j\leqslant J\}.
\end{align*}
Above and throughout, $P_k(\mathcal O)(k=0,1)$ denotes the set of polynomials defined on $ \mathcal O $ with degree $ \leqslant k $, where $\mathcal O$ is either an interval or an element of $\mathcal K_h$. Moreover, define
\begin{align*}
  \widehat W_\tau\otimes S_h:={}&{\rm span}\{\widehat w_\tau v_h:\widehat w_\tau\in   \widehat W_\tau,~
  v_h\in S_h\},\\
  W_\tau \otimes S_h:={}&{\rm span}\{w_\tau v_h:w_\tau\in W_\tau,~
  v_h\in S_h\}.
\end{align*}

Assuming that $ u_0, u_1 \in S_h^* $ and $ f \in (  \widehat W_\tau\otimes S_h)^* $, we define
an approximation $ U \in W_\tau\otimes S_h $ to problem \cref{eq:model} by that $
U(0) = u_{0,h} $ and
\begin{equation}
\label{eq:algor_sol}
\begin{aligned}
& \dual{\D_{0+}^{\alpha-1}U',V}_{\Omega \times (0,T)} +
\dual{\nabla U,\nabla V}_{\Omega \times (0,T)} \\
={} &
\dual{f,V}_{  \widehat W_\tau\otimes S_h} +
\big\langle u_1,\dual{\D_{0+}^{\alpha}t,V}_{(0,T)}
\big\rangle _{S_h}
\end{aligned}
\end{equation}
for all $ V\in   \widehat W_\tau\otimes S_h$, where $ u_{0,h}$ is the Ritz
projection of $ u_0 $ onto $ S_h $ if $ u_0 \in H_0^{1}(\Omega) $ and $ u_{0,h}$ is
the $ L^2(\Omega) $-orthogonal projection of $ u_0 $ onto $ S_h $ if $ u_0 \in S_h^*
\setminus H_0^{1}(\Omega) $. Here the Ritz projection $R_h:H_0^1(\Omega)\to S_h$ is
defined by that
\[
\dual{\nabla (v-R_hv),\nabla v_h}_{\Omega} = 0
\qquad\forall\, v_h \in S_{h},
\]
for all $v\in H_0^1(\Omega)$.

Similar to \cite[Theorem 4.1]{Li2018}, we have the following stability result.
\begin{thm}
    \label{thm:stab}
    Problem \cref{eq:algor_sol} admits a unique solution $ U $.
    Moreover, if
    $ u_0 \in \dot H^1(\Omega) $, $ u_1 \in L^2(\Omega) $ and $ f\in {}_0H^{(1-\alpha)/2}(0,T;L^2(\Omega)) $,
    then
    \begin{equation*}
    \label{eq:stab}
    \begin{aligned}
    &{} \nm{U'}_{{}_0H^{{(\alpha-1)/2}}(0,T;L^2(\Omega))} +
    \nm{U}_{C([0,T];\dot H^1(\Omega))}
    \\
    \leqslant {}&C_{\alpha,T}\big(
    \nm{u_0}_{\dot H^1(\Omega)} +
    \nm{u_1}_{L^2(\Omega)} +
        \nm{f}_{{}_0H^{(1-\alpha)/2}(0,T;L^2(\Omega))}
    \big).
    \end{aligned}
    \end{equation*}
\end{thm}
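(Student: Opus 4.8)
The plan is to exploit the fact that the temporal derivative $U'$ of any $U\in W_\tau\otimes S_h$ is piecewise constant in time and $S_h$-valued, so that $U'\in\widehat W_\tau\otimes S_h$ is itself an admissible test function. I would first settle well-posedness. Since $U(0)=u_{0,h}$ is prescribed, the trial set and the test space $\widehat W_\tau\otimes S_h$ have the same finite dimension, so it suffices to prove uniqueness. For the homogeneous problem ($f=0$, $u_0=u_1=0$, hence $u_{0,h}=0$) I would insert $V=U'$ into \cref{eq:algor_sol}: by the second part of \cref{lem:coer} with $\gamma=(\alpha-1)/2\in(0,1/2)$ the $L^2$-pairing $\dual{\D_{0+}^{\alpha-1}U',U'}_{\Omega\times(0,T)}$ coincides with the $H^{(\alpha-1)/2}$ bilinear form and is bounded below by $\cos\!\big((\alpha-1)\pi/2\big)\nm{\D_{0+}^{(\alpha-1)/2}U'}_{L^2(0,T;L^2(\Omega))}^2\geqslant0$, while $\dual{\nabla U,\nabla U'}_{\Omega\times(0,T)}=\tfrac12\nm{U(T)}_{\dot H^1(\Omega)}^2-\tfrac12\nm{u_{0,h}}_{\dot H^1(\Omega)}^2$. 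Both terms being nonnegative and summing to zero forces $U'=0$ and thus $U\equiv U(0)=0$.

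For the stability estimate I would again test with $V=U'$ over the whole interval. The left-hand side is bounded below, using the coercivity above together with \cref{lem:regu-frac-deriv} and the equivalence of ${}_0H^{(\alpha-1)/2}$, ${}^0\!H^{(\alpha-1)/2}$ and $H^{(\alpha-1)/2}$ (valid since $(\alpha-1)/2<1/2$), by $C_\alpha\nm{U'}_{{}_0H^{(\alpha-1)/2}(0,T;L^2(\Omega))}^2+\tfrac12\nm{U(T)}_{\dot H^1(\Omega)}^2$. The source term is estimated by duality, $\dual{f,U'}\leqslant\nm{f}_{{}_0H^{(1-\alpha)/2}(0,T;L^2(\Omega))}\nm{U'}_{{}^0\!H^{(\alpha-1)/2}(0,T;L^2(\Omega))}$, since ${}_0H^{(1-\alpha)/2}$ is dual to ${}^0\!H^{(\alpha-1)/2}$. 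For the datum $u_1$ I would use $\D_{0+}^\alpha t=t^{1-\alpha}/\Gamma(2-\alpha)$, whence $\dual{\D_{0+}^\alpha t,U'}_{(0,T)}=(\D_{T-}^{-(2-\alpha)}U')(0)$; by \cref{eq:regu-right} of \cref{lem:regu-frac-int-leq} this lies in ${}^0\!H^{(3-\alpha)/2}(0,T;L^2(\Omega))$, and since $(3-\alpha)/2>1/2$ the embedding into $C([0,T];L^2(\Omega))$ bounds its value at $0$ by $C\nm{U'}_{{}^0\!H^{(\alpha-1)/2}(0,T;L^2(\Omega))}$, giving $\dual{u_1,\dual{\D_{0+}^\alpha t,U'}_{(0,T)}}_{S_h}\leqslant C\nm{u_1}_{L^2(\Omega)}\nm{U'}_{{}^0\!H^{(\alpha-1)/2}}$. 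A Young inequality then absorbs the $\nm{U'}$ factors and yields the bound on $\nm{U'}_{{}_0H^{(\alpha-1)/2}(0,T;L^2(\Omega))}$, where $\nm{u_{0,h}}_{\dot H^1(\Omega)}\leqslant\nm{u_0}_{\dot H^1(\Omega)}$ by the $\dot H^1$-stability of the Ritz projection.

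It remains to upgrade the endpoint control of $\nm{U(T)}_{\dot H^1(\Omega)}$ to the full norm $\nm{U}_{C([0,T];\dot H^1(\Omega))}$. Because $U$ is affine in $t$ on each $I_j$, the convex map $t\mapsto\nm{U(t)}_{\dot H^1(\Omega)}^2$ attains its maximum on $[t_{j-1},t_j]$ at an endpoint, so $\nm{U}_{C([0,T];\dot H^1(\Omega))}=\max_{0\leqslant j\leqslant J}\nm{U(t_j)}_{\dot H^1(\Omega)}$. To bound $\nm{U(t_j)}_{\dot H^1(\Omega)}$ I would test \cref{eq:algor_sol} with the truncated function $V_j:=\chi_{(0,t_j)}U'\in\widehat W_\tau\otimes S_h$. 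Since $(\D_{0+}^{\alpha-1}U')(t)$ depends only on $U'|_{(0,t)}$, the fractional term equals $\dual{\D_{0+}^{\alpha-1}(U'|_{(0,t_j)}),U'}_{\Omega\times(0,t_j)}$, which is nonnegative by \cref{lem:coer} applied on $(0,t_j)$; the Laplacian term equals $\tfrac12\nm{U(t_j)}_{\dot H^1(\Omega)}^2-\tfrac12\nm{u_{0,h}}_{\dot H^1(\Omega)}^2$. The two data terms are handled as before after truncation: the key point is that, since $(\alpha-1)/2<1/2$, multiplication by $\chi_{(0,t_j)}$ is bounded on $H^{(\alpha-1)/2}(0,T)$ with a constant independent of $t_j$, so $\nm{V_j}_{{}^0\!H^{(\alpha-1)/2}}\leqslant C\nm{U'}_{{}^0\!H^{(\alpha-1)/2}}$. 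Dropping the nonnegative fractional term gives $\tfrac12\nm{U(t_j)}_{\dot H^1(\Omega)}^2\leqslant\tfrac12\nm{u_{0,h}}_{\dot H^1(\Omega)}^2+C\big(\nm{f}_{{}_0H^{(1-\alpha)/2}}+\nm{u_1}_{L^2(\Omega)}\big)\nm{U'}_{{}_0H^{(\alpha-1)/2}}$, and combining with the bound on $\nm{U'}$ from the previous step and maximizing over $j$ completes the estimate.

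The main obstacle is precisely this last upgrade to the $C([0,T];\dot H^1(\Omega))$ norm: the nonlocality of $\D_{0+}^{\alpha-1}$ means one cannot simply restrict the energy identity to $(0,t_j)$, and the truncated test function $V_j$ is admissible only because it stays piecewise constant on the same mesh. Two facts make the argument close: the coercivity constant $\cos\!\big((\alpha-1)\pi/2\big)$ is strictly positive for $1<\alpha<2$, and, in the subcritical range $(\alpha-1)/2<1/2$, the characteristic function $\chi_{(0,t_j)}$ is a uniform multiplier on the relevant fractional-order space. I would want to verify both the positivity of this constant and the mesh-independence of the multiplier norm with care, as these are what guarantee that the final constant $C_{\alpha,T}$ is independent of $\tau$ and $h$.
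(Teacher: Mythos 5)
Your proof is correct and is essentially the approach the paper intends: the paper gives no proof of \cref{thm:stab} itself (it defers to \cite[Theorem 4.1]{Li2018}), but your device of testing with $U'$, and with $U'$ truncated to $(0,t_j)$, using the coercivity of \cref{lem:coer} for the causal fractional term and exact integration of the gradient term, is precisely the energy argument the paper deploys in proving \cref{lem:conv-theta-f-Hs}. The only delicate step you flag---the uniform multiplier bound for $\chi_{(0,t_j)}$ on $H^{(\alpha-1)/2}(0,T)$, valid since $(\alpha-1)/2<1/2$---can alternatively be avoided by estimating every pairing directly on $(0,t_j)$ through the equivalence $\nm{w}_{{}_0H^{(\alpha-1)/2}(0,t_j)}\sim\nm{\D_{0+}^{(\alpha-1)/2}w}_{L^2(0,t_j)}$ and the causality of the left-sided operators, which is how the paper's own error analysis handles the truncation.
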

\begin{rem}
    By \cref{eq:weak_sol}, it is natural to develop another numerical algorithm for
    problem \cref{eq:model} as follows: given $u_0,u_1\in S_h^*$ and $  f \in \widehat W^*$, seek $ U \in \widehat{W}_\tau \otimes S_h $ such that
    \begin{equation}\label{eq:alter}
    \begin{aligned}
    & \big\langle \D_{0+}^{\alpha/2} U, V \big\rangle _{\Omega \times (0,T)} +
    \big\langle \nabla \D_{0+}^{-\alpha/2}U, \nabla V
    \big\rangle _{ \Omega\times(0,T)} \\
    ={}&
    \big\langle f, \D_{T-}^{-\alpha/2}V\big\rangle_{\widehat W}+
    \big\langle u_0,\langle\D_{0+}^{\alpha/2}1,V\rangle_{(0,T)}
    \big\rangle _{S_h}+
    \big\langle u_1,\langle\D_{0+}^{\alpha/2}t,V\rangle_{(0,T)}
    \big\rangle _{S_h}
    \end{aligned}
    \end{equation}
    for all $ V \in \widehat{W}_\tau \otimes S_h $.
    Analogous to \cref{thm:basic_weak_sol}, if $    \D_{0+}^{\alpha/2}(u_0+tu_1)\in W^*$, then
    \begin{align*}
{}&  \nm{U}_{{}_0H^{\alpha/4}(0,T;L^2(\Omega)) } +
\nm{U}_{{}_0H^{-\alpha/4}(0,T;\dot H^1(\Omega))}\\
    \leqslant {}&C_\alpha
    \big(
    \nm{f}_{\widehat W^*}+
        \big\|\D_{0+}^{\alpha/2}(u_0+tu_1)\big\|_{W^*}
    \big).
    \end{align*}
    Hence, by \cref{thm:stab} this algorithm is more robust than algorithm
    \cref{eq:algor_sol}; however, the computational cost of this algorithm is larger than that of the latter. To the author's knowledge, this algorithm has never been proposed before. We will pay more attention to this algorithm in future works.
\end{rem}
\subsection{Convergence analysis}
This subsection considers the convergence analysis of the Petrov-Galerkin method with $u_0=u_1=0$. Let
\[
 \sigma:=
\max_{1 \leqslant i,j \leqslant J}
\{\tau_i / \tau_{j}\}\quad{\rm and}\quad
 \rho:=
\max_{1 \leqslant j \leqslant J}
\max_{
    \substack{1 \leqslant i \leqslant J\\ \snm{i-j} \leqslant1 }
} \{\tau_i / \tau_{j}\}.
\]
 In what follows, $ a\lesssim b $ means that there exists a positive constant $C$,
depending only on $ \alpha,\rho,T,\Omega $ and the shape-regular parameter of
$\mathcal K_h$, such that $ a\leqslant Cb $. In addition, $ a\sim b $ means that $
a\lesssim b\lesssim a $.
The main results are the
following two theorems.
\begin{thm}
    \label{thm:conv-f-Hs}
    If $ f \in {}_0H^{2-\alpha}(0,T;L^2(\Omega)) $, then
    \begin{align}\small
    \nm{u\!-\!U}_{ C([0,T];\dot H^1(\Omega))}\!\!
    \lesssim &
    \big(\tau^{(3-\alpha)/2}\! \!+\!\varepsilon_1(\alpha,\tau,h)\big)\!
    \nm{f}_{{}_0\!H^{2-\alpha}(0,T;L^2(\Omega))},
    \label{eq:conv-f-Hs-linf-H1}\\
    \nm{u'\!-\!U'}_{{}_0\!H^{(\alpha-1)/2}(0,T;L^2(\Omega))}\!\!
    \lesssim &\big( \tau^{(3-\alpha)/2} \!\!+\! \varepsilon_2(\alpha,\tau,h)\big)
    \nm{f}_{{}_0\!H^{2-\alpha}(0,T;L^2(\Omega))},
    \label{eq:conv-f-Hs-Hs-L2}
    \end{align}
    where
    \begin{equation*}
    \varepsilon_1(\alpha,\tau,h):=
    \left\{
    \begin{aligned}
            &h&&{\text if~} 1<\alpha< 3/2,\\
                &  \big(1+\snm{\log h} \big)h&&{\text if~} \alpha= 3/2,\\
    &h^{3/\alpha-1}+C_\sigma\tau^{3/2-\alpha}h &&{\text if~} 3/2<\alpha<2,
    \end{aligned}
    \right.
    \end{equation*}
    and
        \begin{equation}\label{eq:eps2}
    \varepsilon_2(\alpha,\tau,h):=
    \left\{
    \begin{aligned}
    &h^{3/\alpha-1}&&{\text if~} 1<\alpha\leqslant 3/2,\\
    &C_\sigma\tau^{3/2-\alpha}h&&{\text if~} 3/2<\alpha<2.
    \end{aligned}
    \right.
    \end{equation}
\end{thm}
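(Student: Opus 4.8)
The plan is to combine Galerkin orthogonality with the discrete stability estimate of \cref{thm:stab} and the sharp regularity furnished by \cref{thm:regu-pde}. Since $u_0=u_1=0$, applying \cref{eq:regu_pde_strong} with $\gamma=0$ gives $\D_{0+}^\alpha u-\Delta u=f$ in $L^2(0,T;L^2(\Omega))$, and because $\gamma=2-\alpha$ yields $u\in{}_0H^2(0,T;L^2(\Omega))\cap{}_0H^{2-\alpha}(0,T;\dot H^2(\Omega))$ we may write $\D_{0+}^\alpha u=\D_{0+}^{\alpha-1}u'$; testing against any $V\in\widehat W_\tau\otimes S_h$ and integrating by parts in space shows that $u$ satisfies the very identity \cref{eq:algor_sol} solved by $U$ (with $u_0=u_1=0$). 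Subtracting produces the orthogonality relation $\dual{\D_{0+}^{\alpha-1}(u-U)',V}_{\Omega\times(0,T)}+\dual{\nabla(u-U),\nabla V}_{\Omega\times(0,T)}=0$ for all $V\in\widehat W_\tau\otimes S_h$. I would then introduce an interpolant $W\in W_\tau\otimes S_h$ built from the spatial Ritz projection $R_h$ and a temporal piecewise-linear quasi-interpolation onto $W_\tau$, and split $u-U=(u-W)+(W-U)$.

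For the discrete component $\theta:=W-U$, the orthogonality relation shows that $\theta$ solves the discrete problem \cref{eq:algor_sol} whose right-hand side is the consistency residual $V\mapsto\dual{\D_{0+}^{\alpha-1}(W-u)',V}+\dual{\nabla(W-u),\nabla V}$. Choosing the Ritz projection in space annihilates the stiffness contribution against $V(t)\in S_h$, so the residual collapses to a purely temporal projection error of $R_hu$. I would then invoke \cref{thm:stab} to bound $\nm{\theta'}_{{}_0H^{(\alpha-1)/2}(0,T;L^2(\Omega))}+\nm{\theta}_{C([0,T];\dot H^1(\Omega))}$ by the dual norm of this residual in ${}_0H^{(1-\alpha)/2}(0,T;L^2(\Omega))$, with \cref{lem:regu-frac-deriv} serving to convert the fractional derivative $\D_{0+}^{\alpha-1}$ into the negative-order temporal norm demanded by \cref{thm:stab}.

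It then remains to estimate the interpolation error $u-W$ in these fractional norms, where \cref{thm:regu-pde} supplies the regularity. The temporal rate $\tau^{(3-\alpha)/2}$ arises by approximating $u'\in{}_0H^1(0,T;L^2(\Omega))$ by the piecewise constants $\widehat W_\tau=W_\tau'$ and measuring in ${}_0H^{(\alpha-1)/2}$, giving order $\tau^{1-(\alpha-1)/2}$; the requisite fractional-order estimates are reached by interpolation through \cref{lem:inter_space,lem:interp}. The spatial factor is handled casewise: for $1<\alpha<3/2$ the embedding ${}_0H^{2-\alpha}(0,T;\dot H^2(\Omega))\hookrightarrow C([0,T];\dot H^2(\Omega))$ holds since $2-\alpha>1/2$, so the Ritz error gives $h$; at $\alpha=3/2$ this embedding is borderline and \cref{lem:trace} recovers it with the logarithmic factor $(1+\snm{\log h})$; for $3/2<\alpha<2$ the $\dot H^2$ embedding fails, so I would use instead the pointwise regularity $u\in C([0,T];\dot H^{3/\alpha}(\Omega))$ (take $\gamma=2-\alpha<1/2$, $\beta=0$) for the rate $h^{3/\alpha-1}$, and recover the remaining pointwise-in-time spatial error through an inverse estimate in time on $W_\tau\otimes S_h$, which produces the mixed term $C_\sigma\tau^{3/2-\alpha}h$.

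The main obstacle will be precisely these estimates in fractional-order temporal norms for the nonlocal operator $\D_{0+}^{\alpha-1}$: they do not follow from integer-order interpolation bounds and must be built scale-by-scale in the spirit of \cref{lem:regu-frac-int-leq,lem:regu-frac-deriv}, while the piecewise-constant (nonconforming) test space forces one to represent the residual accurately in the negative-order space ${}_0H^{(1-\alpha)/2}(0,T;L^2(\Omega))$. The second delicate point, responsible for the factor $C_\sigma$ and the term $\tau^{3/2-\alpha}h$ when $3/2<\alpha<2$, is the failure of the uniform-in-time embedding once $2-\alpha<1/2$, so that the $C([0,T];\dot H^1(\Omega))$-error cannot be read off directly from the temporal Bochner regularity and must instead be traded against the mesh ratio via an inverse inequality; controlling this constant sharply, together with the logarithmic endpoint at $\alpha=3/2$, is the technical heart of the argument.
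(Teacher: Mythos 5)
Your overall architecture coincides with the paper's: Galerkin orthogonality derived from \cref{eq:regu_pde_strong}, the splitting of $u-U$ through the Ritz projection composed with a temporal quasi-interpolant, the regularity input from \cref{thm:regu-pde}, and the three spatial regimes (Sobolev embedding in time for $1<\alpha<3/2$, the $\epsilon$-optimized trace estimate producing the logarithm at $\alpha=3/2$, and the $C([0,T];\dot H^{3/\alpha}(\Omega))$ regularity combined with a temporal inverse inequality for $3/2<\alpha<2$) are exactly the mechanisms the paper uses. The one structural difference is that you route the discrete error through the global stability result \cref{thm:stab}, whereas the paper runs a localized energy argument, testing with $\xi_j'=(U-Q_{\tau,j}R_hu)'$ on $(0,t_j)$ for every $j$ to extract the nodal values $\nm{\xi_j(t_j)}_{\dot H^1(\Omega)}$ and then exploiting that piecewise linears attain their $C([0,T];\dot H^1(\Omega))$-norm at the nodes.

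There is, however, a genuine gap at precisely the point you call the ``technical heart,'' and it is not merely technical: the rate $\tau^{(3-\alpha)/2}$ cannot be reached with a generic piecewise-linear quasi-interpolant in time. After the Ritz projection removes the spatial part of the stiffness residual, what remains is $\dual{(Q-I)\Delta u,\xi'}_{\Omega\times(0,t_j)}$ with $\xi'$ controlled only in ${}_0H^{(\alpha-1)/2}(0,T;L^2(\Omega))$ (equivalently, your route through \cref{thm:stab} needs $\nm{(I-Q)\Delta u}_{{}_0H^{(1-\alpha)/2}(0,T;L^2(\Omega))}$). Gaining the extra factor $\tau^{(\alpha-1)/2}$ beyond the $L^2$-interpolation error $\tau^{2-\alpha}$ requires the interpolation error to be $L^2$-orthogonal to a local piecewise-polynomial space; the plain Cl\'ement operator $Q_\tau$ carries no such moment conditions, and without them one is stuck at $\tau^{2-\alpha}$ at best, which is strictly worse than $\tau^{(3-\alpha)/2}$ for every $\alpha\in(1,2)$. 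The paper's resolution is the family of modified Cl\'ement operators $Q_{\tau,j}$ with parity-dependent local mean-value constraints, together with the companion projections $P_{\tau,j}$; these deliver exactly the negative-order pairing estimate (\cref{lem:PR}), its $L^2$-stability (\cref{lem:R_tau}), and the fractional-order estimate for the derivative of the error (\cref{lem:R_tau-est-Hs}) that your plan presupposes but does not construct. A secondary inaccuracy: the residual does not ``collapse to a purely temporal projection error'' --- the term $\dual{\D_{0+}^{\alpha-1}(Q(u-R_hu))',V}$ retains the spatial Ritz error and is precisely the contribution ($E_3$ in the paper) that generates $\varepsilon_2(\alpha,\tau,h)$ and forces the inverse-estimate detour through ${}_0H^{1-\alpha/2}$ when $3/2<\alpha<2$; your casewise discussion shows you anticipate this, but the interpolant with the required orthogonality remains the missing ingredient.
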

\begin{thm}
  \label{thm:conv-f-L2}
  If $ f \in L^{2}(0,T;L^2(\Omega)) $, then
  \begin{equation}
    \label{eq:conv-f-L2}
    \begin{aligned}
      {}& \nm{(u-U)'}_{{}_0H^{(\alpha-1)/2}(0,T;L^2(\Omega))} +
      \nm{u-U}_{ C([0,T];\dot H^1(\Omega))}  \\
      \lesssim{}& \big( \tau^{(\alpha-1)/2} +   \varepsilon_3(\alpha,\tau,h) \big)
      \nm{f}_{L^{2}(0,T;L^2(\Omega))},
    \end{aligned}
  \end{equation}
  where
        \begin{equation}\label{eq:ep3}
  \varepsilon_3(\alpha,\tau,h):=
  \left\{
  \begin{aligned}
  &h^{1-1/\alpha}&&{\text if~} 1<\alpha\leqslant 3/2,\\
  &C_\sigma\tau^{-1/2}h&&{\text if~} 3/2<\alpha<2.
  \end{aligned}
  \right.
  \end{equation}
\end{thm}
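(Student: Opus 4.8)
The plan is to derive a Galerkin orthogonality relation, split the error through a space-time projection of $u$, and control the two pieces by interpolation theory and the stability estimate of \cref{thm:stab}, using the regularity of $u$ supplied by \cref{thm:regu-pde}. Put
\[
B(w,V):=\dual{\D_{0+}^{\alpha-1}w',V}_{\Omega\times(0,T)}+\dual{\nabla w,\nabla V}_{\Omega\times(0,T)}.
\]
Since $u_0=u_1=0$, \cref{lem:DD} gives $\D_{0+}^\alpha u=\D_{0+}^{\alpha-1}u'$, so the strong form \cref{eq:regu_pde_strong} with $\beta=\gamma=0$ yields $B(u,V)=\dual{f,V}_{\Omega\times(0,T)}$ for every $V\in\widehat W_\tau\otimes S_h$; comparing with \cref{eq:algor_sol} gives the orthogonality $B(u-U,V)=0$ on the discrete test space. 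From \cref{thm:regu-pde} with $\beta=\gamma=0$ I record $u\in{}_0H^{\alpha}(0,T;L^2(\Omega))\cap L^2(0,T;\dot H^2(\Omega))$ and, from its $C([0,T];\cdot)$ part at $\gamma=0$, $u\in C([0,T];\dot H^{2-1/\alpha}(\Omega))$, all with norm $\lesssim\nm{f}_{L^2(0,T;L^2(\Omega))}$. These are precisely the inputs producing the scales $\tau^{(\alpha-1)/2}$, $h$ and $h^{1-1/\alpha}$.

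I would then set $W:=I_\tau R_hu\in W_\tau\otimes S_h$, where $R_h$ is the spatial Ritz projection applied pointwise in time and $I_\tau$ is the temporal nodal interpolant onto $W_\tau$ (note $W(0)=R_hu(0)=0=U(0)$), and write $u-U=(u-W)+(W-U)$. The approximation part $u-W$ is estimated by interpolation theory: since $u\in{}_0H^{\alpha}(0,T;L^2(\Omega))$, temporal interpolation gives the rate $\tau^{(\alpha-1)/2}$ in both the $C([0,T];\dot H^1(\Omega))$ and, after differentiation, the ${}_0H^{(\alpha-1)/2}(0,T;L^2(\Omega))$ error norms, while the spatial Ritz error supplies the $\varepsilon_3$ terms. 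For $1<\alpha\le3/2$ the pointwise-in-time bound $\nm{u-R_hu}_{\dot H^1(\Omega)}\lesssim h^{1-1/\alpha}\nm{u}_{\dot H^{2-1/\alpha}(\Omega)}$ together with $u\in C([0,T];\dot H^{2-1/\alpha}(\Omega))$ yields $h^{1-1/\alpha}$ directly.

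For the discrete part $W-U\in W_\tau\otimes S_h$ the orthogonality gives $B(W-U,V)=B(W-u,V)$ for all $V\in\widehat W_\tau\otimes S_h$, so $W-U$ solves the scheme \cref{eq:algor_sol} with the consistency functional $V\mapsto B(W-u,V)$ on the right. By Ritz orthogonality the gradient pairing reduces to the temporal interpolation error $R_hu-I_\tau R_hu$, and the remaining contribution is the fractional term $\dual{\D_{0+}^{\alpha-1}(u-W)',V}$. I would bound this functional in the negative-order-in-time norm demanded by \cref{thm:stab} (with vanishing initial data) and read off $\nm{(W-U)'}_{{}_0H^{(\alpha-1)/2}(0,T;L^2)}+\nm{W-U}_{C([0,T];\dot H^1)}$. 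In the regime $3/2<\alpha<2$, where the pointwise spatial regularity $\dot H^{2-1/\alpha}$ falls short of $\dot H^2$, I would instead exploit the stronger bound $u\in L^2(0,T;\dot H^2(\Omega))$ to recover the full order $h$, paying a temporal inverse inequality on the discrete space (of the type $\nm{V}_{C([0,T])}\lesssim C_\sigma\tau^{-1/2}\nm{V}_{L^2(0,T)}$, whence the $C_\sigma$ dependence) to pass into the required norms; this is exactly the origin of the $C_\sigma\tau^{-1/2}h$ term.

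The main obstacle is the fractional consistency term $\dual{\D_{0+}^{\alpha-1}(u-W)',V}$ with $V$ piecewise constant in time: because $\D_{0+}^{\alpha-1}$ is nonlocal it couples all subintervals, while $u-W$ carries only the temporal regularity ${}_0H^{\alpha}$. I would use \cref{lem:regu-frac-deriv,lem:regu-frac-int-general,lem:3.13} to shift fractional orders between the two factors, thereby recasting the pairing against the interpolation error in a form whose dual norm is controlled by $\tau^{(\alpha-1)/2}+\varepsilon_3$, and, for $3/2<\alpha<2$, combine this with the temporal inverse inequality above. Collecting the approximation and discrete bounds and applying the triangle inequality in each of the two norms then gives \cref{eq:conv-f-L2}.
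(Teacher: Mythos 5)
Your overall architecture (Galerkin orthogonality, splitting through a space--time projection, energy/stability for the discrete part) matches the paper's treatment of the case $3/2<\alpha<2$, but there are two concrete gaps. First, you take $W=I_\tau R_hu$ with $I_\tau$ the temporal \emph{nodal} interpolant. The test functions of \cref{eq:algor_sol} are piecewise constant in time, and the discrete error derivative $\xi'$ is controlled only in ${}_0H^{(\alpha-1)/2}(0,T;L^2(\Omega))$, not in $L^2$; so the gradient consistency term must be written as $\dual{(I-\Pi)\Delta u,\xi'}$ and one must gain a factor $\tau^{(\alpha-1)/2}$ from the \emph{smoothness of the second argument}. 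This is exactly \cref{lem:PR}, and it relies on $(I-Q_{\tau,j})u$ being $L^2$-orthogonal to piecewise constants on macro-intervals; the nodal interpolation error has no such orthogonality, so for $f\in L^2(0,T;L^2(\Omega))$ (where $u$ is only in $L^2(0,T;\dot H^2(\Omega))$, giving no positive power of $\tau$ from the first factor) your bound on this term degenerates, and an inverse inequality $\nm{\xi'}_{L^2}\lesssim\tau^{-(\alpha-1)/2}\nm{\xi'}_{{}_0H^{(\alpha-1)/2}}$ produces a divergent factor. The paper's family of modified Cl\'ement operators $Q_{\tau,j}$ (one per node, to run the energy identity on each $(0,t_j)$ and recover the nodal $\dot H^1$ values) is not a cosmetic choice but the mechanism that makes the duality gain and the $C([0,T];\dot H^1(\Omega))$ control work.

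Second, and more seriously, your plan for $1<\alpha\leqslant 3/2$ cannot produce $h^{1-1/\alpha}$ in the ${}_0H^{(\alpha-1)/2}$-seminorm of $(u-U)'$ by the direct argument. The consistency term involving $(I-R_h)u$ must be measured in ${}_0H^{(\alpha+1)/2}(0,T;L^2(\Omega))$; by \cref{lem:interp,thm:regu-pde}, $u$ lies in ${}_0H^{(\alpha+1)/2}(0,T;\dot H^{1-1/\alpha}(\Omega))$ with spatial index $1-1/\alpha<1/2$, a range in which the Ritz projection has no $O(h^{1-1/\alpha})$ approximation property (it is not even defined below $H^1$), and your pointwise-in-time bound in $C([0,T];\dot H^{2-1/\alpha}(\Omega))$ only helps the $u-W$ piece, not the discrete consistency functional. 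The paper avoids this entirely: \cref{lem:conv-theta-f-L2} gives only $\tau^{(\alpha-1)/2}+C_\sigma\tau^{-1/2}h$ for all $\alpha$, and the rate $h^{1-1/\alpha}$ for $1<\alpha\leqslant 3/2$ is obtained by interpolating the solution operator $f\mapsto u-U$ between the stability estimate of \cref{thm:stab} (data in ${}_0H^{(1-\alpha)/2}(0,T;L^2(\Omega))$) and the smooth-data estimate of \cref{thm:conv-f-Hs} (data in ${}_0H^{2-\alpha}(0,T;L^2(\Omega))$), using $L^2=[{}_0H^{(1-\alpha)/2},{}_0H^{2-\alpha}]_{(\alpha-1)/(3-\alpha),2}$ and \mbox{$(\tau^{(3-\alpha)/2}+h^{3/\alpha-1})^{(\alpha-1)/(3-\alpha)}<\tau^{(\alpha-1)/2}+h^{1-1/\alpha}$}. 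Without this operator-interpolation step your proof does not reach the stated $\varepsilon_3$ in the regime $1<\alpha\leqslant 3/2$.
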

\begin{rem}
    If $1<\alpha\leqslant3/2$, then by \cref{lem:interp,thm:regu-pde}, estimates \cref{eq:conv-f-Hs-Hs-L2,eq:conv-f-L2}
are optimal with respect to the regularity of $u$; moreover,
\cref{eq:conv-f-Hs-linf-H1} is optimal and nearly optimal with respect to the regularity of $ u $ for $ 1<\alpha <3/2 $ and $ \alpha = 3/2 $, respectively. If $3/2<\alpha<2$, then all the estimates \cref{eq:conv-f-Hs-linf-H1,eq:conv-f-Hs-Hs-L2,eq:conv-f-L2} are optimal with respect to the regularity of $u$ provided that the temporal grid is quasi-uniform and $h\leqslant C\tau^{\alpha/2}$ for some positive constant $C$. However, numerical results indicate that the requirement $h\leqslant C\tau^{\alpha/2}$ is unnecessary.
\end{rem}

To prove the above two theorems, we need several interpolation operators as follows.
Define
\[
\begin{split}
W^0_\tau:={}&\{w_\tau\in {}_0H^1(0,T):\,w_\tau|_{I_j}\in P_1(I_j),
\,\forall\,1\leqslant j\leqslant J\}.
\end{split}
\]
For $0\leqslant j\leqslant J$, set
\[
\omega_j:=
\left\{
\begin{aligned}
&(t_0,t_1),&&j = 0,\\
&(t_{J-1},t_J),&&j = J,\\
(&t_{j-1},t_{j+1}),&&1\leqslant j<J,
\end{aligned}
\right.
\]
and let $\Pi_{j}$ be the $ L^2 $-orthogonal projection operator onto $ P_1(\omega_j) $.
We introduce the Cl\'ement interpolation operator $Q_\tau:L^2(0,T)\to W^0_\tau$
by that, for all $v\in L^2(0,T)$,
\[
(Q_\tau v)(t_j) ={} (\Pi_{j}v)(t_j),\quad1\leqslant j\leqslant J.
\]
For each $ 1\leqslant j\leqslant J $, define
\[
  \widehat W_{\tau,j} :=
\left\{
\widehat w_\tau\in L^2(0,T):
\widehat w_\tau|_{I_\tau}\in P_0(I_\tau),
~\forall\, I_\tau\in \mathcal T_j
\right\},
\]
where
\begin{align*}
    \mathcal T_j :=
    \left\{
    \begin{aligned}
        &\big\{
        \omega_{2i}:\
        0 \leqslant i <j/2
        \big\} \cup \big\{
        I_i:\ j < i \leqslant J
        \big\},&&j\text{ is odd},\\
        &\big\{
        \omega_{2i-1}:
        1 \leqslant i \leqslant j/2
        \big\} \cup \big\{
        I_i:\ j < i \leqslant J
        \big\},&&j\text{ is even}.
    \end{aligned}
    \right.
\end{align*}
Let $ P_{\tau,j} $ be the $ L^2 $-orthogonal projection operator onto $   \widehat W_{\tau,j} $, and define a family of modified Cl\'ement
interpolation operators $ Q_{\tau,j}: L^2(0,T) \to W^0_\tau $ by that,
for any $ v\in L^2(0,T) $,
\[
\begin{cases}
\dual{v-Q_{\tau,j} v,1}_{\omega_i}=0 &
\text{
    if $ 1\leqslant i<j $ and
    $ \snm{i-j}$ is odd,
}\\
(Q_{\tau,j} v)(p)=(Q_{\tau} v)(p)&
\text{ if $ p $ is a node of the partition $ \mathcal T_j $. }
\end{cases}
\]
By definition, it is evident that $Q_{\tau,1}=Q_\tau$.

For the above interpolant operators and the Ritz projection operator $R_h$,
we have the following standard results
\cite{Quarteroni1994Numerical,Clement1975,Dupont1980Polynomial}, which will be used implicitly in our proofs.
If $v \in {}_0H^{\gamma}(0,T)$ with $ 1/2<\gamma \leqslant 2 $, then
\[
\begin{aligned}
\nm{\left(I-Q_{\tau}\right) v}_{C[0,T]}
\leqslant{}&C_{\rho,\gamma,T}
\tau^{\gamma-1/2}
\nm{v}_{{}_0H^{\gamma}(0,T)};
\end{aligned}
\]
if $v \in {}_0H^{\gamma}(0,T)$ with $ 0\leqslant\gamma \leqslant 2 $, then
\[
\nm{(I-Q_\tau)v\,}_{L^{2}(0,T)}
\leqslant C_{\rho,\gamma}
\tau^{\gamma} \nm{v}_{{}_0H^{\gamma}(0,T)}.
\]
If $v \in {}_0H^\gamma(0,T)$ with $ 0 \leqslant\gamma \leqslant 1 $, then
\[
\nm{(I-P_{\tau,j})v}_{L^2(0,tj)}
\leqslant{}C_{\gamma}
\tau^{\gamma} \nm{v}_{{}_0H^\gamma(0,t_j)},
\]
for all $1\leqslant j\leqslant J$. If $ v \in \dot H^r(\Omega) $ with $ 1 \leqslant r \leqslant 2 $, then
\[
\nm{(I-R_h)v}_{L^2(\Omega)} +
h \nm{(I-R_h)v}_{\dot H^1(\Omega)}
\lesssim
h^r \nm{v}_{\dot H^r(\Omega)}.
\]
Except for those well-known results, we also need to establish some nonstandard error estimates of the interpolation operator $Q_{\tau,j}$.

\begin{lem}\label{lem:PR}
    If $v\in L^2(0,T)$ and $z\in {}_0H^{\gamma}(0,T)$ with $0\leqslant\gamma\leqslant1$, then
    \begin{equation}
    \label{eq:PR}
    \dual{(I-Q_{\tau,j})v,z}_{(0,t_j)}
    \leqslant C_{\gamma}
    \tau^{\gamma}
    \nm{(I-Q_{\tau,j})v}_{L^2(0,t_j)}
    \nm{z}_{{}_0H^\gamma(0,t_j)}
    \end{equation}
    for all  $1\leqslant j \leqslant J$.
\end{lem}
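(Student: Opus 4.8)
The plan is to prove \cref{eq:PR} only at the two endpoints $\gamma=0$ and $\gamma=1$ and to recover the full range $0<\gamma<1$ by interpolation. Fix $1\leqslant j\leqslant J$, abbreviate $w:=(I-Q_{\tau,j})v$, and regard the left-hand side of the pairing as a linear functional $L_j(z):=\dual{w,z}_{(0,t_j)}$. For $\gamma=0$ the Cauchy--Schwarz inequality immediately gives $\snm{L_j(z)}\leqslant\nm{w}_{L^2(0,t_j)}\nm{z}_{L^2(0,t_j)}$, so $\nm{L_j}_{(L^2(0,t_j))^*}\leqslant\nm{w}_{L^2(0,t_j)}$. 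The heart of the matter is the endpoint $\gamma=1$, for which I would establish
\[
\snm{L_j(z)}\leqslant C\,\tau\,\nm{w}_{L^2(0,t_j)}\nm{z}_{{}_0H^1(0,t_j)}
\qquad\forall\,z\in{}_0H^1(0,t_j).
\]

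Granting both endpoint bounds, I would then appeal to \cref{lem:inter_space}, by which ${}_0H^\gamma(0,t_j)=[L^2(0,t_j),{}_0H^1(0,t_j)]_{\gamma,2}$, and to the interpolation theorem for linear maps \cite[Lemma 22.3]{Tartar2007} applied to $L_j$ with scalar target. This produces $\nm{L_j}_{({}_0H^\gamma(0,t_j))^*}\leqslant C_\gamma\nm{L_j}_{(L^2)^*}^{1-\gamma}\nm{L_j}_{({}_0H^1)^*}^{\gamma}\leqslant C_\gamma\,\tau^\gamma\nm{w}_{L^2(0,t_j)}$, which is precisely \cref{eq:PR}. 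Since the spaces ${}_0H^\gamma$ are defined through the $K$-method, the constant produced by interpolation does not depend on $t_j$ or on $\tau$, which secures the required uniformity of $C_\gamma$.

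For the $\gamma=1$ estimate the structure of $Q_{\tau,j}$ is used. The interval $(0,t_j)$ is exactly partitioned by the macro-intervals of $\mathcal T_j$ contained in $[0,t_j]$: when $j$ is even these are the double intervals $\omega_1,\omega_3,\dots,\omega_{j-1}$, and when $j$ is odd they are $\omega_0=I_1$ together with $\omega_2,\omega_4,\dots,\omega_{j-1}$. On each double interval $\omega_i$ appearing here the moment condition defining $Q_{\tau,j}$ yields $\dual{w,1}_{\omega_i}=0$, so that, with $\bar z_{\omega_i}$ the average of $z$ on $\omega_i$ and a scale-invariant Poincar\'e inequality, $\dual{w,z}_{\omega_i}=\dual{w,z-\bar z_{\omega_i}}_{\omega_i}\lesssim\snm{\omega_i}\,\nm{w}_{L^2(\omega_i)}\nm{z'}_{L^2(\omega_i)}\lesssim\tau\,\nm{w}_{L^2(\omega_i)}\nm{z'}_{L^2(\omega_i)}$, because $\snm{\omega_i}=\tau_i+\tau_{i+1}\lesssim\tau$. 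Summing over the partition and applying the discrete Cauchy--Schwarz inequality then gives the desired $\gamma=1$ bound.

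The one point that must be handled with care is the leading interval $\omega_0=(t_0,t_1)$, which occurs only for odd $j$ and on which no moment of $w$ is controlled, since both endpoints of $\omega_0$ are nodes of $\mathcal T_j$ and $Q_{\tau,j}$ is fixed there by nodal interpolation rather than by a moment. On this interval I would instead use that every $z\in{}_0H^1(0,t_j)$ satisfies $z(0)=0$; the resulting boundary-anchored Poincar\'e inequality $\nm{z}_{L^2(0,t_1)}\lesssim\tau_1\nm{z'}_{L^2(0,t_1)}\lesssim\tau\,\nm{z'}_{L^2(0,t_1)}$ gives $\dual{w,z}_{\omega_0}\lesssim\tau\,\nm{w}_{L^2(\omega_0)}\nm{z'}_{L^2(\omega_0)}$, which slots into the same sum. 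I expect this switch from a zero-mean Poincar\'e estimate to a boundary-value one on the single exceptional interval to be the main thing to get right; the interpolation step itself is routine.
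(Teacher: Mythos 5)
Your proof is correct, and it reaches \cref{eq:PR} by a route that differs from the paper's in how the fractional order $\gamma$ is handled. Both arguments rest on the same two structural facts: $(I-Q_{\tau,j})v$ has vanishing mean on the macro-intervals of $\mathcal T_j$ that tile $(0,t_j)$ (all of them when $j$ is even, all but $(0,t_1)$ when $j$ is odd), and on the exceptional interval $(0,t_1)$ one must instead use that $z$ vanishes at $0$. The paper exploits these facts directly for each fixed $\gamma\in[0,1]$: it inserts $I-P_{\tau,j}$ into the pairing and quotes the standard projection error bound $\nm{(I-P_{\tau,j})z}_{L^2}\leqslant C_\gamma\tau^\gamma\nm{z}_{{}_0H^\gamma}$, and on $(0,t_1)$ it writes $z=\D_{0+}^{-\gamma}\D_{0+}^{\gamma}z$ and uses \cref{lem:basic-2,lem:regu-frac-deriv} to get $\nm{z}_{L^2(0,t_1)}\leqslant C_\gamma t_1^\gamma\nm{z}_{{}_0H^\gamma(0,t_1)}$. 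You instead prove only the integer endpoints $\gamma=0$ (Cauchy--Schwarz) and $\gamma=1$ (mean-zero Poincar\'e on the double intervals, boundary-anchored Poincar\'e on $(0,t_1)$) and then interpolate the functional $L_j$ using \cref{lem:inter_space} and \cite[Lemma~22.3]{Tartar2007}. What your approach buys is elementarity: it needs no fractional-order approximation estimate for $P_{\tau,j}$ and no fractional calculus at all, only classical Poincar\'e inequalities plus one application of operator interpolation; and since ${}_0H^\gamma(0,t_j)$ is by definition the $K$-method space $[L^2(0,t_j),{}_0H^1(0,t_j)]_{\gamma,2}$, the constant you obtain genuinely depends only on $\gamma$, as you note. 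The paper's version is shorter given that the $P_{\tau,j}$ error bound for all $\gamma\in[0,1]$ is already listed among its standard results. Your identification of $(0,t_1)$ for odd $j$ as the one interval without a moment condition, and your switch there to the trace condition $z(0)=0$, matches exactly the special case the paper also has to treat separately.
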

\begin{proof}
    If $ j $ is even then, by the definitions of $
    P_{\tau,j} $ and $ Q_{\tau,j} $,
    \begin{align*}
    {}&\dual{(I-Q_{\tau,j})v,z}_{(0,t_j)}=
    \dual{(I-Q_{\tau,j})v,(I-P_{\tau,j})z}_{(0,t_j)}  \\
    \leqslant {}&
    \nm{(I-Q_{\tau,j})v}_{L^2(0,t_j)}
    \nm{(I-P_{\tau,j})z}_{L^2(0,t_j)}\\
    \leqslant {}&C_{\gamma}
    \tau^{\gamma}
    \nm{(I-Q_{\tau,j})v}_{L^2(0,t_j)}
    \nm{z}_{{}_0H^\gamma(0,t_j)},
    \end{align*}
    which proves \cref{eq:PR}. If  $ j $ is odd then, also by the definitions of $ P_{\tau,j} $ and $ Q_{\tau,j} $,
    \begin{align}
    {}&\dual{(I-Q_{\tau,j})v,z}_{(0,t_j)} \notag \\
    ={}&
    \dual{(I-Q_{\tau,j})v,z}_{(0,t_1)}+
    \dual{(I-Q_{\tau,j})v,(I-P_{\tau,j})z}_{(t_1,t_j)} \notag \\
    \leqslant{}&
    \nm{(I-Q_{\tau,j})v}_{L^2(0,t_j)}
    \big(
    \nm{z}_{L^2(0,t_1)}+
    \nm{(I-P_{\tau,j})z}_{L^2(t_1,t_j)}
    \big) \notag \\
    \leqslant{}&
    \nm{(I-Q_{\tau,j})v}_{L^2(0,t_j)}
    \big(
    \nm{z}_{L^2(0,t_1)}+
    C_{\gamma} \tau^\gamma
    \nm{z}_{{}_0H^\gamma(0,t_j)}
    \big). \label{eq:PR-1}
    \end{align}
    Since \cref{lem:compose-ID} implies that
    $z = \D_{0+}^{-\gamma}\D_{0+}^{\gamma}z$,
    by \cref{lem:basic-2,lem:regu-frac-deriv} we have
    \[
    \begin{split}
    {}&     \nm{z}_{L^2(0,t_1)}=
    \nm{  \D_{0+}^{-\gamma}\D_{0+}^{\gamma}z}_{L^2(0,t_1)}\\
    \leqslant {}&C_\gamma t_1^\gamma \nm{\D_{0+}^{\gamma}z}_{L^2(0,t_1)}
    \leqslant C_\gamma t_1^\gamma
    \nm{z}_{{}_0H^\gamma(0,t_1)}.
    \end{split}
    \]
    Therefore,
    combining the above inequality and \cref{eq:PR-1} proves \cref{eq:PR} in the case that $ j
    $ is odd. This completes the proof of this lemma.
\end{proof}
\begin{lem}
    \label{lem:R_tau}
    If $ v\in {}_0H^{\gamma}(0,T) $ with $ 0\leqslant\gamma\leqslant1 $, then
    \begin{equation} \label{eq:R-tau-est-L2}
        \nm{(I-Q_{\tau,j})v }_{L^{2}(0,T)}
        \leqslant C_{\rho,\gamma}
        \tau^{\gamma}
        \nm{v}_{{}_0H^{\gamma}(0,T)}
    \end{equation}
    for all $ 1 \leqslant j \leqslant J $.
\end{lem}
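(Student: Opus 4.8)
The plan is to reduce the bound for the modified operator $Q_{\tau,j}$ to the standard Clément estimate for $Q_\tau$ recalled just before the lemma, namely $\nm{(I-Q_\tau)v}_{L^2(0,T)}\leqslant C_{\rho,\gamma}\tau^\gamma\nm{v}_{{}_0H^\gamma(0,T)}$. Writing $e_\tau:=(Q_\tau-Q_{\tau,j})v\in W^0_\tau$, the triangle inequality gives
\[
\nm{(I-Q_{\tau,j})v}_{L^2(0,T)}\leqslant\nm{(I-Q_\tau)v}_{L^2(0,T)}+\nm{e_\tau}_{L^2(0,T)},
\]
so it suffices to control $\nm{e_\tau}_{L^2(0,T)}$; I will in fact bound it by $\nm{(I-Q_\tau)v}_{L^2(0,T)}$ with an absolute constant.

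First I would localize $e_\tau$. Since $Q_\tau v$ and $Q_{\tau,j}v$ agree at every node of the partition $\mathcal T_j$, the continuous piecewise-linear function $e_\tau$ vanishes at all those nodes. On $(t_j,T)$, where $\mathcal T_j$ uses the single intervals $I_{j+1},\dots,I_J$, this forces $e_\tau\equiv0$; on $(0,t_j)$ the nodes of $\mathcal T_j$ are the endpoints of the double intervals $\omega_i$ with $i<j$ and $\snm{i-j}$ odd, which tile $(0,t_j)$. Hence on each such $\omega_i$ the function $e_\tau$ vanishes at both endpoints and equals a scalar multiple $a_i$ of the nodal hat function $\phi_i$ supported on $\omega_i$ with value $1$ at the interior node.

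Next I would pin down $a_i$ using the moment conditions. On each such $\omega_i$ the definition of $Q_{\tau,j}$ gives $\dual{v-Q_{\tau,j}v,1}_{\omega_i}=0$, so
\[
\dual{e_\tau,1}_{\omega_i}=\dual{(Q_\tau-I)v,1}_{\omega_i}+\dual{v-Q_{\tau,j}v,1}_{\omega_i}=-\dual{(I-Q_\tau)v,1}_{\omega_i}.
\]
A direct integration yields $\dual{e_\tau,1}_{\omega_i}=\tfrac12 a_i\snm{\omega_i}$ and $\nm{e_\tau}_{L^2(\omega_i)}^2=\tfrac13 a_i^2\snm{\omega_i}$, whence, by the Cauchy--Schwarz inequality,
\[
\snm{a_i}\leqslant 2\snm{\omega_i}^{-1/2}\nm{(I-Q_\tau)v}_{L^2(\omega_i)},\qquad
\nm{e_\tau}_{L^2(\omega_i)}\leqslant\tfrac{2}{\sqrt3}\nm{(I-Q_\tau)v}_{L^2(\omega_i)}.
\]
Summing over the disjoint intervals $\omega_i$ covering $(0,t_j)$ and using $e_\tau\equiv0$ on $(t_j,T)$ gives $\nm{e_\tau}_{L^2(0,T)}\leqslant\tfrac{2}{\sqrt3}\nm{(I-Q_\tau)v}_{L^2(0,T)}$, and inserting this into the triangle inequality together with the standard $Q_\tau$ estimate yields \cref{eq:R-tau-est-L2}.

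The step demanding the most care is the bookkeeping of nodes and moments for the two parities of $j$: one must check that in both the odd and even cases the moment-constrained intervals $\{\omega_i: i<j,\ \snm{i-j}\text{ odd}\}$ are precisely the double intervals on which $e_\tau$ may fail to vanish at the interior node, that they partition $(0,t_j)$, and that the remaining nodes of $\mathcal T_j$ force $e_\tau$ to vanish elsewhere. Once this combinatorial correspondence is established, the remaining hat-function computation is elementary and, crucially, independent of the local mesh ratios, so the final constant is the same $C_{\rho,\gamma}$ as in the underlying $Q_\tau$ bound.
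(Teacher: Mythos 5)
Your proof is correct and follows essentially the same route as the paper's: both identify $(Q_{\tau,j}-Q_{\tau})v$ as a sum of nodal hat functions on the moment-constrained intervals $\omega_i$, use the condition $\dual{v-Q_{\tau,j}v,1}_{\omega_i}=0$ to express the interior nodal value through $\int_{\omega_i}(v-Q_{\tau}v)$, arrive at the same factor $2/\sqrt{3}$, and then invoke the standard Cl\'ement bound for $Q_{\tau}$. The only cosmetic slip is the assertion that these $\omega_i$ partition $(0,t_j)$ --- when $j$ is odd they tile only $(t_1,t_j)$, while $e_\tau\equiv 0$ on $(0,t_1)$ since both $t_0$ and $t_1$ are nodes of $\mathcal T_j$ --- but this is precisely covered by your own remark that the remaining nodes force $e_\tau$ to vanish elsewhere, so the argument stands.
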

\begin{proof}
    If $j=1$ then \cref{eq:R-tau-est-L2} is standard, and so we assume that $2\leqslant
    j\leqslant J$. By the definition of $ Q_{\tau,j} $, $ \left(Q_{\tau,j}v\right)(0) =
    \left(Q_{\tau}v\right)(0) $, $ (Q_{\tau,j}v)|_{(t_j,T)} = (Q_{\tau} v)|_{(t_j,T)}
    $, and $ \left(Q_{\tau,j}v\right)(t_i) = \left(Q_{\tau}v\right)(t_i) $ if
    $1\leqslant i\leqslant j$ and $j-i$ is even. If $ 1\leqslant i\leqslant j-1 $ and
    $j-i$ is odd, then a straightforward calculation gives
    \begin{equation}\label{eq:L-R}
        (Q_{\tau,j}v - Q_{\tau} v)(t_i) =
        \frac{2}{\snm{\omega_i}}
        \int_{\omega_i} (v-Q_{\tau} v)(t) \, \mathrm{d}t ,
    \end{equation}
    and hence
    \begin{align*}
        \nm{Q_{\tau,j}v - Q_{\tau}  v}_{L^2(\omega_i)}
        & =
        \sqrt{\frac{\snm{\omega_i}}{3}}
        \snm{(Q_{\tau,j}v -Q_{\tau} v)(t_i)}
        \leqslant\frac2{\sqrt{3}}
        \nm{(I - Q_{\tau} ) v}_{L^2(\omega_i)},
    \end{align*}
    which implies
    \[
    \nm{Q_{\tau,j}v -Q_{\tau} v}_{L^2(0,T)} \leqslant
    \frac{2}{\sqrt{3}} \nm{(I - Q_{\tau} ) v}_{L^2(0,T)}.
    \]
    It follows that
    \[
    \begin{split}
    \nm{(I-Q_{\tau,j})v}_{L^2(0,T)} \leqslant{}&
    \frac{2 +\sqrt3}{\sqrt{3}} \nm{(I-Q_{\tau} )v}_{L^2(0,T)}
    \leqslant{}C_{\rho,\gamma}
    \tau^\gamma \nm{v}_{{}_0H^\gamma(0,T)}.
    \end{split}
    \]
    This proves \cref{eq:R-tau-est-L2} and thus concludes the proof.
\end{proof}

\begin{lem}
    \label{lem:R_tau-est-Hs}
    Assume that $ 0<\beta<1/2 $. If $ v\in {}_0H^{\gamma}(0,T) $ with $ \beta+1\leqslant
    \gamma\leqslant 2 $, then
    \begin{equation}\label{eq:R-tau-est-Hs}
    \nm{\left(v-Q_{\tau,j} v\right)'}_{{}_0H^\beta(0,T)}
    \leqslant{}C_{\rho,\beta,\gamma,T}
    \tau^{\gamma-1-\beta}
    \nm{v}_{{}_0H^{\gamma}(0,T)},
    \end{equation}
    for all $   1 \leqslant j \leqslant J$.
\end{lem}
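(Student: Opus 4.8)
Since $0<\beta<1/2$, the spaces ${}_0H^\beta(0,T)$ and $H^\beta(0,T)$ coincide with equivalent norms, so it suffices to control the intrinsic (Gagliardo) norm of $e:=(v-Q_{\tau,j}v)'$. The idea is to regard $\mathcal E_j:v\mapsto(v-Q_{\tau,j}v)'$ as a linear operator and to interpolate between the two endpoint estimates
\[
\nm{\mathcal E_j v}_{{}_0H^\beta(0,T)}\lesssim\nm{v}_{{}_0H^{\beta+1}(0,T)},\qquad
\nm{\mathcal E_j v}_{{}_0H^\beta(0,T)}\lesssim\tau^{1-\beta}\nm{v}_{{}_0H^2(0,T)}.
\]
Writing ${}_0H^\gamma(0,T)=[{}_0H^{\beta+1}(0,T),{}_0H^2(0,T)]_{\theta,2}$ with $\theta=(\gamma-\beta-1)/(1-\beta)$ by \cref{lem:inter_space} and applying the operator interpolation result \cite[Lemma~22.3]{Tartar2007} (the target space ${}_0H^\beta$ being fixed), these two bounds combine to the claimed factor $\tau^{(1-\beta)\theta}=\tau^{\gamma-1-\beta}$. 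Thus the whole lemma reduces to a stability endpoint ($\gamma=\beta+1$) and an approximation endpoint ($\gamma=2$).

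For the approximation endpoint I would use the equivalent norm
\[
\nm{w}_{H^\beta(0,T)}^2\sim\nm{w}_{L^2(0,T)}^2+\int_0^T\!\!\int_0^T\frac{\snm{w(t)-w(s)}^2}{\snm{t-s}^{1+2\beta}}\,\mathrm dt\,\mathrm ds
\]
with $w=e$. Since $Q_{\tau,j}v$ is piecewise linear, $e|_{I_k}=v'|_{I_k}-(\text{const})$ is smooth on each $I_k$, but $e$ jumps across the interior nodes. I would split the double integral into the diagonal blocks $I_k\times I_k$ and the off-diagonal blocks $I_k\times I_l$ ($k\neq l$). A scaling argument bounds each diagonal contribution by $\tau_k^{2(1-\beta)}\nm{e'}_{L^2(I_k)}^2=\tau_k^{2(1-\beta)}\nm{v''}_{L^2(I_k)}^2$ (using $(Q_{\tau,j}v)''=0$ on $I_k$), while the $L^2$ part is handled by the local estimate $\nm{e}_{L^2(I_k)}\lesssim\tau_k\nm{v''}_{L^2(\omega_k)}$, which is of higher order $\tau$. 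The off-diagonal blocks carry the nodal jumps $\jmp{e}_i=-\jmp{(Q_{\tau,j}v)'}_i$ of the piecewise-constant part against the kernel $\snm{t-s}^{-1-2\beta}$; because $2\beta<1$ this kernel is integrable across each pair of elements, so after summing with the local quasi-uniformity encoded in $\rho$ one again recovers $\sum_k\tau_k^{2(1-\beta)}\nm{v''}_{L^2(\omega_k)}^2\lesssim\tau^{2(1-\beta)}\nm{v}_{{}_0H^2(0,T)}^2$.

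The stability endpoint is proved by applying the same Gagliardo analysis to $w=(Q_{\tau,j}v)'$, viewed as a piecewise-constant local average of $v'$: one shows $\nm{(Q_{\tau,j}v)'}_{H^\beta}\lesssim\nm{v'}_{H^\beta}$, whence $\nm{e}_{H^\beta}\lesssim\nm{v'}_{H^\beta}\sim\nm{v}_{{}_0H^{\beta+1}}$ by \cref{lem:regu-frac-deriv} (with $\gamma=1$ there, so that $\nm{\D_{0+}^{1}v}_{{}_0H^\beta}\sim\nm{v}_{{}_0H^{\beta+1}}$). Throughout, the nodal values of $Q_{\tau,j}$ are reduced to those of $Q_\tau$ exactly as in the proof of \cref{lem:R_tau} (see \cref{eq:L-R}), so that all local estimates carry the constant $C_{\rho,\beta,\gamma,T}$. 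I expect the off-diagonal jump contribution to the Gagliardo seminorm to be the main obstacle: the error $e$ is genuinely nonconforming (its jumps do not vanish), which is precisely what forbids a naive $[L^2,{}_0H^1]$ interpolation of the interpolation error, and it is the restriction $\beta<1/2$ that makes these jump contributions finite and summable with the correct power of $\tau$.
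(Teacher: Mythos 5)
Your proposal is correct in substance but takes a genuinely different route from the paper, so let me compare. The paper proves \cref{eq:R-tau-est-Hs} in a single pass for every $\gamma\in[\beta+1,2]$: following the proof of \cite[Lemma 4.1]{Li2018Error}, it bounds $\nm{g}_{H^\beta(0,T)}^2$ for $g=(v-Q_{\tau,j}v)'$ by a broken quantity $\mathbb I_1+\mathbb I_2$, where $\mathbb I_1$ is the sum of elementwise Gagliardo seminorms (your diagonal blocks) and $\mathbb I_2$ is a sum of weighted $L^2$ integrals with weights $(t_i-t)^{-2\beta}+(t-t_{i-1})^{-2\beta}$ that absorb all off-diagonal interaction; $\mathbb I_1$ is then handled by exactly the elementwise scaling you propose (using that $(Q_{\tau,j}v)'$ is constant on each $I_i$), and $\mathbb I_2$ by a scaled trace/Hardy-type inequality on the reference interval plus the reduction of $Q_{\tau,j}$ to $Q_\tau$ through \cref{eq:L-R}. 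You instead isolate two endpoints and interpolate the operator $v\mapsto(v-Q_{\tau,j}v)'$ via \cite[Lemma 22.3]{Tartar2007}; the bookkeeping $\theta=(\gamma-\beta-1)/(1-\beta)$ and $\tau^{(1-\beta)\theta}=\tau^{\gamma-1-\beta}$ is right, and the device is consistent with how the paper itself proves \cref{lem:inv}. What interpolation buys you is that the $\gamma$-dependent scaling is done only once, at $\gamma=2$, where $v''\in L^2$ makes the diagonal estimate elementary. What it costs you is the stability endpoint: $\nm{(Q_{\tau,j}v)'}_{H^\beta(0,T)}\lesssim\nm{v'}_{H^\beta(0,T)}$ is true but is not a one-liner, since $(Q_{\tau,j}v)'$ is piecewise constant and you still need an inverse-type inequality of the form $\nm{p}_{H^\beta(0,T)}^2\lesssim\nm{p}_{L^2(0,T)}^2+\sum_i\tau_i^{-2\beta}\nm{p}_{L^2(I_i)}^2$ for piecewise constants --- i.e.\ precisely the $\mathbb I_2$ machinery --- together with local fractional-order approximation estimates for $Q_\tau$ and the patch argument of \cref{eq:L-R}; so at this endpoint you end up rebuilding most of the paper's direct argument. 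Two smaller points of care: your off-diagonal analysis is cleaner if phrased as $\snm{e(s)-e(t)}^2\leqslant 2e(s)^2+2e(t)^2$ followed by integrating the kernel in the far variable, which is exactly how one lands on $\mathbb I_2$; and the resulting terms $\int_{I_k}e^2(t)(t_k-t)^{-2\beta}\,\mathrm{d}t$ cannot be bounded by $\tau_k^{-2\beta}\nm{e}_{L^2(I_k)}^2$ alone (the weight is unbounded at the node), so the weighted trace inequality using $\nm{e'}_{L^2(I_k)}=\nm{v''}_{L^2(I_k)}$ is genuinely needed there. None of this is an obstruction, and all constants come out depending on $\rho$ as required, so your plan closes.
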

\begin{proof}
    For simplicity, set $ g := (v-Q_{\tau,j}v)' $. Following the proof of \cite[Lemma
    4.1]{Li2018Error}, we obtain
    \[
        \nm{g}_{{}_0H^\beta(0,T)}^2 \leqslant
            C_{\beta}\nm{g}_{H^\beta(0,T)}^2
            \leqslant C_{\beta,T} \big( \mathbb I_1 + \mathbb I_2 \big),
    \]
    where
    \begin{align*}
    \mathbb I_1 &:= \sum_{i=1}^J
    \int_{I_i} \int_{I_i} \frac{\snm{g(s)-g(t)}^2}{\snm{s-t}^{1+2\beta}}
    \, \mathrm{d}s\mathrm{d}t,\\
    \mathbb I_2 &:= \sum_{i=1}^J \int_{I_i} g^2(t) \left(
    (t_i-t)^{-2\beta} + (t-t_{i-1})^{-2\beta}
    \right) \, \mathrm{d}t.
    \end{align*}
    As $  \mathbb I_1$ can be estimated by that
    \begin{align*}
    \mathbb I_1  & \leqslant
    \sum_{i=1}^J \tau_i^{2(\gamma-1-\beta)} \int_{I_i} \int_{I_i}
    \frac{\snm{g(s)-g(t)}^2}{\snm{s-t}^{1+2(\gamma-1)}} \, \mathrm{d}s\mathrm{d}t \\
    & =
    \sum_{i=1}^J \tau_i^{2(\gamma-1-\beta)} \int_{I_i} \int_{I_i}
    \frac{\snm{v'(s)-v'(t)}^2}{\snm{s-t}^{1+2(\gamma-1)}} \, \mathrm{d}s\mathrm{d}t \\
    & \leqslant
    C_{\gamma,T}
    \tau^{2(\gamma-1-\beta)} \nm{v'}_{H^{\gamma-1}(0,T)}^2\\
    & \leqslant
    C_{\gamma,T}
    \tau^{2(\gamma-1-\beta)} \nm{v}_{{}_0H^{\gamma}(0,T)}^2,
    \end{align*}
    it remains to prove
    \begin{equation}
    \label{eq:I2}
    \mathbb I_2
    \leqslant C_{\rho,\beta,\gamma,T}
    \tau^{2(\gamma-1-\beta)}
    \nm{v}_{{}_0H^{\gamma}(0,T)}^2.
    \end{equation}
    By \cite[Theorem 11.2]{Lions-I} and the fact that $
    H^{\gamma-1}(0,1) $ is continuously embedded in $ H^\beta(0,1) $, a simple calculation yields
    \begin{align*}
    {}&  \int_0^1 \snm{z(t)}^2
    \left( t^{-2\beta} + (1-t)^{-2\beta} \right)  \mathrm{d}t
    \leqslant C_\beta \nm{z}_{H^\beta(0,1)}^2
    \leqslant C_{\beta,\gamma} \nm{z}_{H^{\gamma-1}(0,1)}^2 \\
    \leqslant{} &
    C_{\beta,\gamma} \bigg(
    \nm{z}_{L^2(0,1)}^2 +
    \int_0^1 \int_0^1
    \frac{\snm{z(s)-z(t)}^2}{\snm{s-t}^{1+2(\gamma-1)}} \, \mathrm{d}s\mathrm{d}t
    \bigg),
    \end{align*}
    where $ z \in H^{\gamma-1}(0,1) $.
    Hence, a standard scaling argument gives
    \begin{align}
    \mathbb I_2 & \leqslant
    C_{\beta,\gamma} \sum_{i=1}^J
    \bigg(
    \tau_i^{-2\beta} \nm{g}_{L^2(I_i)}^2 +
    \tau_i^{2(\gamma-1-\beta)} \int_{I_i} \int_{I_i}
    \frac{\snm{v'(s)-v'(t)}^2}{\snm{s-t}^{1+2(\gamma-1)}} \, \mathrm{d}s\mathrm{d}t
    \bigg) \notag \\
    & \leqslant C_{\beta,\gamma,T}
    \bigg(
    \sum_{i=1}^J \tau_i^{-2\beta} \nm{g}_{L^2(I_i)}^2 +
    \tau^{2(\gamma-1-\beta)} \nm{v'}_{H^{\gamma-1}(0,T)}^2
    \bigg)
    \notag
    \\
    & \leqslant C_{\beta,\gamma,T}
    \bigg(
    \sum_{i=1}^J \tau_i^{-2\beta} \nm{g}_{L^2(I_i)}^2 +
    \tau^{2(\gamma-1-\beta)} \nm{v}_{{}_0H^{\gamma}(0,T)}^2
    \bigg).
    \notag
    \end{align}
    To prove \cref{eq:I2}, it suffices to prove
    \begin{equation}
    \label{eq:g-L2}
    \sum_{i=1}^J \tau_i^{-2\beta} \nm{g}_{L^2(I_i)}^2
    \leqslant C_{\rho,\beta,\gamma,T}
    \tau^{2(\gamma-1-\beta)} \nm{v}_{{}_0H^{\gamma}(0,T)}^2.
    \end{equation}
    By \cref{eq:L-R}, we have
    \[
    \nm{(Q_\tau v-Q_{\tau,j}v)'}_{L^2(\omega_i)}\leqslant
    C_\rho \snm{\omega_i}^{-1}\nm{(I-Q_\tau) v}_{L^2(\omega_i)},
    \]
    for all $ 1\leqslant i\leqslant j-1 $ such that $j-i$ is odd. Hence it follows that
    \[
    \sum_{i=1}^J \tau_i^{-2\beta} \nm{(Q_\tau v-Q_{\tau,j}v)'}_{L^2(I_i)}^2
    \leqslant C_\rho
    \sum_{i=1}^J \tau_i^{-2\beta-2} \nm{(I-Q_\tau) v}_{L^2(I_i)}^2,
    \]
    which yields the inequality
    \begin{equation*}\small
    \begin{split}
    \sum_{i=1}^J \tau_i^{-2\beta} \nm{g}_{L^2(I_i)}^2
    \!\!    \leqslant\! {}&C_\rho
    \sum_{i=1}^J
    \tau_i^{-2\beta-2}\left( \nm{(I-Q_\tau) v}_{L^2(I_i)}^2\!\!+\!
    \tau_i^{2} \nm{(v-Q_\tau v)'}_{L^2(I_i)}^2\right).
    \end{split}
    \end{equation*}
    Therefore, by the standard estimates
    \[
    \begin{split}
    \sum_{i=1}^J \tau_i^{-2\beta} \nm{(v-Q_\tau v)'}_{L^2(I_i)}^2
    \leqslant {}&C_{\rho,\beta,\gamma,T}
    \tau^{2(\gamma-1-\beta)}
    \nm{v}_{{}_0H^{\gamma}(0,T)}^2,\\
    \sum_{i=1}^J \tau_i^{-2\beta-2} \nm{(I-Q_\tau) v}_{L^2(I_i)}^2
    \leqslant {}&C_{\rho,\beta,\gamma,T}
    \tau^{2(\gamma-1-\beta)}
    \nm{v}_{{}_0H^{\gamma}(0,T)}^2,
    \end{split}
    \]
    we obtain \cref{eq:g-L2}. This completes the proof of this lemma.
\end{proof}
\begin{lem}
  \label{lem:inv}
  If $w\in W_\tau$, then
  \begin{equation}
    \label{eq:inv}
    \nm{w}_{{}_0H^{\beta+1}(0,T)}
    \leqslant {}C_{\sigma,\beta,\gamma,T}
    \tau^{\gamma-\beta-1}
    \nm{w}_{{}_0H^{\gamma}(0,T)}
  \end{equation}
  for all $0\leqslant\beta<1/2$ and $0\leqslant\gamma\leqslant\beta+1$.
\end{lem}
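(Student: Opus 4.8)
The plan is to reduce the estimate, via the fractional–calculus isomorphisms already available, to a single inverse estimate for piecewise-constant functions, and then to fill in the whole range of exponents by an operator interpolation. Since $\beta+1>1/2$, the quantity $\nm{w}_{{}_0H^{\beta+1}(0,T)}$ is finite only if $w(0)=0$, so we may assume $w\in W^0_\tau$; then $w=\D_{0+}^{-1}w'$ and $g:=w'\in\widehat W_\tau$ is piecewise constant. Applying \cref{lem:regu-frac-int-leq} with integration order $1$ gives $\nm{w}_{{}_0H^{\beta+1}(0,T)}=\nm{\D_{0+}^{-1}g}_{{}_0H^{\beta+1}(0,T)}\leqslant C_\beta\nm{g}_{{}_0H^{\beta}(0,T)}$, while for $\gamma\geqslant1$ \cref{lem:regu-frac-deriv} (with $\delta=1$, noting $\D_{0+}^1w=w'$) gives $\nm{g}_{{}_0H^{\gamma-1}(0,T)}\leqslant C_\gamma\nm{w}_{{}_0H^{\gamma}(0,T)}$. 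Thus, for $\gamma\geqslant1$ the claim reduces to the piecewise-constant inverse estimate $\nm{g}_{{}_0H^{\beta}(0,T)}\leqslant C\tau^{(\gamma-1)-\beta}\nm{g}_{{}_0H^{\gamma-1}(0,T)}$ with $0\leqslant\gamma-1\leqslant\beta<1/2$; for $0\leqslant\gamma<1$ it reduces to the same estimate with source $L^2$ together with the classical $P_1$ inverse estimate $\nm{w'}_{L^2(0,T)}\leqslant C\tau^{\gamma-1}\nm{w}_{{}_0H^{\gamma}(0,T)}$.

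The technical heart is therefore the base estimate
\[
\nm{g}_{{}_0H^{s}(0,T)}\leqslant C_{\sigma,s,T}\,\tau^{-s}\,\nm{g}_{L^2(0,T)},\qquad 0\leqslant s<1/2,\quad g\in\widehat W_\tau .
\]
Because ${}_0H^{s}$ coincides with $H^{s}$ for $s<1/2$, I would bound the Sobolev--Slobodeckij seminorm of $g$ through the characterization already used in \cref{lem:R_tau-est-Hs} (Theorem~11.2 of \cite[Chapter~1]{Lions-I}). Since $g$ is constant on each $I_i$, the intra-element contributions vanish and only the cross-element double integrals $\int_{I_i}\int_{I_k}|g_i-g_k|^2\,|x-y|^{-1-2s}\,\mathrm{d}x\,\mathrm{d}y$ survive; summing them over all pairs and using shape/quasi-uniformity to control the ratios $\tau_i/\tau_k$ and the inter-element distances produces the factor $\tau^{-2s}$ against $\nm{g}_{L^2(0,T)}^2$. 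This global summation, as opposed to a single-element scaling, is the main obstacle, and it is precisely where the dependence on $\sigma$ enters the constant.

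Finally, to pass from the base pair $(0,\beta)$ to the full fractional source range I would interpolate at the operator level, since for an inverse estimate the plain interpolation inequality points the wrong way. Let $\widehat P_\tau$ be the $L^2$-orthogonal projection onto $\widehat W_\tau$. The base estimate together with $L^2$-stability yields $\widehat P_\tau:{}_0H^{0}\to{}_0H^{\beta}$ with norm $\leqslant C\tau^{-\beta}$, while the $H^{\beta}$-stability of $\widehat P_\tau$ (elementary for $\beta<1/2$ on a quasi-uniform mesh) yields $\widehat P_\tau:{}_0H^{\beta}\to{}_0H^{\beta}$ with norm $\leqslant C$. Equipping the common target ${}_0H^{\beta}$ with the two $\tau$-weighted norms $\tau^{\beta}\nm{\cdot}_{{}_0H^\beta(0,T)}$ and $\nm{\cdot}_{{}_0H^\beta(0,T)}$ and applying the interpolation of operators \cite[Lemma~22.3]{Tartar2007}, with \cref{lem:inter_space} identifying $[{}_0H^{0},{}_0H^{\beta}]_{\theta,2}={}_0H^{\theta\beta}$, gives $\nm{\widehat P_\tau g}_{{}_0H^\beta(0,T)}\leqslant C\tau^{r-\beta}\nm{g}_{{}_0H^{r}(0,T)}$ for $r=\theta\beta$. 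Since $\widehat P_\tau g=g$ on $\widehat W_\tau$, this is exactly the required piecewise-constant inverse estimate, and combining it with the reductions of the first paragraph completes the proof; the branch $0\leqslant\gamma<1$ is handled identically, with $\widehat P_\tau$ replaced by the $L^2$-projection onto $W^0_\tau$ and the classical $H^1$-stability of that projection.
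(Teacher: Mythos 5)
Your proposal is correct in substance but follows a genuinely different route for the fractional range of $\gamma$. Both arguments hinge on the same structural facts: one may assume $w(0)=0$ (you make this explicit, the paper leaves it implicit), $g:=w'$ is piecewise constant so the intra-element Slobodeckij contributions vanish, and \cref{lem:regu-frac-deriv} converts between $\nm{w}_{{}_0H^{\beta+1}(0,T)}$, $\nm{g}_{{}_0H^{\beta}(0,T)}$ and $\nm{g}_{{}_0H^{\gamma-1}(0,T)}$. The divergence is in how the fractional-source inverse estimate for $g$ is obtained. The paper treats the whole range $1\leqslant\gamma\leqslant\beta+1$ in a single direct computation: starting from the weighted characterization of \cite[Lemma~4.1]{Li2018Error}, it substitutes the pointwise bound $(t_i-t)^{-2\beta}\leqslant\tau_i^{2(\gamma-\beta-1)}(t_i-t)^{2-2\gamma}$ and recognizes the resulting weighted sum as $\nm{w'}_{{}_0H^{\gamma-1}(0,T)}^2$, reserving operator interpolation only for the elementary $\beta=0$ case. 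You instead prove only the endpoint estimate $\nm{g}_{{}_0H^{\beta}(0,T)}\leqslant C\tau^{-\beta}\nm{g}_{L^2(0,T)}$ directly (your cross-element summation does yield $C_\sigma\tau^{-2\beta}$, since $\sum_{k\neq i}\int_{I_i}\int_{I_k}|x-y|^{-1-2\beta}\lesssim\tau_i^{1-2\beta}$ for $\beta<1/2$), and then fill in $0<\gamma-1<\beta$ by interpolating the $L^2$-projection $\widehat P_\tau$ between $L^2$ and ${}_0H^{\beta}$ with $\tau$-weighted target norms. That is a valid and standard device, and it is arguably cleaner conceptually, but it imports an extra ingredient the paper never needs: the ${}_0H^{\beta}$-stability of $\widehat P_\tau$ for $0<\beta<1/2$, which you assert as elementary but whose usual proof itself rests on a local inverse estimate together with the localization $\nm{v}_{H^\beta(0,T)}^2\lesssim\sum_i\bigl(\nm{v}_{H^\beta(I_i)}^2+\tau_i^{-2\beta}\nm{v}_{L^2(I_i)}^2\bigr)$; this should be spelled out (or replaced by the paper's weight substitution) before the argument is complete. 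The paper's route buys self-containedness at the cost of a less transparent pointwise manipulation; yours buys modularity at the cost of an unproved projection-stability lemma.
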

\begin{proof}
  If $ \beta = 0 $, then \cref{eq:inv} is trivial for $ \gamma = 1 $ and standard for
  $ \gamma = 0 $, and hence applying \cite[Lemma 22.3]{Tartar2007} yields
  \cref{eq:inv} for $ 0 < \gamma < 1 $. It remains therefore to prove \cref{eq:inv}
  for $ 0 < \beta < 1/2 $ and $ 0 \leqslant \gamma \leqslant \beta+1 $. To this end,
  we assume $ 0 < \beta < 1/2 $. For any $ 1 \leqslant \gamma \leqslant \beta+1 $,
  following the proof of \cite[Lemma 4.1]{Li2018Error}, we obtain
  \begin{small}
  \begin{align*}
    {}& \nm{w'}_{{}_0H^\beta(0,T)}^2 \leqslant
    C_{\beta,T}\nm{w'}_{H^\beta(0,T)}^2  \\
    \leqslant {}&C_{\beta,T}  \sum_{i=1}^J \int_{I_i}
    \snm{w'(t)}^2 \left(
      (t_i-t)^{-2\beta} + (t-t_{i-1})^{-2\beta}
    \right) \mathrm{d}t\\
    \leqslant {}&C_{\beta,T}
    \sum_{i=1}^J \tau_i^{2\gamma-2\beta-2}\int_{I_i}
    \snm{w'(t)}^2 \left(
      (t_i-t)^{2-2\gamma} + (t-t_{i-1})^{2-2\gamma}
    \right)\mathrm{d}t\\
    \leqslant {}&C_{\sigma,\beta,\gamma,T}
    \tau^{2\gamma-2\beta-2}
    \nm{w'}_{{}_0H^{\gamma-1}(0,T)}^2.
  \end{align*}
  \end{small}
  Hence, by the estimates
  \[
    \nm{w}_{{}_0H^{\beta+1}(0,T)} \leqslant C_\beta \nm{w'}_{{}_0H^\beta(0,T)}
    \text{ and }
    \nm{w'}_{{}_0H^{\gamma-1}(0,T)} \leqslant
    C_\gamma \nm{w}_{{}_0H^\gamma(0,T)},
  \]
  which can be easily proved by \cref{lem:regu-frac-deriv}, we have
  \begin{align*}
    \nm{w}_{{}_0H^{\beta+1}(0,T)} \leqslant {}& C_\beta
    \nm{w'}_{{}_0H^{\beta}(0,T)}
    \leqslant C_{\sigma,\beta,\gamma,T}
    \tau^{\gamma-\beta-1}
    \nm{w'}_{{}_0H^{\gamma-1}(0,T)}\\
    \leqslant {}&
    C_{\sigma,\beta,\gamma,T}
    \tau^{\gamma-\beta-1}
    \nm{w}_{{}_0H^{\gamma}(0,T)},
  \end{align*}
  namely \cref{eq:inv} holds for $ 1 \leqslant \gamma \leqslant \beta+1 $. In
  addition, for any $ 0 \leqslant \gamma < 1 $, since we have already proved
  \begin{align*}
    \nm{w}_{{}_0H^1(0,T)} &\leqslant C_{\sigma,\gamma,T}
    \tau^{\gamma-1} \nm{w}_{{}_0H^\gamma(0,T)}, \\
    \nm{w}_{{}_0H^{\beta+1}(0,T)} &\leqslant {}C_{\sigma,\beta,T}
    \tau^{-\beta}
    \nm{w}_{{}_0H^{1}(0,T)},
  \end{align*}
  it is clear that \cref{eq:inv} holds. This completes the proof.
\end{proof}

\begin{lem}
    \label{lem:conv-theta-f-Hs}
    If $ f \in {}_0H^{2-\alpha}(0,T;L^2(\Omega)) $, then
    \begin{equation}\label{eq:conv-theta-f-Hs}
    \begin{aligned}
    & \nm{(U-Q_{\tau,j} R_h u)'}_{{}_0H^{(\alpha-1)/2}(0,t_j;L^2(\Omega))} +
    \nm{(U-Q_{\tau,j}R_h u)(t_j)}_{\dot H^1(\Omega)} \\
    \lesssim{} &\big( \tau^{(3-\alpha)/2} \!\!+\! \varepsilon_2(\alpha,\tau,h)\big)
    \nm{f}_{{}_0\!H^{2-\alpha}(0,T;L^2(\Omega))},
    \end{aligned}
    \end{equation}
    for all $1\leqslant j\leqslant J$, where $\varepsilon_2(\alpha,\tau,h)$ is defined by
    \cref{eq:eps2}.
\end{lem}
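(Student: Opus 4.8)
The plan is to run an energy argument on the error $\theta:=U-Q_{\tau,j}R_hu$, comparing the discrete equation with the strong form of the continuous problem. Since $u_0=u_1=0$, taking $\gamma=\beta=0$ in \cref{thm:regu-pde} gives $\D_{0+}^{\alpha}u-\Delta u=f$ in $L^2(0,T;L^2(\Omega))$, and because $u(0)=0$ this is equivalent, via \cref{lem:DD}, to $\D_{0+}^{\alpha-1}u'-\Delta u=f$. Pairing this identity with an arbitrary $V\in\widehat W_\tau\otimes S_h$ and subtracting it from \cref{eq:algor_sol} (whose data terms vanish here) produces the Galerkin orthogonality $\dual{\D_{0+}^{\alpha-1}(U-u)',V}_{\Omega\times(0,T)}+\dual{\nabla(U-u),\nabla V}_{\Omega\times(0,T)}=0$. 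Writing $U-u=\theta-\rho$ with $\rho:=u-Q_{\tau,j}R_hu$ then yields the error equation $\dual{\D_{0+}^{\alpha-1}\theta',V}+\dual{\nabla\theta,\nabla V}=\dual{\D_{0+}^{\alpha-1}\rho',V}+\dual{\nabla\rho,\nabla V}$ for all such $V$.

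For coercivity I would test with $V:=\theta'$ truncated to $(0,t_j)$, which is admissible because $\theta\in W_\tau\otimes S_h$ makes $\theta'$ piecewise constant in time and $t_j$ is a grid node. The time term is then $\dual{\D_{0+}^{\alpha-1}\theta',\theta'}_{\Omega\times(0,t_j)}$; interpreting this $L^2$ pairing as the $H^{(\alpha-1)/2}$ duality and invoking \cref{lem:coer} with $\gamma=(\alpha-1)/2<1/2$ bounds it below by $\cos(\tfrac{(\alpha-1)\pi}{2})\nm{\D_{0+}^{(\alpha-1)/2}\theta'}_{L^2(0,t_j;L^2(\Omega))}^2$, which by \cref{lem:regu-frac-deriv} is comparable to $\nm{\theta'}_{{}_0H^{(\alpha-1)/2}(0,t_j;L^2(\Omega))}^2$. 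Since $\theta(0)=0$, the spatial term integrates in time to the endpoint energy $\tfrac12\nm{\theta(t_j)}_{\dot H^1(\Omega)}^2$. Thus the left-hand side dominates the square of the quantity on the left of \cref{eq:conv-theta-f-Hs}, and it remains to bound the consistency functional on the right by the stated rate times that quantity.

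To estimate the consistency terms I would split $\rho=(I-Q_{\tau,j})u+(I-R_h)Q_{\tau,j}u$, using that $Q_{\tau,j}$ and $R_h$ commute. The Ritz orthogonality $\dual{\nabla(I-R_h)w,\nabla v_h}_\Omega=0$ annihilates the Ritz part in $\dual{\nabla\rho,\nabla\theta'}$, leaving a purely temporal gradient term $\dual{\nabla(I-Q_{\tau,j})u,\nabla\theta'}$, a temporal fractional term $\dual{\D_{0+}^{\alpha-1}(I-Q_{\tau,j})u',\theta'}$, and a mixed term $\dual{(I-R_h)\D_{0+}^{\alpha-1}Q_{\tau,j}u',\theta'}$. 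The two temporal terms carry the $\tau^{(3-\alpha)/2}$ rate: the fractional one is handled through the $H^{(\alpha-1)/2}$ duality together with \cref{lem:R_tau-est-Hs} (with $\beta=(\alpha-1)/2$), while the gradient one is integrated by parts in time, producing an endpoint contribution controlled by $\nm{(I-Q_{\tau,j})u(t_j)}_{\dot H^1(\Omega)}$ against the coercive $\nm{\theta(t_j)}_{\dot H^1(\Omega)}$ and a bulk contribution handled via \cref{lem:PR,lem:R_tau} and the piecewise-linear bound $\nm{\theta}_{C([0,t_j];\dot H^1(\Omega))}\leqslant\max_{i\leqslant j}\nm{\theta(t_i)}_{\dot H^1(\Omega)}$, which is why the statement is phrased uniformly in $j$. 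The spatial rate $\varepsilon_2$ originates from the mixed term, where the Ritz bound $\nm{(I-R_h)w}_{L^2(\Omega)}\lesssim h\nm{w}_{\dot H^1(\Omega)}$ supplies the factor $h$ and the remaining temporal factor comes from the regularity of $u$ furnished by \cref{thm:regu-pde,lem:interp}.

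The two regimes of $\varepsilon_2$ in \cref{eq:eps2} come from how much temporal regularity is available for the mixed term. For $1<\alpha\leqslant3/2$ the quantity $\D_{0+}^{\alpha-1}Q_{\tau,j}u'$ is controlled directly from the regularity of $u$, giving the $h^{3/\alpha-1}$ rate, whereas for $3/2<\alpha<2$ the temporal regularity is borderline and one must spend the inverse estimate \cref{lem:inv} on the discrete factor $Q_{\tau,j}u'$, which is precisely the source of the negative power $\tau^{3/2-\alpha}$ and of the constant $C_\sigma$. I expect the main obstacle to be this mixed term: keeping the Ritz and the fractional-in-time estimates simultaneously sharp across the whole range $1<\alpha<2$, and in particular arranging the gradient consistency term so that only the admissible norms $\nm{\theta'}_{{}_0H^{(\alpha-1)/2}(0,t_j;L^2(\Omega))}$ and $\nm{\theta(t_j)}_{\dot H^1(\Omega)}$ appear on the right. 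This is exactly where the special node-matching and orthogonality properties built into $Q_{\tau,j}$ and $\widehat W_{\tau,j}$ are indispensable, since they make the duality estimate \cref{lem:PR} applicable with its extra factor $\tau^{(\alpha-1)/2}$.
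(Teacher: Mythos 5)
Your overall architecture is the same as the paper's: the energy identity for $\xi_j=U-Q_{\tau,j}R_hu$ tested with $\xi_j'$ on $(0,t_j)$, coercivity via \cref{lem:coer,lem:regu-frac-deriv}, the three-way split of the consistency error into a temporal fractional term, a gradient term, and a mixed Ritz term, and the use of \cref{lem:R_tau-est-Hs} for the first and of \cref{lem:inv} (source of $C_\sigma\tau^{3/2-\alpha}$) for the third when $3/2<\alpha<2$. The gap is in your treatment of the gradient consistency term. The paper integrates by parts in \emph{space}, not time: using $u\in{}_0H^{2-\alpha}(0,T;\dot H^2(\Omega))$ and the Ritz orthogonality it writes
\[
\dual{\nabla(u-Q_{\tau,j}R_hu),\nabla\xi_j'}_{\Omega\times(0,t_j)}
=\dual{(Q_{\tau,j}-I)\Delta u,\xi_j'}_{\Omega\times(0,t_j)},
\]
and only then applies \cref{lem:PR} with $\gamma=(\alpha-1)/2$ followed by \cref{lem:R_tau} with exponent $2-\alpha$, giving $\tau^{(\alpha-1)/2}\cdot\tau^{2-\alpha}=\tau^{(3-\alpha)/2}$ against the \emph{controlled} norm $\nm{\xi_j'}_{{}_0H^{(\alpha-1)/2}(0,t_j;L^2(\Omega))}$. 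This spatial integration by parts is the essential trick: it moves $\xi_j'$ from the $\dot H^1(\Omega)$ pairing into the $L^2(\Omega)$ pairing, which is the only one the coercivity provides.

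Your alternative — integrating by parts in \emph{time} — does not deliver the rate as described, for two reasons. First, the bulk term it produces, $\dual{\nabla((I-Q_{\tau,j})u)',\nabla\xi_j}_{\Omega\times(0,t_j)}$, carries the time derivative on the interpolation error, so the mean-value orthogonality $\dual{(I-Q_{\tau,j})v,1}_{\omega_i}=0$ on which \cref{lem:PR} rests is destroyed; the naive bound $\nm{((I-Q_{\tau,j})u)'}_{L^2(0,T;\dot H^1(\Omega))}\lesssim\tau^{1-\alpha/2}\nm{u}_{{}_0H^{2-\alpha/2}(0,T;\dot H^1(\Omega))}$ is then half an order short of $\tau^{(3-\alpha)/2}$, and no mechanism in your sketch recovers the missing $\tau^{1/2}$. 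Second, controlling that bulk term by $\nm{\xi_j}_{C([0,t_j];\dot H^1(\Omega))}=\max_{i\leqslant j}\nm{\xi_j(t_i)}_{\dot H^1(\Omega)}$ requires the values of $\xi_j$ at \emph{interior} nodes $t_i$, $i<j$, which the energy identity does not control (it only yields $\nm{\xi_j(t_j)}_{\dot H^1(\Omega)}$, and $\xi_j$ itself changes with $j$, so the uniformity in $j$ of the statement does not close this loop by itself — that uniformity is needed downstream in the proof of \cref{thm:conv-f-Hs} to recover the $C([0,T];\dot H^1(\Omega))$ norm, not inside this lemma). To repair your argument, replace the time integration by parts by the spatial one above; the rest of your outline then goes through as in the paper.
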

\begin{proof}
    By \cref{eq:regu_pde_strong,eq:algor_sol},
    \[
    \dual{
        \D_{0+}^{\alpha-1} (u-U)', \xi_j'
    }_{\Omega\times(0,t_j)} +
    \dual{\nabla (u-U), \nabla \xi_j'}_{\Omega\times(0,t_j)} = 0,
    \]
    where $ \xi_j = U - Q_{\tau,j} R_h u $.
    As $\xi_j(0)=0$, using integration by
    parts gives
    \begin{align*}
        2\dual{ \nabla \xi_j, \nabla \xi_j'}_{ \Omega\times(0,t_j)} =
        \nm{\xi_j(t_j)}_{\dot H^1(\Omega)}^2,
    \end{align*}
    and a simple calculation then yields
    \[
\begin{split}
{}& \dual{
    \D_{0+}^{\alpha-1} \xi_j', \xi_j'
}_{\Omega\times(0,t_j)} +
\frac12 \nm{\xi_j(t_j)}_{\dot H^1(\Omega)}^2 \\
={}& \dual{ \nabla(u-Q_{\tau,j} R_hu),
    \nabla\xi_j'}_{\Omega\times(0,t_j)}+
\dual{
    \D_{0+}^{\alpha-1}(u-Q_{\tau,j} R_hu)', \xi_j'
}_{\Omega\times(0,t_j)}.
\end{split}
    \]
     It follows from \cref{lem:regu-frac-deriv,lem:coer} that
    \[
    \nm{\xi_j'}_{{}_0H^{(\alpha-1)/2}(0,t_j;L^2(\Omega))}^2 +
    \nm{\xi_j(t_j)}_{\dot H^1(\Omega)}^2 \lesssim E_1 + E_2+E_3,
    \]
    where
    \[
    \begin{split}
                E_1:={}&\dual{
        \D_{0+}^{\alpha-1}(u-Q_{\tau,j} u)', \xi_j'
    }_{\Omega\times(0,t_j)} ,\\
    E_2 := {}&\dual{ \nabla(u-Q_{\tau,j} R_hu),
                \nabla\xi_j'}_{\Omega\times(0,t_j)},\\
                        E_3:={}&\dual{
                            \D_{0+}^{\alpha-1}\big(Q_{\tau,j} (u-R_hu)\big)', \xi_j'
                        }_{\Omega\times(0,t_j)} .
    \end{split}
    \]

   Next, let us estimate $E_1,E_2$ and $E_3$ one by one.
    Since applying \cref{lem:R_tau-est-Hs} indicates
    \[\small
    \nm{(u-Q_{\tau,j} u)'}_{{}_0H^{(\alpha-1)/2}
        (0,T;L^2(\Omega))}\lesssim{}
    \tau^{(3-\alpha)/2}\nm{u}_{{}_0H^{2}(0,T;L^2(\Omega))},
    \]
    by \cref{lem:regu-frac-deriv,lem:coer,thm:regu-pde} we obtain
    \begin{align*}
 E_1 \lesssim{}& \nm{\D_{0+}^{(\alpha-1)/2}(u-Q_{\tau,j}u)'}_{L^2(0,t_j;L^2(\Omega))}\nm{\D_{0+}^{(\alpha-1)/2}\xi_j'}_{L^2(0,t_j;L^2(\Omega))}  \\ \lesssim{}&\nm{(u-Q_{\tau,j}u)'}_{{}_0H^{(\alpha-1)/2}(0,T;L^2(\Omega))}
\nm{\xi'_j}_{{}_0H^{(\alpha-1)/2}(0,t_j;L^2(\Omega))}\\
\lesssim{}&\tau^{(3-\alpha)/2}
\nm{f}_{{}_0H^{2-\alpha}(0,T;L^2(\Omega))}
\nm{\xi'_j}_{{}_0H^{(\alpha-1)/2}(0,t_j;L^2(\Omega))}.
\end{align*}
By \cref{lem:PR} and the definition of $ R_h $,
    \begin{align*}
    E_2 & = \dual{\nabla(u-Q_{\tau,j} u), \nabla\xi_j'}_{\Omega \times (0,t_j)} =
    \dual{(Q_{\tau,j} -I) \Delta u, \xi'_j}_{\Omega \times (0,t_j)} \\
    & \lesssim \tau^{(\alpha-1)/2} \nm{(I-Q_{\tau,j} )u}_{L^{2}(0,T;\dot H^2(\Omega))}
    \nm{\xi_j'}_{{}_0H^{(\alpha-1)/2}(0,t_j;L^2(\Omega))},
    \end{align*}
    so that using \cref{thm:regu-pde,lem:R_tau} gives
    \begin{align*}
    E_2& \lesssim \tau^{(3-\alpha)/2}
    \nm{f}_{{}_0H^{2-\alpha}(0,T;L^2(\Omega))}
    \nm{\xi_j'}_{{}_0H^{(\alpha-1)/2}(0,t_j;L^2(\Omega))}.
    \end{align*}
    If $1<\alpha\leqslant3/2$, then applying \cref{lem:R_tau-est-Hs} indicates
    \[\small
    \begin{split}
{}&\big\|\big(Q_{\tau,j} (u-R_hu)\big)'    \big\|_{{}_0H^{(\alpha-1)/2}(0,T;L^2(\Omega))}\\
    \lesssim    {}&
    \nm{(I-R_h)u}_{{}_0H^{(\alpha+1)/2}(0,T;L^2(\Omega))} \\
    \lesssim{}&h^{3/\alpha-1}
    \nm{u}_{{}_0H^{(\alpha+1)/2}(0,T;\dot H^{3/\alpha-1}(\Omega))},
    \end{split}
    \]
    and if $3/2<\alpha<2$ then, by \cref{lem:R_tau-est-Hs,lem:inv},
        \[\small
    \begin{split}
{}& \big\|\big(Q_{\tau,j} (u-R_hu)\big)'    \big\|_{{}_0H^{(\alpha-1)/2}(0,T;L^2(\Omega))}\\
    \lesssim{}&C_\sigma \tau^{3/2-\alpha}\big\|\big(Q_{\tau,j} (u-R_hu)\big)'    \big\|_{{}_0H^{1-\alpha/2}(0,T;L^2(\Omega))}\\
    \lesssim{}&C_\sigma \tau^{3/2-\alpha}
    \nm{(I-R_h)u}_{{}_0H^{2-\alpha/2}(0,T;L^2(\Omega))} \\
    \lesssim{}&C_\sigma     \tau^{3/2-\alpha}h
    \nm{u}_{{}_0H^{2-\alpha/2}(0,T;\dot H^{1}(\Omega))}.
    \end{split}
    \]
    Therefore, by \cref{lem:interp,lem:regu-frac-deriv,lem:coer,thm:regu-pde} we get
    \begin{align*}
 E_3 \lesssim{}& \Big\|\D_{0+}^{(\alpha-1)/2}\big(Q_{\tau,j} (u-R_hu)\big)'\Big\|_{L^2(0,t_j;L^2(\Omega))}
 \Big\|\D_{0+}^{(\alpha-1)/2}\xi_j'\Big\|_{L^2(0,t_j;L^2(\Omega))}  \\
 \lesssim{}& \big\|\big(Q_{\tau,j} (u-R_hu)\big)'   \big\|_{{}_0H^{(\alpha-1)/2}(0,T;L^2(\Omega))}
    \nm{\xi'_j}_{{}_0H^{(\alpha-1)/2}(0,t_j;L^2(\Omega))}\\
    \lesssim{}&
    \varepsilon_2(\alpha,\tau,h)
    \nm{f}_{{}_0H^{2-\alpha}(0,T;L^2(\Omega))}
    \nm{\xi'_j}_{{}_0H^{(\alpha-1)/2}(0,t_j;L^2(\Omega))}.
    \end{align*}

    Finally, combining the estimates of $ E_1,E_2$ and $ E_3 $ and the Young's inequality with $\epsilon$, we obtain that
    \[
    \begin{split}
    {}&\nm{\xi'_j}_{{}_0H^{(\alpha-1)/2}(0,t_j;L^2(\Omega))} +
    \nm{\xi_j(t_j)}_{\dot H^1(\Omega)}\\
    \lesssim{}&\big( \tau^{(3-\alpha)/2} +  \varepsilon_2(\alpha,\tau,h)  \big)
    \nm{f}_{{}_0H^{2-\alpha}(0,T;L^2(\Omega))},
    \end{split}
    \]
    for all $1\leqslant j\leqslant J$. This proves \cref{eq:conv-theta-f-Hs} and thus concludes the proof.
\end{proof}
\begin{rem}
    Assume that $f\in{}_0H^{2-\alpha}(0,T;L^2(\Omega))$ with $ 3/2 < \alpha < 2 $. By
    \cref{lem:interp,thm:regu-pde} we have
    \[
    u \in{}_0H^{(\alpha+1)/2}(0,T;\dot H^{3/\alpha-1}(\Omega))
    \cap {}_0H^{2-\alpha/2}(0,T;\dot H^1(\Omega)).
    \]
    Therefore, $ (R_hu)' = R_hu' \in {}_0H^{(\alpha-1)/2}(0,T;\dot H^{3/\alpha-1}(\Omega)) $ may not
    make sense since $ 3/\alpha-1 < 1 $, but $ (R_hu)' = R_hu' \in
    {}_0H^{1-\alpha/2}(0,T;\dot H^1(\Omega)) $ makes sense indeed. This is the reason
    why we use
    \[
    \tau^{3/2-\alpha} \nm{(Q_{\tau,j}(u-R_hu))'}_{{}_0H^{1-\alpha/2}(0,T;L^2(\Omega))}
    \]
    to bound
    \[
    \nm{(Q_{\tau,j}(u-R_hu))'}_{{}_0H^{(\alpha-1)/2}(0,T;L^2(\Omega))},
    \]
when estimating the term $E_3$ in the proof of the above lemma.
\end{rem}
Analogously to \cref{lem:conv-theta-f-Hs}, we have the following lemma.
\begin{lem}
    \label{lem:conv-theta-f-L2}
    If $ f \in L^{2}(0,T;L^2(\Omega)) $, then
        \begin{equation*}
    \begin{aligned}
    & \nm{(U-Q_{\tau,j} R_h u)'}_{{}_0H^{(\alpha-1)/2}(0,t_j;L^2(\Omega))} +
    \nm{(U-Q_{\tau,j}R_h u)(t_j)}_{\dot H^1(\Omega)} \\
    \lesssim{} &\big( \tau^{(\alpha-1)/2} +C_\sigma\tau^{-1/2}h\big)
    \nm{f}_{L^{2}(0,T;L^2(\Omega))},
    \end{aligned}
    \end{equation*}
    for all $1\leqslant j\leqslant J$.
\end{lem}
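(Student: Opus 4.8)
The plan is to follow the architecture of the proof of \cref{lem:conv-theta-f-Hs}. Writing $\xi_j := U - Q_{\tau,j}R_hu$, I would first combine the strong form \cref{eq:regu_pde_strong} (which holds here with $\gamma=\beta=0$) and the scheme \cref{eq:algor_sol} to get the Galerkin orthogonality $\dual{\D_{0+}^{\alpha-1}(u-U)',\xi_j'}_{\Omega\times(0,t_j)} + \dual{\nabla(u-U),\nabla\xi_j'}_{\Omega\times(0,t_j)} = 0$. Since $\xi_j(0)=0$, integrating $\dual{\nabla\xi_j,\nabla\xi_j'}$ by parts in time produces $\tfrac12\nm{\xi_j(t_j)}_{\dot H^1(\Omega)}^2$, and then \cref{lem:regu-frac-deriv,lem:coer} bound the left-hand side from below by $\nm{\xi_j'}_{{}_0H^{(\alpha-1)/2}(0,t_j;L^2(\Omega))}^2 + \nm{\xi_j(t_j)}_{\dot H^1(\Omega)}^2$. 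This reduces everything to bounding the same three terms $E_1,E_2,E_3$ as in \cref{lem:conv-theta-f-Hs}, the only change being the weaker regularity of $u$.

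The regularity input comes from \cref{thm:regu-pde} with $f\in L^2(0,T;L^2(\Omega))={}_0H^0(0,T;L^2(\Omega))$, giving $u\in{}_0H^\alpha(0,T;L^2(\Omega))\cap L^2(0,T;\dot H^2(\Omega))$ together with $u\in C([0,T];\dot H^{2-1/\alpha}(\Omega))$; \cref{lem:interp} then supplies the whole scale $u\in{}_0H^{(1-\theta)\alpha}(0,T;\dot H^{2\theta}(\Omega))$, all controlled by $\nm{f}_{L^2(0,T;L^2(\Omega))}$. For $E_1$ I would apply \cref{lem:R_tau-est-Hs} with $\gamma=\alpha$ and $\beta=(\alpha-1)/2$ (admissible since $(\alpha+1)/2\le\alpha\le2$), yielding $\nm{(u-Q_{\tau,j}u)'}_{{}_0H^{(\alpha-1)/2}(0,T;L^2(\Omega))}\lesssim\tau^{(\alpha-1)/2}\nm{u}_{{}_0H^\alpha(0,T;L^2(\Omega))}$; for $E_2$ I would use Ritz orthogonality to replace $Q_{\tau,j}R_hu$ by $Q_{\tau,j}u$, integrate by parts in space, and apply \cref{lem:PR,lem:R_tau}. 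Both terms then contribute the temporal rate $\tau^{(\alpha-1)/2}\nm{f}_{L^2(0,T;L^2(\Omega))}\,\nm{\xi_j'}_{{}_0H^{(\alpha-1)/2}(0,t_j;L^2(\Omega))}$.

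The heart of the proof is $E_3\lesssim\nm{(Q_{\tau,j}(u-R_hu))'}_{{}_0H^{(\alpha-1)/2}(0,T;L^2(\Omega))}\,\nm{\xi_j'}_{{}_0H^{(\alpha-1)/2}(0,t_j;L^2(\Omega))}$, and I expect this to be the main obstacle, requiring a split at $\alpha=3/2$. For $3/2<\alpha<2$ the target is $C_\sigma\tau^{-1/2}h$: here I would use $u\in{}_0H^{\alpha/2}(0,T;\dot H^1(\Omega))$ so that $\nm{Q_{\tau,j}(u-R_hu)}_{{}_0H^{\alpha/2}(0,T;L^2(\Omega))}\lesssim h\nm{f}_{L^2(0,T;L^2(\Omega))}$ (Ritz with $r=1$ and stability of $Q_{\tau,j}$), and then apply the inverse inequality \cref{lem:inv} to the time-discrete function $Q_{\tau,j}(u-R_hu)\in W^0_\tau$ with $\beta=(\alpha-1)/2$ and $\gamma=\alpha/2$, raising the temporal smoothness to $(\alpha+1)/2$ at the price $\tau^{\alpha/2-(\alpha+1)/2}=\tau^{-1/2}$. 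For $1<\alpha\le3/2$ the target $h^{1-1/\alpha}$ is $\tau$-free, and this is the delicate case: the temporal smoothness $(\alpha+1)/2$ needed here corresponds, on the regularity scale above, only to the spatial index $1-1/\alpha<1$, which lies \emph{below} $\dot H^1(\Omega)$, where the Ritz projection is well-behaved. Consequently, unlike in \cref{lem:conv-theta-f-Hs}, one cannot reduce $E_3$ to a Ritz estimate on $u$ itself, and one cannot invoke \cref{lem:inv} without reintroducing a negative power of $\tau$. I would instead exploit the stronger nodal regularity $u\in C([0,T];\dot H^{2-1/\alpha}(\Omega))$ inherited by $Q_{\tau,j}u$, together with a direct localization-and-scaling estimate of $\nm{(I-R_h)(Q_{\tau,j}u)'}_{{}_0H^{(\alpha-1)/2}(0,T;L^2(\Omega))}$ over the subintervals $I_i$ in the spirit of \cref{lem:R_tau-est-Hs}, to extract the purely spatial rate $h^{1-1/\alpha}$.

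Finally, collecting the bounds for $E_1,E_2,E_3$ and absorbing the common factor $\nm{\xi_j'}_{{}_0H^{(\alpha-1)/2}(0,t_j;L^2(\Omega))}$ by Young's inequality with $\epsilon$ yields the stated estimate for all $1\leqslant j\leqslant J$.
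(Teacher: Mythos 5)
Your setup (the error equation for $\xi_j=U-Q_{\tau,j}R_hu$, the coercivity via \cref{lem:regu-frac-deriv,lem:coer}, and the bounds for $E_1$ and $E_2$) is exactly the paper's intended argument, which is simply declared ``analogous to \cref{lem:conv-theta-f-Hs}''. Moreover, your treatment of $E_3$ via \cref{lem:inv} --- bounding $\nm{(Q_{\tau,j}(u-R_hu))'}_{{}_0H^{(\alpha-1)/2}(0,T;L^2(\Omega))}$ by $C_\sigma\tau^{-1/2}\nm{(I-R_h)u}_{{}_0H^{\alpha/2}(0,T;L^2(\Omega))}\lesssim C_\sigma\tau^{-1/2}h\nm{u}_{{}_0H^{\alpha/2}(0,T;\dot H^1(\Omega))}$ --- is valid for \emph{every} $\alpha\in(1,2)$: the constraint $\gamma\leqslant\beta+1$ in \cref{lem:inv} reads $\alpha/2\leqslant(\alpha+1)/2$ and never fails, and $u\in{}_0H^{\alpha/2}(0,T;\dot H^1(\Omega))$ follows from \cref{lem:interp,thm:regu-pde} for all $\alpha$. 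That already yields the stated bound $\tau^{(\alpha-1)/2}+C_\sigma\tau^{-1/2}h$ uniformly in $\alpha$, which is all \cref{lem:conv-theta-f-L2} claims --- note that, unlike $\varepsilon_3(\alpha,\tau,h)$ in \cref{thm:conv-f-L2}, the lemma's right-hand side has \emph{no} case split at $\alpha=3/2$.

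The part of your proposal I would push back on is the ``delicate case'' $1<\alpha\leqslant3/2$, where you set yourself the target $h^{1-1/\alpha}$ free of negative powers of $\tau$. That is not what the lemma asserts, and the paper never proves such a bound at the level of this lemma: the rate $h^{1-1/\alpha}$ in \cref{thm:conv-f-L2} is obtained by a completely different mechanism, namely real interpolation (\cite[Lemma 22.3]{Tartar2007}) between the stability estimate of \cref{thm:stab} for $f\in{}_0H^{(1-\alpha)/2}(0,T;L^2(\Omega))$ and the smooth-data estimate \cref{eq:conv-f-Hs-Hs-L2} for $f\in{}_0H^{2-\alpha}(0,T;L^2(\Omega))$, using $\big(\tau^{(3-\alpha)/2}+h^{3/\alpha-1}\big)^{(\alpha-1)/(3-\alpha)}<\tau^{(\alpha-1)/2}+h^{1-1/\alpha}$; \cref{lem:conv-theta-f-L2} is invoked only for $3/2<\alpha<2$. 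Your sketched alternative --- extracting $h^{1-1/\alpha}$ from the nodal regularity $u\in C([0,T];\dot H^{2-1/\alpha}(\Omega))$ by a localization-and-scaling argument --- is not substantiated and faces a real obstruction: a $\tau$-free control of $\nm{(Q_{\tau,j}(u-R_hu))'}_{{}_0H^{(\alpha-1)/2}(0,T;L^2(\Omega))}$ would require a Ritz error estimate of order $h^{1-1/\alpha}$ in $L^2(\Omega)$ measured against $\dot H^{1-1/\alpha}(\Omega)$ with $1-1/\alpha<1$, below the range $1\leqslant r\leqslant2$ where $R_h$ is controlled, while the nodal differences hidden in $(Q_{\tau,j}\cdot)'$ reintroduce factors of $\tau^{-1}$ that only the inverse-estimate route compensates. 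Fortunately none of this is needed: drop the case split, run your $3/2<\alpha<2$ argument for all $\alpha$, and the proof is complete and coincides with the paper's.
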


\medskip\noindent{\bf Proof of \cref{thm:conv-f-Hs}.}
By \cref{lem:interp,thm:regu-pde}, we have
\[
\small
\begin{split}
{}&\nm{(I-Q_\tau)R_hu}_{C([0,T];\dot H^1(\Omega))}
\lesssim\tau^{(3-\alpha)/2} \nm{f}_{{}_0H^{2-\alpha}(0,T;L^2(\Omega))},\\
{}&\nm{(I-R_h)u}_{C([0,T];\dot H^1(\Omega))}
\lesssim
\left\{
\begin{aligned}
&h\nm{f}_{{}_0H^{2-\alpha}(0,T;L^2(\Omega))}&&{\mathrm {if}~} 1<\alpha<3/2,\\
&
    \frac{h^{1-\epsilon}}{
\epsilon}
\nm{f}_{{}_0H^{1/2}(0,T;L^2(\Omega))}&&{\mathrm {if}~} \alpha=3/2,\\
&h^{3/\alpha-1}\nm{f}_{{}_0H^{2-\alpha}(0,T;L^2(\Omega))}&&{\mathrm {if}~} 3/2<\alpha<2,
\end{aligned}
\right.
\end{split}
\]
where $0<\epsilon\leqslant1/2$. Therefore, if $\alpha=3/2$, then letting $\epsilon = 1/(2+\snm{\log h})$ yields
\[
\nm{(I-R_h)u}_{C([0,T];\dot H^1(\Omega))}
\lesssim \big(1+\snm{\log h}\big)h\nm{f}_{{}_0H^{1/2}(0,T;L^2(\Omega))}.
\]
Since $(Q_{\tau,j}R_h u)(t_j) = (Q_{\tau}R_h u)(t_j)$ for all $1\leqslant j\leqslant J$, we obtain
\[
\small
\begin{split}
{}&\nm{Q_\tau R_hu - U}_{C([0,T];\dot H^1(\Omega))}
={}
\max_{1 \leqslant j \leqslant J}
\nm{(Q_{\tau}R_hu - U)(t_j)}_{\dot H^1(\Omega)}\\
={}&
\max_{1\leqslant j \leqslant J}
\nm{(Q_{\tau,j}R_hu - U)(t_j)}_{\dot H^1(\Omega)}\lesssim \big( \tau^{(3-\alpha)/2} +   \varepsilon_2(\alpha,\tau,h)  \big)
\nm{f}_{{}_0H^{2-\alpha}(0,T;L^2(\Omega))},
\end{split}
\]
by \cref{lem:conv-theta-f-Hs}, and hence,
\begin{align}
{}&\nm{R_hu - U}_{C([0,T];\dot H^1(\Omega))} \notag\\
\leqslant {}&
\nm{(I-Q_\tau)R_hu}_{C([0,T];\dot H^1(\Omega))} +
\nm{Q_\tau R_hu - U}_{C([0,T];\dot H^1(\Omega))} \notag\\
\lesssim {}&
\big(\tau^{(3-\alpha)/2}    +\varepsilon_2(\alpha,\tau,h)\big)\nm{f}_{{}_0H^{2-\alpha}(0,T;L^2(\Omega))}.\label{eq:Rh-U}
\end{align}
Therefore, using the triangle inequality
\begin{align*}\small
    \nm{u-U}_{C([0,T];\dot H^1(\Omega))} \leqslant
    \nm{u-R_hu}_{C([0,T];\dot H^1(\Omega))} \!\!+\!
    \nm{R_hu - U}_{C([0,T];\dot H^1(\Omega))}
\end{align*}
proves \cref{eq:conv-f-Hs-linf-H1}. Since the proof of \cref{lem:conv-theta-f-Hs} yields that
\[\small
\begin{split}
{}&\big\|\big(u-Q_{\tau,j}R_hu)\big)'    \big\|_{{}_0H^{(\alpha-1)/2}(0,T;L^2(\Omega))}\\
\leqslant{}&    \nm{(u-Q_{\tau,j} u)'}_{{}_0H^{(\alpha-1)/2}
    (0,T;L^2(\Omega))}+\big\|\big(Q_{\tau,j} (u-R_hu)\big)'    \big\|_{{}_0H^{(\alpha-1)/2}(0,T;L^2(\Omega))}\\
\lesssim{}& \big( \tau^{(3-\alpha)/2} +     \varepsilon_2(\alpha,\tau,h)  \big)
\nm{f}_{{}_0H^{2-\alpha}(0,T;L^2(\Omega))},
\end{split}
\]
 \cref{eq:conv-f-Hs-Hs-L2} follows form the above inequality and \cref{lem:conv-theta-f-Hs}. This concludes the proof of the theorem.
\hfill\ensuremath{\blacksquare}

\medskip\noindent{\bf Proof of \cref{thm:conv-f-L2}.} In view of \cref{lem:conv-theta-f-L2}, the proof of the case $3/2<\alpha<2$ is completely analogous to that of \cref{thm:conv-f-Hs}. Therefore, we only give the proof for $1<\alpha\leqslant3/2$ using the theory of interpolation space.
As \cref{thm:regu-pde,thm:stab}
imply
\[
\nm{(u-U)'}_{{}_0H^{(\alpha-1)/2}(0,T;L^2(\Omega))}\lesssim
\nm{f}_{{}_0H^{(1-\alpha)/2}(0,T;L^2(\Omega))},
\]
by \cref{eq:conv-f-Hs-Hs-L2} and the fact
\[\small
\begin{split}
L^2(0,T;L^2(\Omega)) =
\big[
{}_0H^{(1-\alpha)/2}(0,T;L^2(\Omega)),
{}_0H^{2-\alpha}(0,T;L^2(\Omega))
\big]_{(\alpha-1)/(3-\alpha),2},
\end{split}
\]
applying \cite[Lemma 22.3]{Tartar2007} yields
\[
\small
\begin{split}
\nm{(u\!-\!U)'}_{{}_0H^{(\alpha-1)/2}(0,T;L^2(\Omega))}\!\lesssim\!
\big(\tau^{(3-\alpha)/2}\!+\!h^{3/\alpha-1}\big)^{(\alpha-1)/(3-\alpha)}
\!\!\,\nm{f}_{L^{2}(0,T;L^2(\Omega))}.
\end{split}
\]
Hence, from the inequality
\[
\big(
\tau^{(3-\alpha)/2}+h^{3/\alpha-1}
\big)^{(\alpha-1)/(3-\alpha)} <
\tau^{(\alpha-1)/2}+h^{1-1/\alpha},
\]
it follows that
\begin{equation}\label{eq:conv-f-L2-Hs-L2}
    \nm{(u-U)'}_{{}_0H^{(\alpha-1)/2}(0,T;L^2(\Omega))}
    \lesssim
    \big(\tau^{(\alpha-1)/2}+h^{1-1/\alpha} \big)
    \nm{f}_{L^{2}(0,T;L^2(\Omega))}.
\end{equation}

In addition, \cref{thm:regu-pde,thm:stab} imply
\[
\nm{R_hu-U}_{C([0,T];\dot H^1(\Omega))}
\lesssim
\nm{f}_{{}_0H^{(1-\alpha)/2}(0,T;L^2(\Omega))},
\]
and then, by this estimate and \cref{eq:Rh-U}, proceeding as in the proof of
\cref{eq:conv-f-L2-Hs-L2} yields
\begin{equation*}
    \nm{R_hu-U}_{C([0,T];\dot H^1(\Omega))}
    \lesssim
    \big(\tau^{(\alpha-1)/2}+h^{1-1/\alpha} \big)
    \nm{f}_{L^{2}(0,T;L^2(\Omega))}.
\end{equation*}
Therefore, since \cref{thm:regu-pde} gives
\[
\nm{u-R_hu}_{C([0,T];\dot H^1(\Omega))}
\lesssim h^{1-1/\alpha}
\nm{f}_{L^{2}(0,T;L^2(\Omega))},
\]
we obtain
\begin{align}
    \nm{u-U}_{C([0,T];\dot H^1(\Omega))}
    & \leqslant
    \nm{u-R_hu}_{C([0,T];\dot H^1(\Omega))}+
    \nm{R_hu-U}_{C([0,T];\dot H^1(\Omega))} \notag \\
    & \lesssim
    \big(\tau^{(\alpha-1)/2}+h^{1-1/\alpha} \big)
    \nm{f}_{L^{2}(0,T;L^2(\Omega))}. \label{eq:conv-H1-inf}
\end{align}

Finally, combining \cref{eq:conv-f-L2-Hs-L2,eq:conv-H1-inf} proves
\cref{eq:conv-f-L2} and thus concludes the proof of this theorem.
\hfill\ensuremath{\blacksquare}
\section{Numerical Experiments}
\label{sec:numer}
In this section, we present some numerical examples to validate
\cref{thm:conv-f-Hs,thm:conv-f-L2} in one dimensional case. We set $ \Omega := (0,1)
$, $T:=1 $, and use the uniform temporal and spatial grids. Define
\begin{align*}
  \mathcal E_1(U):={}&
\nm{\tilde u-U}_{C([0,T];\dot H^1(\Omega))},\\
  \mathcal E_2(U):={}&{}\,\big\|
    \D_{0+}^{(\alpha-1)/2}(\tilde u-U)'
  \big\|_{L^2(0,T;L^2(\Omega))},
\end{align*}
where $ \tilde u $ is the numerical solution with $ \tau=2^{-17} $ and $ h = 2^{-10}
$. Here we observe that \cref{lem:regu-frac-deriv} implies
\[
  \nm{(\tilde u-U)'}_{{}_0H^{(\alpha-1)/2}(0,T;L^2(\Omega))}
  \sim \big\|
    \D_{0+}^{(\alpha-1)/2} \left(\tilde u-U\right)'
  \big\|_{L^{2}(0,T;L^2(\Omega))}.
\]
It is easy to see that \cref{eq:algor_sol} yields a block triangular Toeplitz-like
with tri-diagonal block system, so that we can apply a fast direct $\mathcal
O(h^{-1}J(\log J)^2)$ solver based on the divide-and-conquer strategy \cite{Ke2015}
to solve this system efficiently.
Additionally, the calculation of $\mathcal E_2(U)$
involves only the matrix-vector multiplication of a block triangular Toeplitz-like matrix, which can be completed within computational cost of $\mathcal O(h^{-1}J\log J)$ by fast Fourier transform.

\vskip0.2cm\noindent{\bf Example 1.}
This example adopts
\[
f(x,t) = t^{-0.49}x^{-0.49},
\quad (x,t)\in\Omega\times (0,T).
\]
The relationship between the spatial errors
and the spatial step sizes are displayed in \cref{fig:ex2-space}.
These numerical results indicate that
\[
\mathcal E_1(U)
\approx\mathcal O\big(h^{1-1/\alpha}\big),\quad
\mathcal E_2(U)
\approx\mathcal O\big(h^{1-1/\alpha}\big).
\]
The relationship between the errors and the
temporal step sizes are plotted in
\cref{fig:ex2-temp}, which demonstrate that
\[
\mathcal E_1(U)
\approx\mathcal O\big(\tau^{(\alpha-1)/2}\big),\quad
\mathcal E_2(U)
\approx\mathcal O\big(\tau^{(\alpha-1)/2}\big).
\]
Therefore, if $1<\alpha\leqslant3/2$ then numerical results coincide well with \cref{thm:conv-f-L2}. However, for $3/2<\alpha<2$,
numerical results also show the optimal accuracy of $\mathcal E_1(U)$ and $\mathcal E_2(U)$ with respect to the regularity, without the restriction that $h\leqslant C\tau^{\alpha/2}$.
\begin{figure}[H]
    \centering
    \includegraphics[width=350pt]{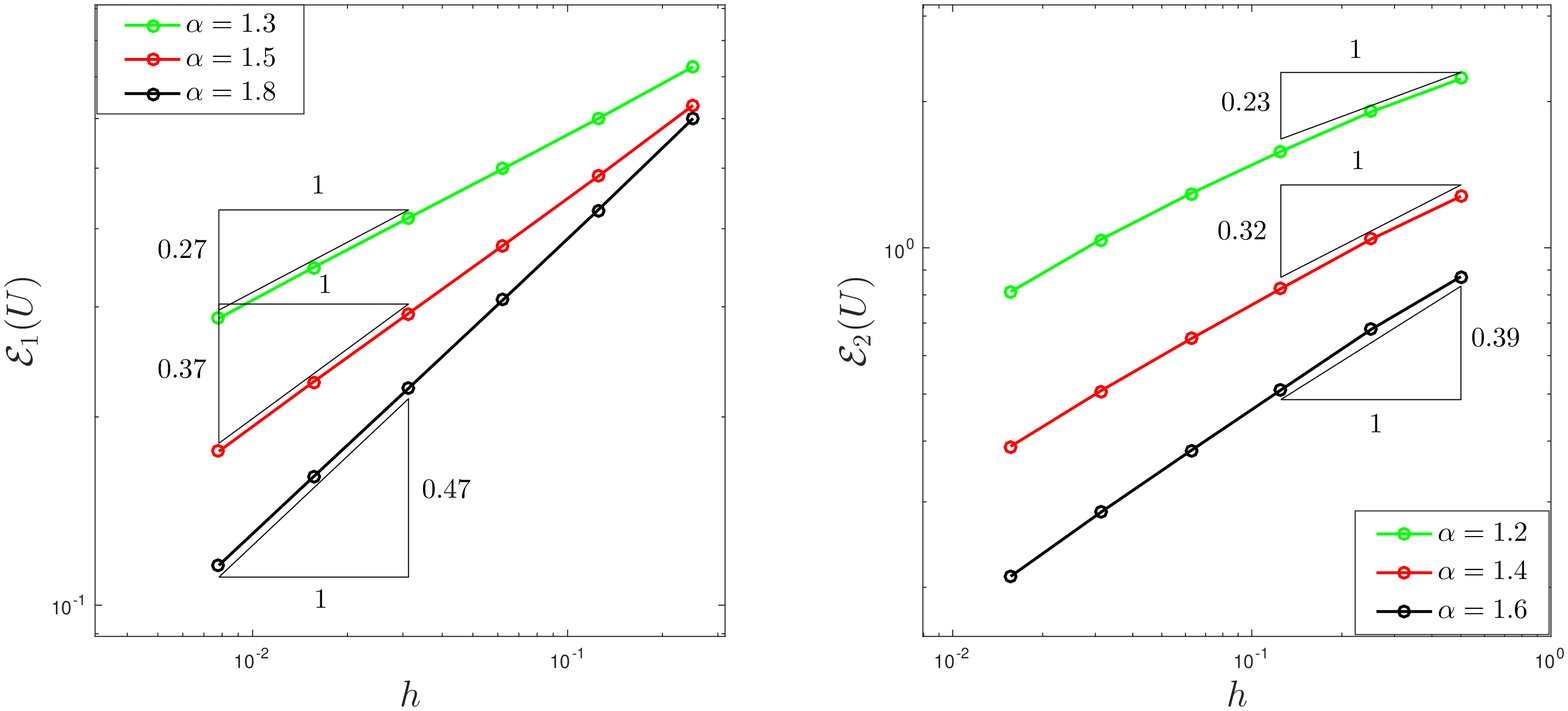}
    \caption{Spatial errors of Example 1,~$\tau=2^{-17}$.}
    \label{fig:ex2-space}
\end{figure}
\begin{figure}[H]
    \centering
    \includegraphics[width=350pt]{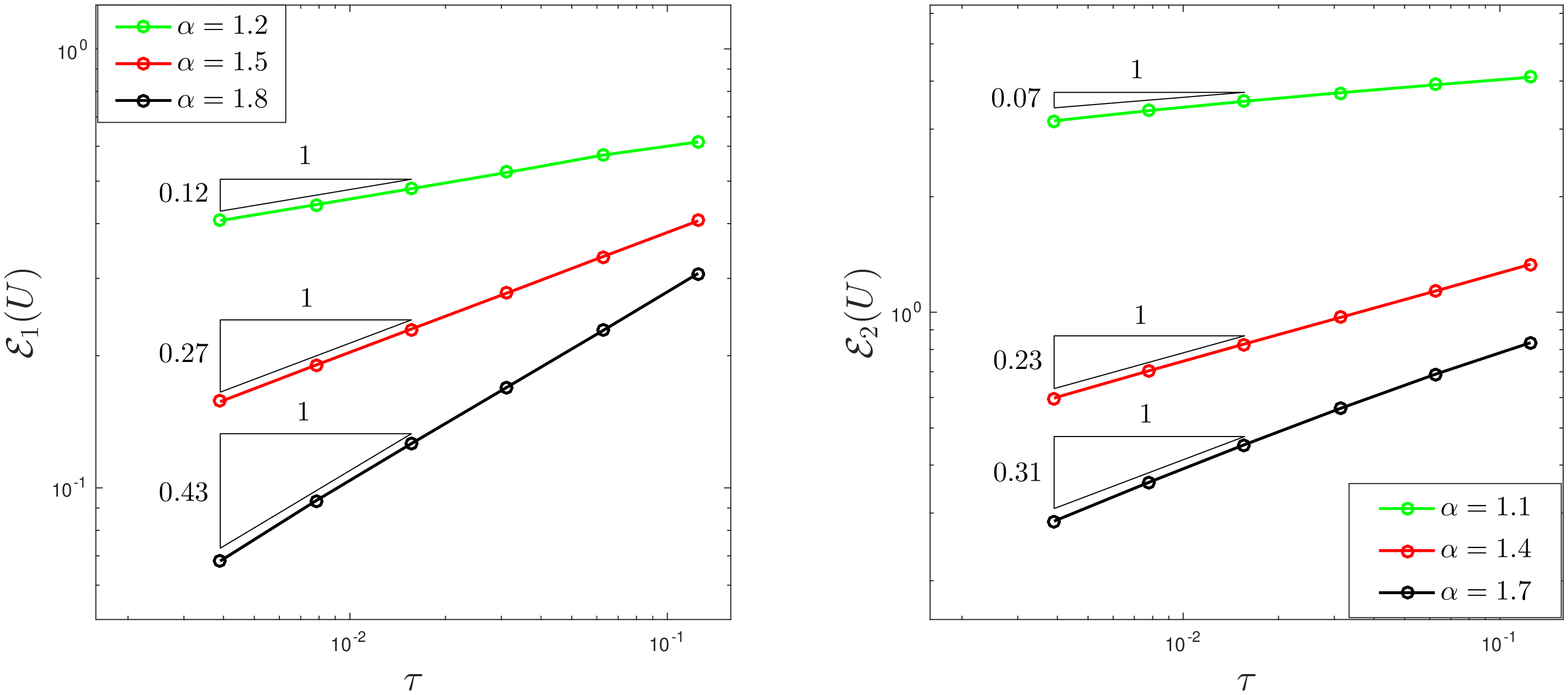}
    \caption{Temporal errors of Example 1,~$h=2^{-10}$.}
    \label{fig:ex2-temp}
\end{figure}

\vskip0.2cm\noindent{\bf Example 2.} This example employs
\[
  f(x,t) = t^{1.51-\alpha}x^{-0.49},
  \quad (x,t)\in\Omega\times (0,T),
\]
and the spatial errors and temporal errors are plotted in
\cref{fig:ex1-space,fig:ex1-temp}, respectively. These numerical results demonstrate
that
\begin{align*}
  \mathcal E_1(U)\approx {}&
\left\{
\begin{aligned}
&  \mathcal O\big(\tau^{(3-\alpha)/2}+h\big)&&{\rm~if~} 1<\alpha\leqslant 3/2,\\
&  \mathcal O\big(\tau^{(3-\alpha)/2}+h^{3/\alpha-1}\big)&&{\rm~if~} 3/2<\alpha<2,
\end{aligned}
\right.
\\
  \mathcal E_2(U)\approx {}&
\mathcal O\big(\tau^{(3-\alpha)/2}+h^{3/\alpha-1}\big).
\end{align*}
Hence, if $1<\alpha\leqslant3/2$, then numerical results verify the theoretical
predictions of \cref{thm:conv-f-Hs}. But for $3/2<\alpha<2$, numerical results also indicate that the convergence rates of $\mathcal E_1(U)$ and $\mathcal E_2(U)$ are optimal with respect to the regularity, without the requirement $h\leqslant C\tau^{\alpha/2}$.
\begin{figure}[H]
  \centering
  \includegraphics[width=350pt]{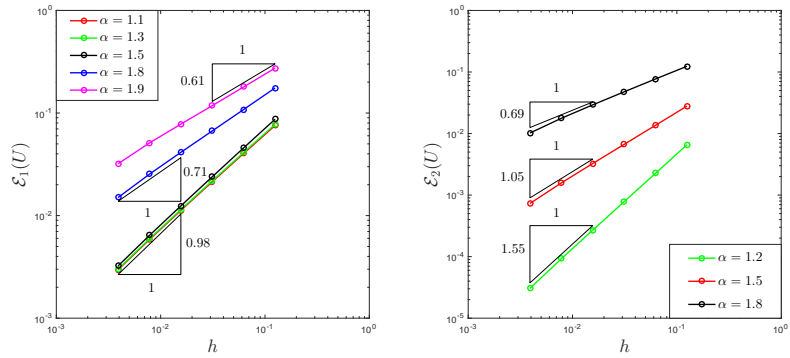}
  \caption{Spatial errors of Example 2,~$\tau=2^{-17}$.}
  \label{fig:ex1-space}
\end{figure}
\begin{figure}[H]
  \centering
  \includegraphics[width=350pt]{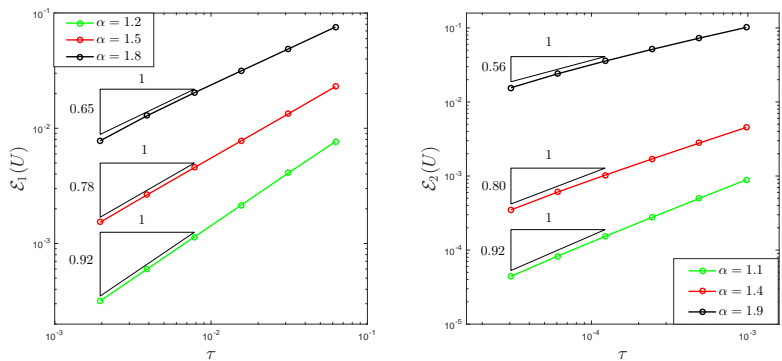}
  \caption{Temporal errors of Example 2,~$h=2^{-10}$.}
  \label{fig:ex1-temp}
\end{figure}
\section{Concluding remarks}
\label{sec:con}
This paper concerns the convergence of a Petrov-Galerkin method for fractional wave
problems with nonsmooth data. The weak solution and its regularity are studied by the
variational approach. Optimal error estimate with respect to the regularity of the
solution under the norm $ C([0,T];\dot H^1(\Omega)) $ is derived if $ f \in
L^2(0,T;L^2(\Omega)) $ and $ 1 < \alpha \leqslant 3/2 $, and numerical results
validate this theoretical result. For $ 3/2 < \alpha < 2 $, similar optimal error
estimate is also derived under the restriction that the temporal grid is quasi-uniform
and $ h \leqslant C \tau^{\alpha/2} $; however, numerical results demonstrate that the
restriction $ h \leqslant C \tau^{\alpha/2} $ is unnecessary.

In addition, optimal error estimates with respect to the regularity of the solution
or the degrees of polynomials used in the discretization under the norm $
C([0,T];L^2(\Omega)) $ have not been established, and this is our ongoing work.
\bibliographystyle{plain}

\begin{thebibliography}{10}

  \bibitem{Lunardi2018}
  A.~Lunardi.
  \newblock {\em Interpolation Theory}.
  \newblock Edizioni della Normale, Pisa, 2018.


\bibitem{Quarteroni1994Numerical}
A.~Quarteroni and A.~Valli.
\newblock {\em Numerical Approximation of Partial Differential Equations}.
\newblock Springer, Berlin, 1994.


\bibitem{Kilbas2006}
A.~Kilbas, H.~Srivastava, and J.~Trujillo.
\newblock {\em Theory and Applications of Fractional Differential Equations}.
\newblock Elsevier Science, 2006.

\bibitem{Jin2016}
B.~Jin, R.~Lazarov, and Z.~Zhou.
\newblock Two fully discrete schemes for fractional diffusion and
diffusion-wave equations with nonsmooth data.
\newblock {\em SIAM J. Sci Comput.}, 38(1):A146--A170, 2016.

\bibitem{Li2018A}
B.~Li, H.~Luo, and X.~Xie.
\newblock Analysis of a time-stepping scheme for time fractional diffusion
problems with nonsmooth data.
\newblock {\em SIAM J. Numer. Anal.}, accepted, 2019.

\bibitem{Li2018}
B.~Li, H.~Luo, and X.~Xie.
\newblock A time-spectral algorithm for fractional wave problems.
\newblock {\em J. Sci. Comput.}, 77(2):1164--1184, 2018.

\bibitem{Li2017As}
B.~Li, H.~Luo, and X.~Xie.
\newblock A space-time finite element method for fractional wave problems.
\newblock {\em submitted}, arXiv:1803.03437, 2018.

\bibitem{Li2018Error}
B.~Li, H.~Luo, and X.~Xie.
\newblock Error estimates of a discontinuous Galerkin method for time
fractional diffusion problems with nonsmooth data.
\newblock {\em submitted}, arXiv:1809.02015, 2018.

            \bibitem{Lubich1996}
C.~Lubich, I.~Sloan and V.~Thom{\'{e}}e.
\newblock Nonsmooth data error estimates for approximations of an evolution equation with a positive-type memory term.
\newblock {\em Math. Comp.}, 65(213):1--17, 1996.

\bibitem{Cuesta2006}
E.~Cuesta, C.~Lubich and C.~Palencia.
\newblock Convolution quadrature time discretization of fractional
diffusion-wave equations.
\newblock {\em Math. Comp.}, 75:673--696, 2006.

\bibitem{Bazhlekova1996}
E.~Bazhlekova.
\newblock Duhamel-type representation of the solutions of non-local boundary
value problems for the fractional diffusion-wave equation.
\newblock {\em {\rm In}: Proc. of the 2nd Int. Workshop, {\rm Bulgarian Academy of Sciences, Sofia}}, pages 32--40, 1998.

    \bibitem{Bazhlekova2001}
    E.~Bazhlekova.
    \newblock {\em Fractional Evolution Equations in Banach Spaces}.
    \newblock PhD thesis, Eindhoven University of Technology, 2001.


    \bibitem{Mainardi1996}
    F.~Mainardi.
    \newblock Fractional relaxation-oscillation and fractional diffusion-wave
    phenomena.
    \newblock {\em Chaos Soliton. Fract.}, 7(9):1461--1477, 1996.


    \bibitem{Mainardi2001}
    F.~Mainardi.
    \newblock Fractional diffusive waves.
    \newblock {\em J. Comput. Acoust.}, 9(4):1417--1436, 2001.


  \bibitem{Babuska1971}
I.~Babu\v{s}ka.
\newblock Error-bounds for finite element method.
\newblock {\em Numer. Math.}, 16(4):322--333, 1971.


\bibitem{Podlubny1998}
I.~Podlubny.
\newblock {\em Fractional Differential Equations}.
\newblock Academic Press, 1998.

\bibitem{Lions-I}
J.~Lions and E.~Magenes.
\newblock {\em Non-Homogeneous Boundary Value Problems
    and Applications Vol. 1}.
\newblock Springer, Berlin, 1972.

  \bibitem{Mustapha2012Superconvergence}
K.~Mustapha and W.~McLean.
\newblock Superconvergence of a discontinuous Galerkin method for fractional
diffusion and wave equations.
\newblock {\em SIAM J. Numer. Anal.}, 51(1):491--515, 2012.


\bibitem{Sakamoto2011}
K.~Sakamoto and Y.~Yamamoto.
\newblock Initial value or boundary value problems for fractional diffusion wave equations and applications to some inverse problems.
\newblock {\em J. Math. Anal. Appl.}, 382(1):426--447, 2011.

  \bibitem{Oldham1974}
  K.~Oldham and J.~Spanier.
  \newblock {\em The Fractional Calculus: Theory and Applications of
  Differentiation and Integration to Arbitrary Order}.
  \newblock Academic Press, New York, 1974.

\bibitem{Tartar2007}
L.~Tartar.
\newblock {\em An Introduction to Sobolev Spaces and Interpolation Spaces}.
\newblock Springer, Berlin, 2007.



\bibitem{Agranovich2015}
M.~Agranovich.
\newblock {\em Sobolev Spaces, Their Generalizations and Elliptic Problems in
    Smooth and Lipschitz Domains}.
\newblock Springer, Cham, 2015.

\bibitem{Yamamoto2018}
M.~Yamamoto.
\newblock Weak solutions to non-homogeneous boundary value problems for
time-fractional diffusion equations.
\newblock {\em J. Math. Anal. Appl.}, 460(1):365--381, 2018.

\bibitem{Clement1975}
P. ~Cl\'ement.
\newblock Approximation by finite element functions using local regularization.
\newblock {\em RAIRO, Anal. Num.}, 9:77--84, 1975.


\bibitem{Gorenflo1999Operator}
R.~Gorenflo and M.~Yamamoto.
\newblock Operator theoretic treatment of linear Abel integral equations of
first kind.
\newblock {\em Jpn. J. Ind. Appl. Math.}, 16(1):137--161, 1999.

\bibitem{Ke2015}
R.~Ke, M.~Ng, and H.~Sun.
\newblock A fast direct method for block triangular Toeplitz-like with
tri-diagonal block systems from time-fractional partial differential
equations.
\newblock {\em J. Comput. Phys.}, 303:203--211, 2015.


  \bibitem{Samko1993}
S.~Samko, A.~Kilbas, and O.~Marichev.
\newblock {\em Fractional Integrals and Derivatives:Theory and Applications}.
\newblock USA: Gordon and Breach Science Publishers, 1993.

  \bibitem{Dupont1980Polynomial}
  T.~Dupont and R.~Scott.
  \newblock Polynomial approximation of functions in Sobolev spaces.
  \newblock {\em Math. Comp.}, 34(150):441--463, 1980.

\bibitem{Ervin2006}
V.~Ervin and J.~Roop.
\newblock Variational formulation for the stationary fractional advection dispersion equation.
\newblock {\em Numer. Meth. Part. D. E.}, 22(3):558--576, 2006.

  \bibitem{Hackbusch1992}
  W.~Hackbusch.
  \newblock {\em Elliptic Differential Equations: Theory and Numerical
  Treatment}.
  \newblock Springer, Berlin, 1992.

\bibitem{Luchko2011}
Y.~Luchko.
\newblock {Modeling anomalous heat transport in geothermal reservoirs via
    fractional diffusion equations}.
\newblock {\em Inter. J. Geomath.}, 1:257--276, 2011.

\bibitem{Gorenflo2015}
Y.~Luchko R.~Gorenflo and M.~Yamamoto.
\newblock Time-fractional diffusion equation in the fractional Sobolev spaces.
\newblock {\em Fract. Calc. Appl. Anal.}, 18(3):799--820, 2015.

  \bibitem{Sun2006}
  Z.~Sun and X.~Wu.
  \newblock A fully discrete difference scheme for a diffusion-wave system.
  \newblock {\em Appl. Numer. Math.}, 56:193--209, 2006.




\end{thebibliography}

\end{document}